\begin{document}

\title{Solving $xz=y^2$ in certain subsets of finite groups}

\author{\tsname}
\address{\tsaddress}
\email{\tsemail}

\begin{abstract}
Suppose that $G$ is a finite group and $A \subset G$ has no non-trivial solutions to $xz=y^2$.  We shall show that $|A|=|G|/(\log \log |G|)^{\Omega(1)}$.
\end{abstract}

\maketitle

\section{Introduction}\label{sec.introduction}

Suppose that $G$ is a group and $A \subset G$.  We say that $A$ contains a solution to $xz=y^2$ if there is a triple $(x,y,z) \in A^3$ such that $xz=y^2$; we say the solution is non-trivial if $x \neq y$.  This paper is concerned with the following result of Bergelson, McCutcheon and Zhang.
\begin{theorem}[{\cite[Corollary 6.5]{bermcczha::}}]\label{thm.ksv}
Suppose that $G$ is a finite group and $A \subset G$ contains no non-trivial solutions to $xz=y^2$.  Then $|A|=o(|G|)$.
\end{theorem}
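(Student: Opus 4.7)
The plan is to recognise the equation $xz=y^2$ as a ``corners'' condition for two commuting actions of $G$ on itself, and then to invoke a Furstenberg--Katznelson multiple recurrence theorem. Setting $a:=y^{-1}x$, any solution $(x,y,z)\in A^3$ to $xz=y^2$ takes the form $(ya,\,y,\,a^{-1}y)$, with non-triviality $x\neq y$ equivalent to $a\neq e$. Writing $\tau_a(y):=ya$ and $\sigma_a(y):=a^{-1}y$ for the two commuting right/left actions of $G$ on itself, the no-solutions hypothesis becomes $\mu(A\cap\tau_a^{-1}A\cap\sigma_a^{-1}A)=0$ for every $a\neq e$, where $\mu$ is the uniform probability measure on $G$.

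I would then argue by contradiction, assuming a sequence of finite groups $G_n$ and subsets $A_n\subseteq G_n$ of density at least $\delta>0$, none of them containing a non-trivial solution. A non-abelian Furstenberg correspondence principle (realised, for instance, by equipping the ultraproduct of the $G_n$ with Loeb measure) then produces a probability space $(X,\mu)$, two commuting measure-preserving actions $\tau,\sigma$ of a hyperfinite amenable group $\mathbf{G}$ on $(X,\mu)$, and a set $\mathbf{A}\subset X$ with $\mu(\mathbf{A})\geq\delta$, such that the no-solutions condition transfers to $\mu(\mathbf{A}\cap\tau_a^{-1}\mathbf{A}\cap\sigma_a^{-1}\mathbf{A})=0$ for Loeb-almost every $a\in\mathbf{G}$.

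The Furstenberg--Katznelson recurrence theorem for two commuting measure-preserving actions of an amenable group now supplies the contradiction, since for any $\mathbf{A}$ of positive measure,
\[
\liminf_n \frac{1}{|F_n|}\sum_{a\in F_n}\mu\bigl(\mathbf{A}\cap\tau_a^{-1}\mathbf{A}\cap\sigma_a^{-1}\mathbf{A}\bigr)>0
\]
along any F\o lner sequence $(F_n)$ in $\mathbf{G}$, whereas the previous display forces this liminf to be zero upon integration.

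The main obstacle is the correspondence step itself: in its classical form the Furstenberg correspondence handles a single fixed acting group, whereas here the acting groups $G_n$ vary with $n$, so one must work with a nonstandard object and verify that left and right translations descend to genuinely commuting measure-preserving maps on a Loeb probability space, and that F\o lner averaging along the internal sets $G_n$ captures the genuine ergodic averages in Furstenberg--Katznelson. Once this scaffolding is assembled the recurrence theorem delivers the conclusion at once, although all quantitative information on the $o(|G|)$ rate is lost along the way.
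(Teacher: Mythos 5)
Your reduction of $xz=y^2$ to the corner configuration $y,\,ya,\,a^{-1}y\in A$ is correct, and the ergodic route you sketch is sound in outline --- indeed it is essentially how the cited source (Bergelson--McCutcheon--Zhang) obtains this statement, via their Roth theorem for two commuting measure-preserving actions of an amenable group. But it is a genuinely different argument from the one in this paper: here Theorem \ref{thm.ksv} is deduced from the quantitative Theorem \ref{thm.main}, which is proved by a Roth/Bourgain-style density increment over multiplicative systems (approximate subgroup towers built from the Croot--Sisask lemma), together with the observation that a repeated square in $A$ already gives a non-trivial solution. The trade-off is exactly the one you concede at the end: your argument, passing through a correspondence principle and an infinitary recurrence theorem, yields only $o(|G|)$ with no rate, whereas the paper's machinery gives $|A|\le |G|/(\log\log|G|)^{\Omega(1)}$.

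One warning about the step you flag as the main obstacle. The ultraproduct $\mathbf{G}=\prod_{\mathcal{U}}G_n$ is not, as a discrete group, amenable in general (ultraproducts of finite groups typically contain free subgroups), so you cannot pick a F\o lner sequence in $\mathbf{G}$ and quote the Furstenberg--Katznelson/BMZ theorem for countable amenable groups verbatim, nor restrict to a countable subgroup without losing amenability. What does work is either (i) averaging over the internal sets $G_n$ themselves and proving, or citing, a double recurrence theorem valid in the Loeb-measure setting, or (ii) bypassing the limit object altogether by using a uniform, finitary form of the recurrence theorem in which each finite $G_n$ serves as its own perfectly invariant F\o lner set --- which is in effect how Corollary 6.5 of the cited paper is derived. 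A small cosmetic point: as defined, $\tau$ and $\sigma$ are anti-actions ($\tau_{ab}=\tau_b\tau_a$); this is harmless (replace $a$ by $a^{-1}$, or regard them as actions of $G^{\mathrm{op}}$), but it should be said when invoking a theorem stated for genuine actions.
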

Note that if $G$ is Abelian then $xz=y^2$ and $x \neq y$ if and only if $(x,y,z)=(x,x+d,x+2d)$ for some $d\neq 0$ \emph{i.e.} if any only if $(x,y,z)$ is a non-trivial three-term arithmetic progression.  With this observation in hand it is a short argument to get Roth's celebrated theorem on three-term arithmetic progressions \cite{rot::,rot::0} from Theorem \ref{thm.ksv} in the special case when $G$ is a cyclic group.  This should should give some idea of why Theorem \ref{thm.ksv} might be of interest.

Theorem \ref{thm.ksv} was proved in the case when $G$ is Abelian (and of odd order) in \cite[Theorem 1]{fragrarod::}, in which paper the authors attribute the result to \cite{brobuh::0}.  The odd order condition here is a technical convenience which can be ignored on a first reading.  Frankl, Graham, and R{\"o}dl's proof goes by the triangle removal lemma of Ruzsa and Szemer{\'e}di \cite{ruzsze::}, and it was noted by Kr{\'a}l, Serra, and Vena, in the course of a wider generalisation, that the removal lemma can also be used to prove Theorem \ref{thm.ksv} \cite[Corollary 3]{kraserven::}.  

The removal lemma suffers from notorious poor dependencies and the reader is directed to \cite{confox::} for a discussion of this and related matters.  For our purposes it suffices to know that the best are due to Fox \cite{fox::} and inserting his work into the proof of \cite[Corollary 3]{kraserven::} would give
\begin{equation}\label{eqn.b}
|A|=\frac{|G|}{\exp(\Omega(\log_* |G|))}
\end{equation}
when $A$ is a set satisfying the hypotheses of Theorem \ref{thm.ksv} (and $G$ is a group of odd order).  Here for $R \in \N$ we define $\log_*R$ to be the minimal $n \in \N_0$ such that
\begin{equation*}
\overbrace{\log_2(\log_2 \dots (\log_2}^{n \text{ times}} R )) \leq 1.
\end{equation*}
This function grows more slowly than any finite composition of logarithms.  It is the purpose of this paper to improve on the bound (\ref{eqn.b}) by showing the following.
\begin{theorem}\label{thm.mainintro}
Suppose that $G$ is a finite group and $A \subset G$ contains no non-trivial solutions to $xz=y^2$.  Then
\begin{equation*}
|A|=\frac{|G|}{(\log \log |G|)^{\Omega(1)}}.
\end{equation*}
\end{theorem}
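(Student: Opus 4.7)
The plan is to set up a non-abelian Fourier-analytic density increment along the lines of Bourgain's argument for Roth in the abelian setting. Write $f := 1_A$, $\delta := |A|/|G|$, and let $N(A)$ denote the number of triples $(x,y,z) \in A^3$ with $xz = y^2$; the hypothesis gives $N(A) \leq |A|$. Defining $\phi(g) := |\{y \in A : y^2 = g\}|$ and $F := f * f$, we have $N(A) = \langle \phi, F \rangle$, which I would expand via Plancherel over the irreducible unitary representations $\widehat{G}$.

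The trivial representation contributes the main term $|A|^3/|G|$, and the sum over non-trivial $\rho$ contributes an error that, after applying $|\mathrm{tr}(\hat\phi(\rho)(\hat f(\rho)^*)^2)| \leq \|\hat\phi(\rho)\|_{\mathrm{op}}\cdot\|\hat f(\rho)\|_{\mathrm{HS}}^2$ and Plancherel for $f$, is bounded by $M\cdot|A|$, where $M := \max_{\rho\neq 1}\|\hat\phi(\rho)\|_{\mathrm{op}}$ and $\hat\phi(\rho) = \sum_{y\in A}\rho(y)^2$. If $M$ is substantially smaller than $|A|^2/|G|$, then $N(A)$ is dominated by the main term $|A|^3/|G|$, which already exceeds $|A|$ once $|A| \geq 2\sqrt{|G|}$; since the remaining range $\delta \leq 2|G|^{-1/2}$ is far better than we need, we may assume there is a non-trivial $\rho$ with $\|\hat\phi(\rho)\|_{\mathrm{op}} \geq c\delta^2 |G|$ for a suitable absolute constant $c$.

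Turning this ``quadratic'' Fourier concentration into a density increment on $A$ itself is the heart of the proof. I would try to pass to the quotient $G/\ker\rho$, on which $\rho$ is faithful and $\{\pi(y^2) : y\in A\}$ (with $\pi : G \to G/\ker\rho$) is detectably non-equidistributed. A Sanders-type quantitative Bogolyubov or almost-periods step in $G/\ker\rho$ should then locate a coset of some Bohr-like set on which the pushforward of $A$ has density at least $(1+c)\delta$; lifting and translating produces a coset of a subgroup $H$ of controlled index on which $A$ has density at least $(1+c)\delta$ and still has no non-trivial solutions to $xz = y^2$. Iterating doubles the density after $O(1/\delta)$ steps, and bookkeeping the loss per step on a log-log scale yields $|A|/|G| \leq 1/(\log\log|G|)^{\Omega(1)}$.

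The hard part is precisely this increment step: converting a bound on $\hat\phi(\rho) = \sum_{y\in A}\rho(y)^2$ (a quadratic object) into structural information on $A$ (a linear object). In the abelian odd-order case $\rho(y)^2 = \rho^2(y)$ identifies $\hat\phi(\rho)$ with a genuine Fourier coefficient of $f$ at the character $\rho^2$, recovering the approach of \cite{fragrarod::}. Outside this setting $\rho^2$ is not a representation, so one must work through the normal subgroup $\ker\rho$ and separately control the fibres of the squaring map $y \mapsto y^2$, perhaps via the Frobenius--Schur indicator or a careful double count. Arranging for this step to lose only a $(\log(1/\delta))^{O(1)}$ factor, rather than the tower-type loss inherent in the Kr\'al--Serra--Vena removal-lemma argument, is what I expect to dictate the final bound of $(\log\log|G|)^{\Omega(1)}$.
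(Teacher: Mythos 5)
Your opening reduction (count triples, separate the trivial representation, conclude that either the count beats $|A|$ or some non-trivial $\widehat{\phi}(\rho)$ has large operator norm) is a reasonable non-abelian analogue of the first step of Roth--Bourgain, and it is in the same spirit as the paper's strategy of either counting many solutions or extracting structure. But the proposal has two genuine gaps, both at the point you yourself flag as ``the heart of the proof,'' and neither is repaired by the sketch you give.

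First, the conversion of $\|\widehat{\phi}(\rho)\|_{\mathrm{op}}\geq c\delta^2|G|$ into a density increment cannot go through quotients by $\ker\rho$ or ``a subgroup $H$ of controlled index.'' If $G$ is simple (or quasirandom in Gowers' sense), every non-trivial irreducible representation is faithful, so $G/\ker\rho=G$ and nothing is gained; and general finite groups simply do not contain subgroups of small index, nor do they admit useful Bohr sets (Bohr-type sets in a non-abelian group are normal and hence again useless in the simple case). This is exactly the obstruction the paper is built to circumvent: it replaces subgroups and Bohr sets by ``multiplicative systems'' manufactured inside four-fold product sets via the Croot--Sisask almost-periodicity lemma (Lemma \ref{lem.cs}, Corollary \ref{cor.cont}), and the increment is run on two-sided translates $gB_0h^{-1}$ of these systems rather than on cosets. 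Without some substitute of this kind your increment step has no object to increment onto.

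Second, your iteration does not close even granting an increment, because $xz=y^2$ is not left- or right-translation invariant: the restriction of $A$ to a translate of a structured set, after ``lifting and translating,'' does \emph{not} inherit the property of having no non-trivial solutions, so you cannot simply re-apply the hypothesis at the next stage. (Contrast this with $z=yx^{-1}y$, which is translation invariant and for which the coset argument of Theorem \ref{thm.sol} works.) The paper deals with this by proving a counting lower bound (Theorem \ref{thm.main}) for sets with \emph{distinct squares} --- a property that genuinely is inherited by subsets --- by tracking both a left translate and a right translate of $A$ together with the image of the squaring map (Proposition \ref{prop.u1}, Lemma \ref{lem.squares}), and by re-injecting the solutions found in the structured configuration into $A^3$ at the very end via conjugation by a well-chosen element. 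Some device of this sort is indispensable, and its absence is a real gap rather than a routine omission.
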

For various classes of Abelian groups there has been considerable work improving the bounds in Theorem \ref{thm.mainintro}.  The best known arguments are due to Bloom \cite{blo::0}, although he concentrates on the case of initial segments of the integers, he uses the framework of Bourgain \cite{bou::5} and so the argument easily extends to finite Abelian groups to show the following.
\begin{theorem}\label{thm.bloom}
Suppose that $G$ is a finite Abelian group and $A \subset G$ contains no non-trivial solutions to $x+z=2y$.  Then
\begin{equation*}
|A|=\frac{|G|}{\log^{1-o(1)}|G|}.
\end{equation*}
\end{theorem}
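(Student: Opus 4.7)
The plan is to run Bloom's density-increment argument \cite{blo::0} in the finite Abelian setting, working throughout with Bohr sets in place of initial segments of $\mathbb{Z}$, following the framework of Bourgain \cite{bou::5}. After a short reduction splitting off the $2$-torsion subgroup of $G$ (needed because the map $y \mapsto 2y$ need not be a bijection), set $\alpha = |A|/|G|$ and consider the standard count
\begin{equation*}
T(A) = \sum_{x+z = 2y} 1_A(x)1_A(y)1_A(z) = |G|^2\sum_{r \in \widehat{G}} \widehat{1_A}(r)^2\overline{\widehat{1_A}(2r)},
\end{equation*}
with Fourier coefficients normalised so that $\widehat{1_A}(0) = \alpha$. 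The hypothesis that $A$ has no non-trivial solutions forces $T(A) = O(|A|)$, and since the $r = 0$ term equals $\alpha^3|G|^2$, there must be considerable compensating mass at non-trivial frequencies whenever $\alpha$ is above the target threshold.

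The usual Meshulam-style argument would now pick off one large Fourier coefficient at scale $\eta \sim \alpha$ and increment the density on a Bohr set of rank one, obtaining only the Roth-type bound $\alpha \ll 1/\log|G|$. Bloom's improvement, where the bulk of the work lies, is instead to exploit the additive structure of the entire large spectrum
\begin{equation*}
\Delta_\eta(A) := \{r \in \widehat{G} : |\widehat{1_A}(r)| \geq \eta \alpha\}.
\end{equation*}
Through an iterated application of Chang's theorem, refined by Croot--Sisask and Bateman--Katz style $L^p$-almost-periodicity, one shows that a sizeable subset of $\Delta_\eta(A)$ sits inside a Bohr set of rank $O(\log(1/\alpha)/\log\log(1/\alpha))$. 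This permits a density increment $\alpha \mapsto \alpha(1 + c)$ on a Bohr set whose rank grows substantially more slowly per step than in Sanders' earlier treatment.

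Iterating in the Bourgain manner, with careful tracking of the Bohr-set radius at each stage so that the relevant covering estimates remain polynomial in the rank, one obtains after $O(\log(1/\alpha))$ density increments that $\alpha \leq (\log |G|)^{-(1 - o(1))}$, which is the advertised bound.

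The main obstacle is verifying that Bloom's analytic inputs, in particular the sharp $L^p$-bounds for the spectrum and the Croot--Sisask almost-periodicity lemma, transfer to a general finite Abelian $G$ with no loss. This is routine in principle, since Fourier analysis on a finite Abelian group is formally identical to that on $\mathbb{Z}/N\mathbb{Z}$, but it requires care at those points where Bloom exploits features specific to the integers (for example, genuine dilations and the lacunary structure of geometric progressions), which here must be replaced by their Bohr-set analogues together with the reduction above controlling the effect of $2$-torsion.
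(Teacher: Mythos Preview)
The paper does not actually give a proof of this theorem: it is stated as a known result, attributed to Bloom \cite{blo::0}, with the remark that although Bloom works with initial segments of the integers, his argument is set in Bourgain's Bohr-set framework \cite{bou::5} and therefore ``easily extends to finite Abelian groups'', the only caveat being that ``a small amount of additional care is needed to avoid restricting attention to groups of odd order''. Your proposal is precisely a fleshed-out version of this same remark --- run Bloom's density increment in Bohr sets \`a la Bourgain, and handle the $2$-torsion separately --- so there is nothing to compare: you are sketching the argument the paper points to but does not itself carry out.
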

A small amount of additional care is needed to avoid restricting attention to groups of odd order, but it will be useful to avoid this restriction in the remaining discussion.

It is worth noting that when $G$ is non-Abelian there are two possible notions of three-term arithmetic progression.  The first is triples $(x,y,z)$ such that $xz=y^2$; the second is triples $(x,y,z)$ such that $z=yx^{-1}y$.  This second notion is, perhaps, more natural since it is left (and so by symmetry right) invariant meaning that if $(x,y,z)$ has $z=yx^{-1}y$ then $(ax,ay,az)$ has $az=ay(ax)^{-1}ay$.  The first notion is notion is not, in general, left or right invariant.  (This does not, however, lead to the problems that non-translation invariant equations have in Abelian groups.  For example, if $G=\Z/2N\Z$ then the odd numbers form a subset of $G$ of density $\frac{1}{2}$ not containing any solutions to $x+y=z$.)  Nevertheless our proof is iterative and the lack of translation invariance does present issues. These are discussed in more detail at the start of \S\ref{sec.u1}.

In the same way as before we say that a solution to $z=yx^{-1}y$ is non-trivial if $x \neq y$.  The next result is a slight variant of \cite[Theorem 2.5]{sol::0} also communicated to the author personally by Ernie Croot, and is included for comparison with Theorem \ref{thm.mainintro}.
\begin{theorem}\label{thm.sol}
Suppose that $G$ is a finite group and $A \subset G$ contains no non-trivial solutions to $z=yx^{-1}y$.  Then
\begin{equation*}
|A|=\frac{|G|}{\log^{\frac{1}{2}-o(1)} |G|}.
\end{equation*}
\end{theorem}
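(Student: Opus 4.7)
The plan is to parametrize non-Abelian three-term progressions and then run a Bourgain-style spectral density-increment argument on $G$. Setting $d = yx^{-1}$, the relation $z = yx^{-1}y$ in $(x,y,z)$ becomes the triple $(x, dx, d^2x)$ after substituting $y = dx$, and the non-triviality condition $x \neq y$ is equivalent to $d \neq e$. So the hypothesis reads exactly as $|A \cap d^{-1}A \cap d^{-2}A| = 0$ for every $d \neq e$, and hence the total count $T(A) := \sum_{d \in G} |A \cap d^{-1}A \cap d^{-2}A|$ equals $|A|$, contributed entirely by $d = e$.

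For a ``quasirandom'' set one expects $T(A) \approx |A|^3/|G|$, so writing $\alpha = |A|/|G|$, the hypothesis $T(A) = |A|$ forces non-trivial spectral structure whenever $\alpha^2 |G| \gg 1$. I would expand $T(A)$ via the Peter--Weyl decomposition of $L^2(G)$: each irreducible unitary representation $\rho$ of $G$ contributes a term involving the matrix-valued Fourier coefficient $\widehat{1_A}(\rho) := \sum_g 1_A(g)\rho(g)$, with the trivial representation contributing the main term $\alpha^3 |G|^2$. Comparing this with $T(A) = |A|$ shows that some non-trivial $\rho$ must have $\|\widehat{1_A}(\rho)\|_{\mathrm{op}}$ bounded below by a suitable power of $\alpha$ times $|G|$.

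From such a spectral bias I would extract a coset of a subgroup on which $A$ has density at least $\alpha(1+c)$ for some absolute $c > 0$, then iterate in the style of Bourgain, carefully tracking the shrinkage of the ambient structured set at each stage. This should produce the bound $|A| \leq |G|/\log^{1/2 - o(1)} |G|$. A helpful feature is that $z = yx^{-1}y$ is both left- and right-invariant, so translating $A$ by any group element preserves the hypothesis and interacts cleanly with the equivariance of the Fourier coefficients under left translation, keeping the iterative step formally parallel to the Abelian Roth argument.

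The main obstacle is extracting genuine structure from a large non-trivial Fourier coefficient when $G$ has no small-dimensional non-trivial representations (for instance when $G$ is quasisimple): the Abelian notion of a Bohr set must be replaced by level sets of matrix coefficients of a higher-dimensional $\rho$, and controlling their covering numbers and interaction with $A$ through iteration is delicate. Balancing the constant-factor density gain per step against the cost of shrinking the ambient group is precisely what produces the $\tfrac{1}{2}$ exponent in $\log^{1/2 - o(1)}|G|$.
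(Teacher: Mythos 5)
Your reduction of the hypothesis to $T(A):=\sum_{d}|A\cap d^{-1}A\cap d^{-2}A|=|A|$ is correct, but the next step is a genuine gap. The count $T(A)=\sum_{d,x}1_A(x)1_A(dx)1_A(d^2x)$ does \emph{not} admit a Peter--Weyl expansion of the form ``main term $\alpha^3|G|^2$ plus an error controlled by $\max_{\rho\neq 1}\|\widehat{1_A}(\rho)\|_{\mathrm{op}}$''. The obstruction is the squaring $d\mapsto d^2$, which is not a homomorphism, so the trilinear form cannot be diagonalised by the irreducible representations; this is exactly why establishing even the \emph{counting} statement $T(A)\approx\alpha^3|G|^2$ for quasirandom groups is a hard theorem (Tao's mixing result for $\mathrm{SL}_2(\mathbb{F}_p)$, and Peluse's extension to finite simple groups, both of which need input well beyond a Fourier expansion). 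Your second step --- converting a large non-trivial matrix coefficient $\|\widehat{1_A}(\rho)\|_{\mathrm{op}}$ into a density increment on a coset of a subgroup --- is also not available in general: a quasisimple group has no proper subgroups of small index, and level sets of matrix coefficients of a high-dimensional $\rho$ carry no usable multiplicative structure. You flag this as ``the main obstacle'' but do not resolve it, and it is precisely the obstacle that forces the paper to abandon Fourier analysis in favour of Croot--Sisask-type arguments for its main theorem. Finally, your heuristic for where the exponent $\tfrac12$ comes from is not substantiated: a direct density increment, if it worked, would mimic the Abelian case and give exponent $1$, not $\tfrac12$.

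The actual proof of this particular theorem is two lines and exploits the left-invariance you correctly noted, but in a different way. By a theorem of Pyber, $G$ contains an Abelian subgroup $H$ with $|H|\geq\exp(\Omega(\sqrt{\log|G|}))$. Averaging over left cosets gives $t$ with $|tH\cap A|\geq|A||H|/|G|$, and left-invariance of $z=yx^{-1}y$ means $H\cap t^{-1}A$ is still free of non-trivial solutions inside the Abelian group $H$; applying Bloom's bound there gives $|A||H|/|G|\leq|H|/\log^{1-o(1)}|H|=|H|/\log^{\frac12-o(1)}|G|$, which is where the exponent $\tfrac12$ really comes from (it is the square root in Pyber's theorem, which is essentially sharp). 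If you want a route to the statement, this reduction to the Abelian case is the one to take.
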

\begin{proof}
By a result of Pyber \cite{pyb::} every finite group $G$ contains an Abelian subgroup $H$ of size $\exp(\Omega(\sqrt{\log |G|}))$. (This is essentially best possible.)  Averaging, there is some $t \in G$ such that
\begin{equation*}
|tH \cap A | \geq \E_{t \in G}{|tH \cap A|} = \sum_{h \in H}{\E_{t \in G}{1_A(th)}} = \sum_{h \in H}{\frac{|A|}{|G|}} = \frac{|A||H|}{|G|}.
\end{equation*}
Since solutions to $z=yx^{-1}y$ are left invariant we conclude that $H \cap t^{-1}A$ contains no non-trivial solutions to $z=yx^{-1}y$.  However, this set is a subset of $H$ which is an Abelian group and so we can apply Bloom's result (Theorem \ref{thm.bloom}) to see that
\begin{equation*}
\frac{|A||H|}{|G|} \leq  |tH \cap A| = |H \cap t^{-1}A|= \frac{|H|}{\log^{1-o(1)}|H|} = \frac{|H|}{\log^{\frac{1}{2}-o(1)} |G|},
\end{equation*}
which can be rearranged to give the claimed bound.
\end{proof}

We shall prove Theorem \ref{thm.mainintro} by proving the following.
\begin{theorem}\label{thm.main}
Suppose that $G$ is a finite group, and $A \subset G$ has size $\alpha |G|$ and distinct squares \emph{i.e.} $a^2 \neq b^2$ if $a,b \in A$ are distinct.  Then the number of triples $(x,y,z) \in A^3$ such that $xz=y^2$ is $\exp(-\exp(\alpha^{-{O(1)}}))|G|^2$.
\end{theorem}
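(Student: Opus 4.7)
The plan is to prove Theorem \ref{thm.main} by a density-increment iteration of Roth type, adapted to the non-Abelian setting. Writing the counting functional as
\begin{equation*}
T(f_1, f_2, f_3) := \sum_{x, y \in G} f_1(x) f_2(y) f_3(x^{-1} y^2),
\end{equation*}
the quantity to lower bound is $T(1_A, 1_A, 1_A)$, whose random-model value is $\alpha^3 |G|^2$. The iteration alternates a counting lemma, which says $T$ is close to the model value when $A$ is quasi-random in a suitable sense, with a structure theorem that extracts a density increment on a coset of a subgroup when $A$ is not. Once the density reaches $\Omega(1)$ on the current subgroup, the counting lemma yields the desired lower bound.

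The first ingredient is a generalised von Neumann inequality. Two applications of Cauchy--Schwarz in carefully chosen variables should bound $|T(1_A - \alpha 1_G, 1_A, 1_A)|$ by a quantity controlled by a non-Abelian $U^1$-type seminorm of $1_A - \alpha 1_G$ that measures the maximal correlation of $1_A$ with an indicator of a coset of a subgroup of $G$. An inversion statement for this seminorm then produces a normal subgroup $H \leq G$ of controlled index, together with an element $g_0 \in G$, such that
\begin{equation*}
\frac{|A \cap g_0 H|}{|H|} \geq \alpha\bigl(1 + \alpha^{O(1)}\bigr).
\end{equation*}

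The main obstacle is closing the iteration in the face of the non-translation invariance of $xz = y^2$. Passing from $A$ to $A' := g_0^{-1}(A \cap g_0 H) \subset H$, the condition $xz = y^2$ inside $A \cap g_0 H$ becomes the twisted condition
\begin{equation*}
x \cdot z' = y \cdot \phi_{g_0}(y), \qquad x, y \in A', \ z' \in \phi_{g_0}(A'),
\end{equation*}
on $H$, where $\phi_{g_0}$ denotes the inner automorphism $w \mapsto g_0 w g_0^{-1}$. Although $xz = y^2$ is preserved by inner automorphisms, the equation above is not itself of that form but involves the conjugate set $\phi_{g_0}(A')$ and the twisted squares $y \phi_{g_0}(y)$. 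To resolve this, I would run the iteration on a tuple of conjugate copies of the set rather than on a single set, absorbing the automorphisms into the $U^1$-type framework; this is the technical complication signalled at the start of $\S$\ref{sec.u1}. The distinct-squares hypothesis is preserved by both conjugation and translation and therefore propagates without loss.

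Finally, since the density cannot exceed $1$, the iteration terminates after $\alpha^{-O(1)}$ steps. Combining the cumulative shrinking of the ambient subgroup with the cost per application of the $U^1$-inversion yields the claimed bound $\exp(-\exp(\alpha^{-O(1)}))|G|^2$. The hardest step, and the one I expect to require the bulk of the work, is constructing a $U^1$-type framework compatible with the twisted equations arising at each density-increment stage while still providing an inversion theorem of usable quality.
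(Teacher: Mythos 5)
There is a genuine gap, and it is the central one: your structure theorem asserts that the inversion of the relevant seminorm produces a \emph{normal subgroup $H \leq G$ of controlled index} on a coset of which $A$ has increased density. No such statement can hold for general finite groups. A large simple group has no proper normal subgroups at all, and no proper subgroup of index bounded in terms of $\alpha$ alone; even in the Abelian model $\Z/p\Z$ there are no proper subgroups, which is exactly why Roth's original argument needs Bohr sets rather than cosets. Your proposal is really a proof in the ``finite field model'' (where the analogue of Meshulam's argument in $\F_3^n$ applies), and the entire content of the paper is the replacement of subgroups by \emph{multiplicative systems}: approximate, nested, two-sidedly almost-closed symmetric neighbourhoods manufactured inside four-fold product sets via the Croot--Sisask almost-periodicity lemma (Lemma \ref{lem.cs}, Lemma \ref{lem.nab}, Corollary \ref{cor.cont}). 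Relatedly, after two applications of Cauchy--Schwarz the obstruction to counting is a $U^2$-type quantity, not a $U^1$-type one, and its non-Abelian inverse is not ``correlation with a coset of a subgroup''; the paper circumvents representation theory entirely by using Croot--Sisask to smooth $1_U \ast \mu_V$ and then running a Heath-Brown--Szemer{\'e}di energy increment (Proposition \ref{prop.u2}), which is what delivers a density increment of constant factor $(1+\Omega(1))$ — your increment $1+\alpha^{O(1)}$ forces $\alpha^{-O(1)}$ iterations, which against the doubly-exponential per-step cost would only give a triple-exponential bound, not the claimed $\exp(-\exp(\alpha^{-O(1)}))$.

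Two further points where the proposal is too vague to close. First, the non-invariance of $xz=y^2$ is handled in the paper not by tracking tuples of conjugates but by localising $A$ to \emph{two-sided} translates $gB_0h^{-1}$ and, at each stage, using averaging (Lemmas \ref{lem.equaliseright} and \ref{lem.equaliseleft}) to find a single element $a \in A$ such that $A$ has near-maximal density on both $aB_0'$ and $B_0'a$; the equation is then renormalised through $a$, mapping $(r,s,t)$ to $(a r, a s a, t a)$. Second, you treat the distinct-squares hypothesis as a bookkeeping condition that merely ``propagates'', but its real role is quantitative: Lemma \ref{lem.squares} uses it to show that the \emph{set} $\{s^2 : s \in A\}$ still has density $\Omega(\alpha\delta^2)$ inside the tiny localised set $aX^4a$ (without distinctness the squares could all coincide and the final count would collapse to $O(|G|)$ rather than $\Omega(|G|^2)$ triples). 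Your framework as described does not explain where the squares of the localised set land, which is precisely the step that makes the iteration compatible with the equation $xz=y^2$.
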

Theorem \ref{thm.mainintro} follows immediately from this.
\begin{proof}[Proof of Theorem \ref{thm.mainintro}]
If $a,b \in A$ have $a^2=b^2$ and $a \neq b$, then $(a,b,a)$ is a triple with $aa=b^2$ and $a \neq b$ and we are done.  It follows that we may assume $A$ has distinct squares and so we have a lower bound on the number of triples $(x,y,z) \in A^3$ such that $xz=y^2$.  By hypothesis we know that in this case $x=y$ whence $x=z$ and so the number of such triples is $|A|$ and hence
\begin{equation*}
\alpha |G|=|A|\geq \exp(-\exp(\alpha^{-{O(1)}}))|G|^2
\end{equation*}
which can be rearranged to give the result.
\end{proof}
The remainder of the paper is concerned with proving Theorem \ref{thm.main}.

\section{Notation}

Given a finite set $Z$ we write $M(Z)$ for the set of complex-valued measures on $Z$ and put
\begin{equation*}
\|\mu\|:=\int{d|\mu|} \text{ for all }\mu \in M(Z).
\end{equation*}
Suppose that $\mu$ is a non-negative measure supported on $Z$.  We write $L_p(\mu)$ for the space of functions $f:Z \rightarrow \C$ endowed with the (semi-)norm
\begin{equation*}
\|f\|_{L_p(\mu)}:=\left(\int{|f(z)|^pd\mu(z)}\right)^{1/p},
\end{equation*}
with the usual convention when $p=\infty$.  Of course $L_2(\mu)$ is a Hilbert space we we denote the inner product inducing the norm $\|\cdot\|_{L_2(\mu)}$ by
\begin{equation*}
\langle f,g\rangle_{L_2(\mu)}=\int{f(x)\overline{g(x)}d\mu(x)} \text{ for all }f,g \in L_2(\mu).
\end{equation*}
We shall often take $\mu$ to be the uniform probability measure supported on $Z$ which we denote $\mu_Z$.

Throughout the paper we work with sets in some finite group $G$.  The group structure will be encoded by the left and right regular representations defined on functions $f:G \rightarrow \C$ by
\begin{equation*}
\lambda_x(f)(y):=f(x^{-1}y) \text{ for all }y \in G;
\end{equation*}
and
\begin{equation*}
\rho_x(f)(y):=f(yx) \text{ for all }y \in G.
\end{equation*}
This extends to (complex-valued) measures, $\mu$ on $G$, where for each $x \in G$ we write $\rho_x(\mu)$ for the measure induced by
\begin{equation*}
\int{fd\rho_x(\mu)} = \int{\rho_{x^{-1}}(f)d\mu} \text{ for all }f:G \rightarrow \C,
\end{equation*}
and similarly for $\lambda_x(\mu)$.  Thus,
\begin{equation*}
\rho_x(\mu)(A)=\mu(Ax) \text{ and } \lambda_x(\mu)(A)=\mu(xA) \text{ for all }x\in G, A \subset G.
\end{equation*}

Given a (complex-valued) measure $\mu$ on $G$, and a function $f:G \rightarrow \C$ we define\begin{equation*}
\langle f, \mu\rangle := \int{fd\overline{\mu}} \text{ and } \langle \mu,f\rangle := \int{\overline{f}d\mu};
\end{equation*}
similarly define the \textbf{convolution} of $f$ and $\mu$ to be
\begin{equation*}
f\ast \mu(x):=\int{f(xy^{-1})d\mu(y)} \text{ and } \mu \ast f(x)=\int{f(y^{-1}x)d\mu(y)}\text{ for all }x \in G .
\end{equation*}
It is worth noting that if $A\subset G$ is non-empty then
\begin{equation*}
f \ast \mu_A(x) = \E_{a \in A}{f(xa^{-1})} = \E_{z \in xA^{-1}}{f(z)};
\end{equation*}
\emph{i.e.} $f \ast \mu_A(x)$ is the average value of $f$ on $xA^{-1}$.  Similarly, $\mu_A \ast f(x)$ is the average value of $f$ on $A^{-1}x$.

We shall also look to convolve two measures: suppose that $\mu$ and $\nu$ are such.  Then we define their convolution to be the measure induced by
\begin{equation*}
\int{f(z)d(\mu\ast \nu)(z)} = \int{f(xy)d\mu(x)d\mu(y)} \text{ for all }f :G \rightarrow \C.
\end{equation*}
Here it is worth noting that if $A,A' \subset G$ are non-empty then
\begin{equation*}
\supp \mu_A \ast \mu_{A'} = AA':=\{aa':a \in A, a' \in A'\}.
\end{equation*}

Finally, for $f:G \rightarrow \C$ define the
\begin{equation*}
\tilde{f}(x):=\overline{f(x^{-1})} \text{ for all }x \in G,
\end{equation*}
and similarly for measures.  Note that if $A \subset G$ is non-empty then $\widetilde{\mu_A} = \mu_{A^{-1}}$ where $A^{-1}:=\{a^{-1}: a \in A\}$.

We introduced the last piece of notation because it captures the adjoint operation.  In particular, the adjoint of $\mu \mapsto f \ast \mu$ is $\nu \mapsto \tilde{f}\ast\nu$ \emph{i.e.}
\begin{equation*}
\langle f \ast \mu,\nu\rangle = \langle \mu, \tilde{f}\ast \nu\rangle \text{ for all measures } \mu,\nu,
\end{equation*}
and, again, similarly for convolution with measures.  One can also use this notation to capture convolution:
\begin{equation*}
f \ast \mu(x) =\langle \rho_x(\mu), \tilde{f}\rangle \text{ and } \mu \ast f(x) = \langle \rho_x(f),\tilde{\mu}\rangle \text{ for all }x \in G,
\end{equation*}
and, similarly for $\lambda$.

\section{Multiplicative systems}\label{sec.ms}

Since the work of Roth \cite{rot::,rot::0} the standard approach to problems of the type considered in this paper has been inductive.  As often happens with inductive arguments they become possible when we enlarge the class we are working over.  In this case we shall prove our result not just for large subsets of groups, but for large subsets of certain group-like objects which we shall call multiplicative systems.

There is nothing particularly novel about the multiplicative systems presented in this paper and there are numerous essentially equivalent definitions extracting the key properties of a group which we require.  In the Abelian setting this has been explored extensively since the pioneering work of Bourgain \cite{bou::5}.  It may be worth noting that Gowers and Wolf use some nice notation in \cite{gowwol::} and Bloom \cite{blo::0} takes as basic a structure which is pretty close to ours.

Two of the axioms a group satisfies are particularly easy to guarantee for subsets of a group: we say that a set $A \subset G$ is a \textbf{symmetric neighbourhood of the identity} if it contains the identity and is closed under taking inverses \emph{i.e.} $1_G \in A$ and $x \in A$ implies $x^{-1} \in A$.  What is harder to capture is closure under multiplication and, indeed, we have to make do with a sort of `approximate' closure.  Given $r \in\N$ and $\epsilon\in [0,1]$ we say that
\begin{equation*}
\mathcal{B}=(B_{0+},B_0,B_{0-};B_{1+},B_1,B_{1-};\dots;B_{r+},B_{r},B_{r-};B_{r+1})
\end{equation*}
is an \textbf{$(r+1)$-step $\epsilon$-closed multiplicative system} if
\begin{enumerate}
\item \emph{(Symmetric neighbourhoods)} $B_{r+1}$, and $B_{i+}$, $B_i$, and $B_{i-}$, are symmetric neighbourhoods of the identity for all $0 \leq i \leq r$;
\item \emph{(Nesting)} we have
\begin{equation*}
B_{0+} \supset B_0 \supset B_{0-} \supset B_{1+} \supset B_ {1}\supset B_{1-}\supset \dots \supset B_{r+}\supset B_{r}\supset B_{r-}\supset B_{r+1};
\end{equation*}
\item \emph{(Closure)} we have
\begin{equation*}
B_{i-} \subset yB_ix \subset B_{i+} \text{ for all }x,y \in B_{i+1} \text{ for all } 0 \leq i \leq r,
\end{equation*}
and the estimates
\begin{equation*}
\frac{1}{1+\epsilon}|B_{i+}| \leq |B_i| \leq (1+\epsilon)|B_{i-}|\text{ for all } 0 \leq i \leq r.
\end{equation*}
\end{enumerate}
Note here that since the $B_i$s and $B_{i\pm}$s contain the identity, a number of the inclusions in nesting follow from closure.

The idea here is that an $r$-step system in enough to multiply group elements together about `$r$ times'.  The parameter $\epsilon$ captures the error each time we do this.  While we have set these systems up quite generally we shall only make use of systems with $r$ small, typically 1 or 2.

When we need more than one multiplicative system they will be denoted $\mathcal{B}'$, $\mathcal{B}''$ \emph{etc.} with the obvious convention that
\begin{equation*}
\mathcal{B}'=(B_{0+}',B_0',B_{0-}';B_{1+}',B_1',B_{1-}';\dots;B_{r+}',B_{r}',B_{r-}';B_{r+1}').
\end{equation*}
This is the reason that we have not chosen the easier-to-read notation $B_i^+$ and $B_i^-$ for $B_{i+}$ and $B_{i-}$.

The model we have in mind, and perhaps the simplest example, is given by groups.
\begin{example}[Groups]
Suppose that $H_{r+1} \leq H_r \leq \dots \leq H_1 \leq H_0 \leq G$.  Then
\begin{equation*}
(H_0,H_0,H_0;H_1,H_1,H_1;\dots;H_r,H_r,H_r;H_{r+1})
\end{equation*}
is an $(r+1)$-step $0$-closed multiplicative system.  
\end{example}
This example, and in fact the special case when $H_0=H_{r+1}$ is a very useful example to have in mind on a first reading of many of the results below.

In a $1$-step multiplicative system $\mathcal{B}$ we think of $B_1$ as `acting on' $B_0$, and there are many examples of multiplicative systems resulting from trivial action sets.
\begin{example}[Trivial action set]
For any symmetric neighbourhood of the identity $A$, we have that $(A,A,A;\{1_G\})$ is a $1$-step $0$-closed system.
\end{example}
The point of this example is just to emphasise that we shall be interested in the case when $B_{i+1}$ is not too much smaller than $B_i$.

To get a sense of how the various parameters behave it is useful to record some basic properties of multiplicative systems.
\begin{lemma}[Basic properties of multiplicative systems]
Suppose that $\mathcal{B}$ and $\mathcal{B}'$ are $(r+1)$-step (resp. $(r'+1)$-step) $\epsilon$-closed multiplicative systems.  Then
\begin{enumerate}
\item \emph{(Monotonicity)} $\mathcal{B}$ is an $(r+1)$-step $\epsilon''$-closed multiplicative system for all $\epsilon'' \geq \epsilon$;
\item \emph{(Truncation)} for $0 \leq l \leq m \leq r$ and any symmetric neighbourhood of the identity $B_* \subset B_{m+1}$,
\begin{equation*}
(B_{l+},B_l,B_{l-};\dots;B_{m+},B_{m},B_{m-};B_*)
\end{equation*}
is an $(m-l+1)$-step $\epsilon$-closed multiplicative system;
\item \emph{(Gluing)} if $B_{0+}' \subset B_{r+1}$ then
\begin{equation*}
(B_{0+},B_0,B_{0-};\dots;B_{r+},B_r,B_{r-};B_{0+}', B_0',B_{0-}'; \dots ; B_{r'+}',B_{r'}',B_{r'-}';B_{(r'+1)})
\end{equation*}
is an $(r+r'+1)$-step $\epsilon$-closed multiplicative system.
\end{enumerate}
\end{lemma}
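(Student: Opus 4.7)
All three parts reduce to unpacking the definition and checking each axiom at each index, so the argument is essentially bookkeeping. The main idea is that Monotonicity and Truncation are virtually immediate, while Gluing needs one observation about the join.

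\textbf{Monotonicity.} The axioms (Symmetric neighbourhoods) and (Nesting) make no reference to $\epsilon$ and are inherited verbatim. The containment half of (Closure) likewise does not involve $\epsilon$. Only the size inequalities $(1+\epsilon)^{-1}|B_{i+}| \leq |B_i|$ and $|B_i| \leq (1+\epsilon)|B_{i-}|$ depend on $\epsilon$, and since the map $\epsilon \mapsto 1+\epsilon$ is monotone increasing, these inequalities become weaker as $\epsilon$ grows. So replacing $\epsilon$ by any $\epsilon'' \geq \epsilon$ preserves everything.

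\textbf{Truncation.} Since each $B_{l+},B_l,B_{l-},\dots,B_{m+},B_m,B_{m-}$ is a symmetric neighbourhood of the identity in $\mathcal{B}$, and $B_*$ is a symmetric neighbourhood of the identity by hypothesis, axiom (Symmetric neighbourhoods) is satisfied. Nesting of the truncated list is inherited from the nesting of $\mathcal{B}$ together with $B_* \subset B_{m+1} \subset B_{m-}$. For closure, at each index $l \leq i \leq m-1$ the required containment and size estimates are copied from $\mathcal{B}$ at the same index. The only index that needs checking is the last, where we must verify $B_{m-} \subset yB_mx \subset B_{m+}$ for all $x,y$ in the new terminal block $B_*$; since $B_* \subset B_{m+1}$, this follows from the closure of $\mathcal{B}$ at index $m$. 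The size estimates at index $m$ are untouched.

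\textbf{Gluing.} (Symmetric neighbourhoods) is inherited from the two systems. For (Nesting), the only non-trivial step is at the join: we need $B_{r-} \supset B_{0+}'$, which follows from the nesting of $\mathcal{B}$ ($B_{r-} \supset B_{r+1}$) together with the hypothesis $B_{0+}' \subset B_{r+1}$. Chaining through $\mathcal{B}'$ below $B_{0+}'$ then completes the nested chain. For (Closure) at any index strictly inside $\mathcal{B}$ or strictly inside $\mathcal{B}'$ there is nothing to do. At the join, the closure axiom at the position of $B_r$ in the glued system demands $B_{r-} \subset yB_rx \subset B_{r+}$ for all $x,y$ in the block directly below $B_{r-}$, which in the glued system is $B_{0+}'$; using $B_{0+}' \subset B_{r+1}$ this is exactly the closure of $\mathcal{B}$ at index $r$. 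The closure axiom at the position of $B_0'$ is precisely the closure of $\mathcal{B}'$ at index $0$, because the block above $B_{0-}'$ in the glued system is $B_{1+}'$, unchanged from $\mathcal{B}'$. The size estimates are copied index by index from the relevant source system.

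The only place any real content is used is the hypothesis $B_{0+}' \subset B_{r+1}$, which is needed exactly to transfer the closure relation of $\mathcal{B}$ at index $r$ across the join; I expect this to be the one step worth writing out carefully, while everything else is definition-chasing.
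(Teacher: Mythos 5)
Your verification is correct, and it matches the paper's treatment: the paper states this lemma without proof, regarding it as immediate definition-chasing, and your check of each axiom (with the one substantive observation that $B_{0+}'\subset B_{r+1}$ transfers the closure relation across the join) is exactly the argument that is being omitted. The only nitpick is that the acting set in the closure axiom at the join is the middle element $B_0'$ of the next triple rather than $B_{0+}'$, but since $B_0'\subset B_{0+}'\subset B_{r+1}$ your argument covers this a fortiori.
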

At this stage we have not given any examples of multiplicative systems that are not either trivial or endowed with the far stronger structure of being a nested sequence of subgroups.  In Abelian groups we have a rich set of examples provided by Bohr sets.
\begin{example}[Bohr sets]\label{ex.bohr}
Throughout this example suppose that $G$ is an Abelian group and use additive rather than multiplicative notation for the group operation. Suppose that $\Gamma$ is a set of $d$ characters on $G$, and $\delta \in (0,2]$.  We define the \textbf{Bohr set with frequency set $\Gamma$ and width $\delta$} to be the set
\begin{equation*}
\Bohr(\Gamma,\delta):=\{x \in G: |\gamma(x)-1| \leq \delta \text{ for all }\gamma \in \Gamma\}.
\end{equation*}
Some fairly straight-forward arguments which can be found in \emph{e.g.} \cite[Lemma 4.19]{taovu::} show that for $l \in \N$ we have
\begin{equation*}
|\Bohr(\Gamma,\delta)| \leq l^{O(d)}|\Bohr(\Gamma,\delta/l)|
\end{equation*}
and an easy application of the triangle inequality shows us that
\begin{equation*}
l\Bohr(\Gamma,\delta/l):=\overbrace{\Bohr(\Gamma,\delta/l) +\dots + \Bohr(\Gamma,\delta/l)}^{l \text{ times.}} \subset \Bohr(\Gamma,\delta).
\end{equation*}
The ability to \emph{dilate} Bohr sets lets us use them to produce multiplicative systems.  In particular, we have
\begin{equation*}
l\Bohr(\Gamma,\delta/l)+\Bohr(\Gamma,\delta) \subset \Bohr(\Gamma,2\delta)
\end{equation*}
and so by the pigeonhole principle there is some $j < l$ such that
\begin{equation*}
|\Bohr(\Gamma,\delta/l)+(j\Bohr(\Gamma,\delta/l) +\Bohr(\Gamma,\delta))| \leq \exp(O(d/l))|j\Bohr(\Gamma,\delta/l) +\Bohr(\Gamma,\delta)|
\end{equation*}
For any $\epsilon \in (0,1]$ (the closure parameter of the system) we can pick $l=O(d\epsilon^{-1})$ such that $\exp(O(d/l)) \leq 1+\epsilon$, and so we have two sets 
\begin{equation*}
B_0:=j\Bohr(\Gamma,\delta/l) +\Bohr(\Gamma,\delta) \text{ and } B_1:=\Bohr(\Gamma,\delta/l)
\end{equation*}
such that $|B_1+B_0| \leq (1+\epsilon)|B_0|$.  It is a short step from here to defining a $2$-step $\epsilon$-closed multiplicative system.\footnote{We do not do it because, while $B_{0+}$ can just be defined to be $B_0+B_1$, there is a small technical obstacle to defining $B_{0-}$.  This is easy to resolve but detracts from the example.}  Crucially this multiplicative system satisfies
\begin{equation*}
|B_1| = \Omega(1/l)^{O(d)}|B_0|.
\end{equation*}
In fact in the case of Bohr sets a structure somewhat stronger than a multiplicative system can be constructed: we can arrange for a nested sequence of so-called regular Bohr sets.  This was originally done in \cite{bou::5}; an exposition may be found around \cite[Definition 4.23]{taovu::}.
\end{example}

There are natural analogues of Bohr sets in general finite groups, but they tend to describe only normal subsets of a group and that is not rich enough for our purposes.  We shall take a different approach to find a supply of multiplicative systems motivated by a result of Bogolio{\'u}boff \cite{bog::}.

Bogolio{\'u}boff showed that if $A$ is a symmetric subset of an Abelian group of density $\alpha$ then $4A:=A+A+A+A$ contains a large Bohr set.  Turning this around, in a general group we shall look for our multiplicative systems inside four-fold product set of large subsets of $G$.  Bogolio{\'u}boff's lemma was improved by Chang in \cite{cha::0}, and recently Croot and Sisask discovered a generalisation of Chang's argument to non-Abelian groups.

The following result is essentially \cite[Corollary 1.4]{crosis::} extended to functions.  We only need the version for sets here as it happens, but the proof for functions is no harder.  (We shall have to examine this later in Lemma \ref{lem.csrel} when we establish a version of the Croot-Sisask Lemma for multiplicative systems.)
\begin{lemma}[The Croot-Sisask Lemma]\label{lem.cs}  Suppose that $f \in L_p(\mu_G)$ for some $p \in [2,\infty)$, $X \subset G$ has density $\delta:=\mu_G(X)>0$, and $\eta \in (0,1]$ is a parameter.  Then the set of $x$ such that 
\begin{equation*}
\|\rho_{x^{-1}}(f \ast \mu_X) - f \ast \mu_X\|_{L_p(\mu_G)} \leq \eta \|f\|_{L_p(\mu_G)}
\end{equation*}
is a symmetric neighbourhood of the identity and has density $\exp(-O(\eta^{-2}p\log2\delta^{-1}))$.
\end{lemma}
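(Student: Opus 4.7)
My plan is to follow the random sampling plus pigeonhole strategy that Croot and Sisask used in the original version of this result. The core idea is that $f \ast \mu_X(y) = \E_{x \in X} f(yx^{-1})$ can be approximated by an empirical average over $k$ iid samples from $X$, and then a pigeonhole argument locates many pairs of samples that differ by a single right translation, which will be exactly the $t$'s the lemma is asking for.

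For the approximation step, set $k := \lceil C \eta^{-2} p\rceil$ for a sufficiently large absolute constant $C$, pick $\omega = (x_1,\dots,x_k) \in X^k$ uniformly, and define $F_\omega(y) := \frac{1}{k}\sum_{i=1}^k f(yx_i^{-1})$. For each fixed $y$ the random variables $f(yx_i^{-1})$ are iid with mean $f \ast \mu_X(y)$, so Marcinkiewicz--Zygmund gives
\[
\bigl(\E_\omega |F_\omega(y) - f \ast \mu_X(y)|^p\bigr)^{1/p} \leq O(\sqrt{p/k})\,\bigl(\E_{x \in X} |f(yx^{-1})|^p\bigr)^{1/p}.
\]
Raising to the $p$-th power, integrating in $y$ against $\mu_G$, and using Fubini together with the right-invariance of $\mu_G$ (so that the inner $L_p(\mu_X)$ norm integrates to $\|f\|_{L_p(\mu_G)}^p$), I obtain $\E_\omega \|F_\omega - f \ast \mu_X\|_{L_p(\mu_G)}^p \leq (O(\sqrt{p/k}))^p \|f\|_{L_p(\mu_G)}^p$. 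Choosing $C$ large enough and applying Markov, the set
\[
\Omega_{\mathrm{good}} := \bigl\{\omega \in X^k : \|F_\omega - f \ast \mu_X\|_{L_p(\mu_G)} \leq (\eta/2)\|f\|_{L_p(\mu_G)}\bigr\}
\]
satisfies $|\Omega_{\mathrm{good}}| \geq \tfrac{1}{2}|X|^k$.

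The algebraic key is the identity $F_{\omega t}(y) = F_\omega(yt^{-1}) = \rho_{t^{-1}}F_\omega(y)$, where $\omega t := (x_1 t,\dots,x_k t)$, which follows straight from the definitions. Since $\rho_{t^{-1}}$ is an $L_p(\mu_G)$-isometry (by right-invariance of $\mu_G$), whenever both $\omega$ and $\omega t$ lie in $\Omega_{\mathrm{good}}$ the triangle inequality yields
\[
\|\rho_{t^{-1}}(f \ast \mu_X) - f \ast \mu_X\|_{L_p(\mu_G)} \leq \|F_\omega - f \ast \mu_X\|_{L_p(\mu_G)} + \|F_{\omega t} - f \ast \mu_X\|_{L_p(\mu_G)} \leq \eta \|f\|_{L_p(\mu_G)},
\]
so $t$ belongs to the set $S$ in the lemma. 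To manufacture many such $t$'s I would pigeonhole through $\psi : X^k \to G^{k-1}$, $(x_1,\dots,x_k) \mapsto (x_2 x_1^{-1}, \dots, x_k x_1^{-1})$, whose fibres are precisely the orbits of the right action $\omega \mapsto \omega t$. Since $|\Omega_{\mathrm{good}}| \geq \tfrac{1}{2}\delta^k |G|^k$ while $|\psi(X^k)| \leq |G|^{k-1}$, some fibre of $\psi|_{\Omega_{\mathrm{good}}}$ contains at least $\tfrac{1}{2}\delta^k |G|$ tuples; fixing one of them as $\omega_0$, every other element $\omega_0 t$ of this fibre contributes a distinct element $t \in S$. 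This gives $|S|/|G| \geq \delta^k/2 = \exp(-O(\eta^{-2} p \log 2\delta^{-1}))$, and the fact that $S$ is a symmetric neighbourhood of the identity is immediate from $\rho_t$ being an $L_p(\mu_G)$-isometry.

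The main obstacle I anticipate is securing the sharp $\sqrt{p}$-dependence in Marcinkiewicz--Zygmund rather than something worse like $p$ --- this is what keeps $k$ linear in $p$ and hence leaves $\log 2\delta^{-1}$ (rather than a higher power of it) in the final exponent. A secondary but essential point to be careful about is that the right-invariance of $\mu_G$ is genuinely needed to carry the right translation $\rho_{t^{-1}}$ across the $L_p(\mu_G)$ norm; this is why the lemma is naturally phrased for the right convolution $f \ast \mu_X$ and why one has to work with $\rho$ rather than mix $\rho$ and $\lambda$.
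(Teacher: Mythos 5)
Your argument is correct and is essentially the argument of Croot--Sisask that the paper itself relies on: the paper only states Lemma \ref{lem.cs} with a citation to \cite{crosis::}, but its proof of the relative version (Lemma \ref{lem.csrel}) uses exactly your strategy of empirical sampling controlled by Marcinkiewicz--Zygmund followed by locating many $t$ with $\omega,\omega t$ both ``good''. The only cosmetic difference is in how the good translates are counted: you pigeonhole on the fibres of $(x_1,\dots,x_k)\mapsto(x_2x_1^{-1},\dots,x_kx_1^{-1})$, while the paper lower-bounds $\langle \widetilde{1_{\mathcal{L}}}\ast\mu_{\mathcal{L}},\mu_\Delta\ast\widetilde{\mu_\Delta}\rangle$ by Cauchy--Schwarz; both give the same $\Omega(\delta^k)$ density.
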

We shall now use this to establish a Bogolio{\'u}boff-type lemma in general groups.  Before diving in, we should say that this result is just a variant of \cite[Theorem 1.6]{crosis::}.
\begin{lemma}\label{lem.nab}
Suppose that $X\subset G$ is a symmetric neighbourhood of the identity of density $\delta:=\mu_G(X)>0$, and $k \in \N$ is a parameter.  Then there is a symmetric neighbourhood of the identity $S$ such that
\begin{equation*}
S^k \subset X^4 \text{ and } \mu_G(S) \geq \exp(-O(k^2\log^22\delta^{-1})).
\end{equation*}
\end{lemma}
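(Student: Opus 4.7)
The plan is to invoke the Croot--Sisask Lemma (Lemma~\ref{lem.cs}) with $f=1_X$, and then deduce $S^k\subset X^4$ from the resulting approximate $\rho$-invariance by a Bogolyubov-style overlap argument.

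Concretely, I choose parameters $p=\Theta(\log 2\delta^{-1})$ and $\eta=\Theta(1/k)$. Lemma~\ref{lem.cs} applied to $f=1_X$ (with $X$ itself as the set being convolved) then produces a symmetric neighbourhood of the identity
\[
T:=\bigl\{x\in G:\|\rho_{x^{-1}}(1_X\ast\mu_X)-1_X\ast\mu_X\|_{L_p(\mu_G)}\leq\eta\|1_X\|_{L_p(\mu_G)}\bigr\}
\]
of density $\mu_G(T)\geq\exp(-O(\eta^{-2}p\log 2\delta^{-1}))=\exp(-O(k^2\log^2 2\delta^{-1}))$. Since $\mu_G$ is right-invariant, $\rho_{y^{-1}}$ is an $L_p(\mu_G)$-isometry, so by the triangle inequality every product $s=x_1\cdots x_k$ with each $x_i\in T$ satisfies
\[
\|\rho_{s^{-1}}(1_X\ast\mu_X)-1_X\ast\mu_X\|_{L_p(\mu_G)}\leq k\eta\|1_X\|_{L_p(\mu_G)}=k\eta\delta^{1/p}.
\]

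Writing $g:=1_X\ast\mu_X$ and recalling that since $X$ is symmetric the support of $g$ is $X\cdot X^{-1}=X^2$, it is enough to show that each $s\in T^k$ admits some $y$ with both $g(y)>0$ and $g(ys^{-1})>0$: then $y\in X^2$ and $ys^{-1}\in X^2$ together force $s=(ys^{-1})^{-1}y\in X^{-2}X^2=X^4$. To produce such a $y$, I would combine Markov's inequality applied to the displayed $L_p$-bound---giving $\mu_G(\{y:|g(y)-g(ys^{-1})|>\tau\})\leq(k\eta/\tau)^p\delta$---with the lower bound $\mu_G(\{g>t\})\geq\delta-t$ which follows from $g\leq 1$ and $\int g\,d\mu_G=\delta$. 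Picking $\tau<t$ so that the Markov estimate is strictly dominated by $\mu_G(\{g>t\})$ produces the required overlap, and then $S:=T$ delivers the conclusion.

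The main delicate point is the final parameter calibration. With $\eta=\Theta(1/k)$ and $p=\Theta(\log 2\delta^{-1})$ the $L_p$-bound $k\eta\delta^{1/p}$ is of order $1$, and since $g$ has mean only $\delta$ and is bounded by $1$, a naive Markov-versus-level-set comparison picks up an extra $\delta^{-2}$ factor in $\mu_G(T)^{-1}$. Matching the stated density $\exp(-O(k^2\log^2 2\delta^{-1}))$ is the main quantitative obstacle: recovering it requires a more careful balancing, perhaps running the overlap argument on a convolutional smoothing of $g$ (trading a smaller $L_p$-shift bound via Young's inequality against a suitably rescaled level-set threshold) so that the Markov estimate sits below $\mu_G(\{g>t\})$ without forcing $\eta$ to shrink with $\delta$.
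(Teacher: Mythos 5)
Your overall strategy --- Croot--Sisask applied to a convolution, a triangle inequality over $k$-fold products, then a support argument to land in $X^4$ --- is the right one, but the implementation with $f=1_X$ does not close, and you concede as much in your final paragraph. The quantitative obstruction you identify is real and is not repaired by further smoothing of $g=1_X\ast\mu_X$: since $g$ has mean $\delta$ and is bounded by $1$, any level set $\{g>t\}$ is only guaranteed measure $\gtrsim\delta$ when $t\lesssim\delta$, and dominating the Markov bound $(k\eta\delta^{1/p}/\tau)^p$ with $\tau<t\lesssim\delta$ by a quantity of size $\delta$ forces $\eta\lesssim\delta/k$. Fed back into Lemma \ref{lem.cs} this gives density $\exp(-O(k^2\delta^{-2}p\log 2\delta^{-1}))$, which is polynomially worse in $\delta$ than the claimed $\exp(-O(k^2\log^2 2\delta^{-1}))$.

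The missing idea is to apply Lemma \ref{lem.cs} with $f=1_{X^2}$ rather than $f=1_X$. The function $1_{X^2}\ast\mu_X$ is identically $1$ on $X$ (for $y\in X$ and $a\in X$ one has $ya^{-1}\in XX^{-1}=X^2$), so it is saturated at its maximum on a set of density $\delta$ and there is no level-set calibration left to do. One then pairs against $1_X$ by H\"older: the main term is $\langle 1_{X^2}\ast\mu_X,1_X\rangle_{L_2(\mu_G)}=\mu_G(X)$ exactly, while for $x$ in the $k$-fold product of the Croot--Sisask set the shift error is at most $k\eta\|1_{X^2}\|_{L_p(\mu_G)}\|1_X\|_{L_{p'}(\mu_G)}\leq k\eta\delta^{-1/p}\mu_G(X)$. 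Taking $p=O(\log 2\delta^{-1})$ makes $\delta^{-1/p}=O(1)$, so $\eta=\Omega(1/k)$ suffices to push the error below $\mu_G(X)/2$; crucially $\eta$ does not shrink with $\delta$, and the density bound $\exp(-O(\eta^{-2}p\log 2\delta^{-1}))=\exp(-O(k^2\log^2 2\delta^{-1}))$ follows. Finally $\langle\rho_{x^{-1}}(1_{X^2}\ast\mu_X),1_X\rangle_{L_2(\mu_G)}=1_X\ast 1_{X^2}\ast\mu_X(x^{-1})$, whose support lies in $X^4$, so positivity of the pairing gives $x\in X^4$. (Your deduction of $s\in X^4$ from a common point of positivity is fine as far as it goes; it is only the measure-theoretic calibration that fails.)
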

\begin{proof}
Let $p \geq 2$ and $\eta \in (0,1]$ be parameters to be chosen later and apply Lemma \ref{lem.cs} with $f=1_{X^2}$ to get a symmetric neighbourhood of the identity $S$ such that
\begin{equation*}
\|\rho_{x^{-1}}(1_{X^2} \ast \mu_{X}) - 1_{X^2} \ast \mu_{X}\|_{L_p(\mu_G)} \leq \eta \|1_{X^2}\|_{L_p(\mu_G)}.
\end{equation*}
By the triangle inequality we have that
\begin{equation*}
\|\rho_{x^{-1}}(1_{X^2} \ast \mu_{X}) - 1_{X^2} \ast \mu_{X}\|_{L_p(\mu_G)} \leq k\eta \|1_{X^2}\|_{L_p(\mu_G)},
\end{equation*}
for all $x \in S^k$, and so by H{\"o}lder's inequality we see that
\begin{equation*}
|\langle \rho_{x^{-1}}(1_{X^2} \ast \mu_X),1_X\rangle_{L_2(\mu_G)} - \langle 1_{X^2} \ast \mu_X,1_X\rangle_{L_2(\mu_G)} |  \leq k\eta |X|^{1-1/p}|X^2|^{1/p} \leq k\eta \delta^{-1/p}|X|
\end{equation*}
since $X^2 \subset G$.  On the other hand
\begin{equation*}
 \langle 1_{X^2} \ast \mu_X,1_X\rangle_{L_2(\mu_G)}=|X|,
\end{equation*}
so we can take $p=O(\log 2\delta^{-1})$ and $\eta = \Omega(1/k)$ such that
\begin{equation*}
\langle \rho_{x^{-1}}(1_{X^2} \ast \mu_X),1_X\rangle_{L_2(\mu_G)}  >|X|/2 \text{ for all } x\in S^k.
\end{equation*}
But then the result follows since
\begin{equation*}
\langle \rho_{x^{-1}}(1_{X^2} \ast \mu_X),1_X\rangle_{L_2(\mu_G)}= 1_X \ast 1_{X^2} \ast \mu_X(x^{-1}),
\end{equation*}
and $\supp 1_X \ast 1_{X^2} \ast \mu_X \subset X^4$.
\end{proof}
If $G$ were Abelian, and $X$ a Bohr set then it would be possible to take $S$ with
\begin{equation*}
|S| \geq \exp(-O((\log 2k)( \log 2\delta^{-1}))),
\end{equation*}
and so the $\delta$-dependence in the above is not too bad even though the $k$ dependence is rather poor.  Fortunately in our applications $k$ will tend to be fixed, while $\delta$ will decrease.

Finally we can use this lemma to produce some multiplicative systems.  The argument is essentially the same pigeon-hole as we did with Bohr sets in Example \ref{ex.bohr}, replacing the nice properties of dilates of Bohr sets by applications of Lemma \ref{lem.nab}.
\begin{corollary}\label{cor.cont}
Suppose that $X$ is a symmetric neighbourhood of the identity in $G$ with $\delta:=\mu_G(X) >0$, and $r \in \N_0$ and $\epsilon \in (0,1]$ are parameters.  Then there is an $(r+1)$-step $\epsilon$-closed multiplicative system $\mathcal{B}$ and some symmetric neighbourhood of the identity $S$ such that
\begin{equation*}
B_{0+} \subset X^4, S^4\subset B_{r+1} \text{ and } \mu_G(S) \geq \exp(-O((\epsilon^{-2}\log 2\delta^{-1})^{4^{r+1}})).
\end{equation*}
\end{corollary}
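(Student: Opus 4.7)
The plan is to build the multiplicative system level by level, paralleling the Bohr set construction in Example \ref{ex.bohr} with Lemma \ref{lem.nab} playing the role of the dilation property of Bohr sets.

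First, construct a decreasing sequence of symmetric neighbourhoods of the identity $V_0 \supset V_1 \supset \cdots \supset V_{r+1}$. Set $V_0$ from a single preliminary application of Lemma \ref{lem.nab} to $X$ with parameter $10$, ensuring $V_0^{10} \subset X^4$; then iteratively obtain $V_{i+1}$ by applying Lemma \ref{lem.nab} to $V_i$ with a parameter $k_i$ chosen as a suitably large constant multiple of $\epsilon^{-1} \log(2\mu_G(V_i)^{-1})$, yielding $V_{i+1}^{k_i} \subset V_i^4$ and $\mu_G(V_{i+1}) \geq \exp(-O(k_i^2 \log^2(2\mu_G(V_i)^{-1})))$.

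Next, at each level $0 \leq i \leq r$, pigeonhole on the monotone sequence $a_n := |V_{i+1}^n V_i^2 V_{i+1}^n|$ for $n = 0, 1, \ldots, k_i/2$. The key observation is that every such $a_n$ sits inside $V_i^{10}$ (because $V_{i+1}^n \subset V_i^4$ for $n \leq k_i$), so the log-range $\log(a_{k_i/2}/a_0)$ is bounded by $\log(|V_i^{10}|/|V_i^2|) \leq \log(2\mu_G(V_i)^{-1})$---importantly independent of $\mu_G(V_{i+1})$. The choice of $k_i$ above then lets the pigeonhole produce an index $j_i$ with $a_{j_i+8}/a_{j_i} \leq 1+\epsilon$, and by monotonicity $a_{j_i+4}/a_{j_i}$ and $a_{j_i+8}/a_{j_i+4}$ are each $\leq 1+\epsilon$ too. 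Declare
\begin{equation*}
B_{i-} := V_{i+1}^{j_i} V_i^2 V_{i+1}^{j_i},\quad B_i := V_{i+1}^{j_i+4} V_i^2 V_{i+1}^{j_i+4},\quad B_{i+} := V_{i+1}^{j_i+8} V_i^2 V_{i+1}^{j_i+8},
\end{equation*}
set the terminal $B_{r+1} := V_{r+1}^4$, and take $S := V_{r+1}$.

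Verifying the axioms is then routine: symmetry is inherited from the $V_i$, the upper closure $B_{i+1} B_i B_{i+1} \subset B_{i+}$ is actually equality by construction, the lower closure follows by the dual computation using symmetry, and nesting $B_{(i+1)+} \subset B_{i-}$ reduces to $V_{i+2}^{j_{i+1}+8} V_{i+1}^2 V_{i+2}^{j_{i+1}+8} \subset V_{i+1}^{10} \subset V_{i+1}^{j_i} V_i^2 V_{i+1}^{j_i}$ (using $V_{i+2} \subset V_{i+1}^4$, and requiring each $j_i$ at least a small absolute constant). Solving the recurrence $c_{i+1} = O(\epsilon^{-2} c_i^4)$ for $c_i := \log(2\mu_G(V_i)^{-1})$ then yields the stated density $c_{r+1} = O((\epsilon^{-2} \log 2\delta^{-1})^{4^{r+1}})$.

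The main obstacle I foresee is the tension between the pigeonhole window (which wants $k_i$ large) and the density bound in Lemma \ref{lem.nab} (which degrades quadratically in $k_i$). These balance precisely because the pigeonhole is applied to $|V_{i+1}^n V_i^2 V_{i+1}^n|$ rather than to $|V_{i+1}^n|$, pinning the log-range to $V_i$ rather than $V_{i+1}$ and thereby breaking the apparent circularity. A secondary concern is that $B_{0+}$ naturally sits in $V_0^{10}$, not $X^4$, which is why the preliminary shrinking step to define $V_0$ is needed.
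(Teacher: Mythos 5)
Your construction is essentially the paper's: iterate Lemma \ref{lem.nab} to get a nested chain of symmetric neighbourhoods, pigeonhole on the measures of the sandwiched products $V_{i+1}^{n}V_i^{?}V_{i+1}^{n}$ (whose total logarithmic range is controlled by $\mu_G(V_i)$, not $\mu_G(V_{i+1})$ --- exactly the observation that breaks the circularity you mention), and pad with extra powers of $V_{i+1}$ to manufacture the triple $B_{i-}\subset B_i\subset B_{i+}$.

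There is, however, a quantitative slip in your closure verification that makes the axiom fail as written. Your buffers between consecutive members of the triple have width $V_{i+1}^4$ on each side, but the set $B_{i+1}=V_{i+2}^{j_{i+1}+4}V_{i+1}^2V_{i+2}^{j_{i+1}+4}$ is only guaranteed to lie in $V_{i+1}^4V_{i+1}^2V_{i+1}^4=V_{i+1}^{10}$, not in $V_{i+1}^4$. Hence for $x,y\in B_{i+1}$ one only gets
\begin{equation*}
yB_ix \subset V_{i+1}^{10}\,V_{i+1}^{j_i+4}V_i^2V_{i+1}^{j_i+4}\,V_{i+1}^{10}=V_{i+1}^{j_i+14}V_i^2V_{i+1}^{j_i+14},
\end{equation*}
which is not contained in $B_{i+}=V_{i+1}^{j_i+8}V_i^2V_{i+1}^{j_i+8}$; in particular the claim that $B_{i+1}B_iB_{i+1}\subset B_{i+}$ ``is actually equality by construction'' is false, and the dual lower closure fails for the same reason. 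The repair is purely a matter of constants: the buffer width must dominate the exponent $e$ for which you can prove $B_{i+1}\subset V_{i+1}^{e}$, so take buffers of width $10$ (hence pigeonhole in windows of width $20$, i.e.\ on $a_{j},a_{j+10},a_{j+20}$). This is precisely why the paper's proof uses the exponents $9=4+1+4$ and $18$: there $B_{(i+1)\pm}\subset S_{i+1}^{9}$ by induction and the buffers are $S_{i+1}^{9}$ on each side. With that adjustment (and your already-noted requirement that each $j_i$ exceed a small absolute constant, to secure $B_{(i+1)+}\subset B_{i-}$ and $S^4\subset B_{r+1}\subset B_{r-}$) the argument goes through and the density recursion is unaffected up to constants absorbed by the $O(\cdot)$.
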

\begin{proof}
First apply Lemma \ref{lem.nab} to get a symmetric neighbourhood of the identity $S_0$ such that
\begin{equation*}
S_0^9 \subset X^4 \text{ and } \mu_G(S_0) \geq \exp(-O(\log^{2}2\delta^{-1})).
\end{equation*}
We shall now proceed inductively to construct sequences $((B_{l+},B_l,B_{l-}))_{l=0}^{r}$ and $(S_l)_{l=0}^{r+1}$ with
\begin{equation*}
S_{i+1}^9 \subset B_{i-}, \subset B_{i+} \subset S_i^9 \text{ and } B_{i-} \subset xB_iy\subset B_{i+} \text{ for all }x,y \in S_{i+1}^9,
\end{equation*}
and
\begin{equation*}
\frac{1}{1+\epsilon}\mu_G(B_{i+})\leq \mu_G(B_i)\leq (1+\epsilon)\mu_G(B_{i-}),
\end{equation*}
and, writing $\delta_i:=\mu_G(S_i)$, such that
\begin{equation*}
\delta_{i+1}\geq \exp(-O(\epsilon^{-2}\log^42\delta_i^{-1})).
\end{equation*}
Suppose we have constructed $S_i$, but not $B_{i+},B_i,B_{i-}$ or $S_{i+1}$.  Apply Lemma \ref{lem.nab} to the set $S_i$ with a natural $l_i$ to be chosen later to get a symmetric neighbourhood of the identity $S_{i+1}$ such that
\begin{equation*}
S_{i+1}^{l_i} \subset S_i^4 \text{ and } \delta_{i+1} \geq \exp(-O(l_i^2\log^22\delta_i^{-1})).
\end{equation*}
Now
\begin{equation*}
\mu_G(S_{i+1}^{l_i}S_i S_{i+1}^{l_i}) \leq 1= \delta_i^{-1}\mu_G(S_i),
\end{equation*}
and so, by telescoping products,
\begin{equation*}
\prod_{j=1}^{\lfloor l_i/18\rfloor-1 }{\frac{\mu_G(S_{i+1}^{18(j+1)}S_i S_{i+1}^{18(j+1)})}{\mu_G(S_{i+1}^{18j}S_i S_{i+1}^{18j})}} \leq \delta_i^{-1}.
\end{equation*}
Thus we can take $l_i = O(\epsilon^{-1}\log 2\delta_i^{-1})$ so that there is some $0<j <\lfloor l_i/18\rfloor$ with
\begin{equation*}
\mu_G(S_{i+1}^{18} (S_{i+1}^{18j}S_i S_{i+1}^{18j})S_{i+1}^{18}) \leq (1+\epsilon)\mu_G(S_{i+1}^{18j}S_i S_{i+1}^{18j}).
\end{equation*}
We put
\begin{equation*}
B_{i-}:=S_{i+1}^{18j}S_i S_{i+1}^{18j}, B_i:=S_{i+1}^9B_{i-}S_{i+1}^9, \text{ and }B_{i+}:=S_{i+1}^9B_{i}S_{i+1}^9
\end{equation*}
and note that $B_{i+}$, $B_i$ and $B_{i-}$ are all symmetric neighbourhoods of the identity and
\begin{equation*}
B_{i-} \subset xB_iy\subset B_{i+} \text{ for all }x,y \in S_{i+1}^9.
\end{equation*}
Furthermore we have $\mu_G(B_{i+})\leq (1+\epsilon)\mu_G(B_{i-})$ and so
\begin{equation*}
\frac{1}{1+\epsilon}\mu_G(B_{i+})\leq \mu_G(B_i)\leq (1+\epsilon)\mu_G(B_{i-}).
\end{equation*}
Finally
\begin{equation*}
S_{i+1}^9 \subset B_{i-} \text{ and }B_{i+} \subset S_{i+1}^{18j+18}S_iS_{i+1}^{18j+18} \subset S_{i+1}^{l_i}S_iS_{i+1}^{l_i} \subset S_i^9,
\end{equation*}
and
\begin{equation*}
\delta_{i+1} \geq  \exp(-O(\epsilon^{-2}\log^42\delta_i^{-1})).
\end{equation*}
We complete the construction by putting $S:=S_{r+1}$ and $B_{r+1}:=S^4$, and initialise the construction with $S_0$ as described above.  The result follows on working out the bounds for $\delta_{r+1}$.
\end{proof}
It is possible to relate $X$ to a much more rigid structure called a coset nilprogression.  These are somewhat complicated to define and the interested reader is directed to the paper \cite{bregretao::} Breuillard, Green and Tao, where this and related matters are addressed although at the expense of bounds.

At this stage we have established the results we shall need for the generation of suitable multiplicative systems and we can turn to tools for using them.  First note the simple observation that conjugation preserves multiplicative systems.
\begin{lemma}[Conjugation of multiplicative systems]
Suppose that $\mathcal{B}$ is an $(r+1)$-step $\epsilon$-closed multiplicative system and $g \in G$.  Then
\begin{equation*}
(gB_{0+}g^{-1},gB_0g^{-1},gB_{0-}g^{-1};\dots;gB_{r+}g^{-1},gB_rg^{-1},gB_{r-}g^{-1};gB_{r+1}g^{-1})
\end{equation*}
is also an $(r+1)$-step $\epsilon$-closed multiplicative system.
\end{lemma}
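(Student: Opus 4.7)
The proof is a routine verification: I would simply check each of the three defining axioms for the conjugated tuple, noting that conjugation by $g$ is a bijection of $G$ onto itself that preserves the identity, inverses, products, and cardinality.

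For the symmetric neighbourhood axiom, I would observe that $g1_Gg^{-1} = 1_G \in gB_ig^{-1}$, and if $x = gbg^{-1}$ with $b \in B_i$, then $x^{-1} = gb^{-1}g^{-1} \in gB_ig^{-1}$ since $B_i$ is closed under inversion; the same argument handles $B_{i\pm}$ and $B_{r+1}$. The nesting axiom is immediate from the trivial fact that $A \subset B$ implies $gAg^{-1} \subset gBg^{-1}$.

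The closure axiom is the only step with any content, but it too is a direct translation. Given $x, y \in gB_{i+1}g^{-1}$, write $x = gag^{-1}$ and $y = gbg^{-1}$ with $a, b \in B_{i+1}$. Then
\begin{equation*}
y(gB_ig^{-1})x = (gbg^{-1})(gB_ig^{-1})(gag^{-1}) = g(bB_ia)g^{-1},
\end{equation*}
and the closure axiom for $\mathcal{B}$ gives $B_{i-} \subset bB_ia \subset B_{i+}$, whence $gB_{i-}g^{-1} \subset y(gB_ig^{-1})x \subset gB_{i+}g^{-1}$. The required size estimates hold because conjugation is a bijection, so $|gB_{i\pm}g^{-1}| = |B_{i\pm}|$ and $|gB_ig^{-1}| = |B_i|$, and the inequalities transfer verbatim from $\mathcal{B}$.

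There is no real obstacle here; the lemma is essentially a sanity check that the definition of multiplicative system is intrinsic to the group structure and respects inner automorphisms, which will presumably be useful later when the argument needs to translate systems around the group by conjugation.
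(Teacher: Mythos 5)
Your proof is correct and is exactly the routine verification intended: the paper states this lemma as a "simple observation" and gives no proof at all, so your direct check of the three axioms (using that conjugation is a bijective automorphism, so inclusions, identities, inverses, products, and cardinalities all transfer) is precisely what is expected.
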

As will be clear from what we have already written, we shall find our multiplicative systems inside four-fold product sets.  The following lemma will help us combine this with conjugation.
\begin{lemma}\label{lem.int}
Suppose that $S\subset G$ is a symmetric set with density $\sigma:=\mu_G(S)>0$ and $g,h \in G$.  Then there is a symmetric neighbourhood of the identity, $X$, such that
\begin{equation*}
X^4 \subset (gS^4g^{-1})\cap (hS^4h^{-1}) \text{ and }\mu_G(X) \geq \exp(-O(\log^22\sigma^{-1})).
\end{equation*}
\end{lemma}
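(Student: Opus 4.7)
The plan is to adapt the proof of Lemma \ref{lem.nab}, running its underlying Croot-Sisask argument in a manner that handles both conjugates $gSg^{-1}$ and $hSh^{-1}$ simultaneously.  By replacing $S$ by $S \cup \{1_G\}$ if necessary (which changes $\sigma$ by at most $1/|G|$, absorbed into the final constant) we may assume $S$ is a symmetric neighbourhood of the identity; then $gSg^{-1}$ and $hSh^{-1}$ are both symmetric neighbourhoods of the identity of density at least $\sigma$.

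The naive approach of applying Lemma \ref{lem.nab} separately to each conjugate (with $k=4$) yields symmetric neighbourhoods of the identity $T_1,T_2$ with $T_1^4 \subset gS^4g^{-1}$, $T_2^4 \subset hS^4h^{-1}$ and $\mu_G(T_i) \geq \exp(-O(\log^2 2\sigma^{-1}))$, and then takes $X:=T_1 \cap T_2$; but the intersection can in general collapse to the identity.  Instead, I would unpack the probabilistic proof of Lemma \ref{lem.cs}: one samples $\vec x=(x_1,\dots,x_m)$ uniformly from the input set and shows that $\frac{1}{m}\sum_i \rho_{x_i^{-1}}(f)$ is close in $L_p(\mu_G)$ to $f\ast \mu_X$ with high probability, whence the almost-period set is extracted as a deterministic function of $\vec x$.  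Sampling $\vec x_1 \in (gSg^{-1})^m$ and $\vec x_2 \in (hSh^{-1})^m$ independently, a union-bound over the two high-probability events produces samples that are simultaneously good for both pairs $(1_{(gSg^{-1})^2}, gSg^{-1})$ and $(1_{(hSh^{-1})^2}, hSh^{-1})$, and a first-moment computation over the random samples shows that the intersection of the two resulting almost-period sets has expected density at least the product of the individual densities, which is still $\exp(-O(\log^2 2\sigma^{-1}))$.

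Given such a common almost-period set $T$, the triangle-inequality, H\"older, and positivity-of-convolution steps in the proof of Lemma \ref{lem.nab} apply \emph{separately} to each conjugate, yielding $T^k \subset gS^4g^{-1}$ and $T^k \subset hS^4h^{-1}$ for some bounded constant $k$.  Taking $X$ to be a symmetric neighbourhood of the identity inside $T$ then completes the argument.

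The main obstacle is executing the joint sampling without squaring the exponent in the final bound: simply intersecting two independent applications of Lemma \ref{lem.cs} gives the right density by a first-moment pigeonhole, but to keep all constants absorbed in the $O(\cdot)$ one really wants a combined statement of Croot-Sisask producing one set of almost-periods for both pairs at once, together with a careful union-bound.  I expect this is the only technical subtlety; once the combined almost-period set has the right density, the rest of the argument is a direct adaptation of Lemma \ref{lem.nab}.
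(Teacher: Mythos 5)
There is a genuine gap, and it sits exactly at the step you flag as the ``only technical subtlety''.  In the Croot--Sisask argument the randomness is only an existence device: one shows that the set $\mathcal{L}$ of good samples has measure at least $3/4$, and the almost-period set $T$ is then a \emph{deterministic} function of the input pair (the set of $t$ with $\widetilde{1_{\mathcal{L}}}\ast\mu_{\mathcal{L}}(t,\dots,t)>0$).  Consequently, sampling $\vec{x}_1$ and $\vec{x}_2$ independently buys nothing: the two almost-period sets you end up with do not depend on the samples, so ``the intersection has expected density at least the product of the densities'' is not a first-moment computation about anything random --- it is just the assertion that two fixed dense sets intersect densely, which is precisely the failure mode you correctly identified for the naive approach.  (Independence of the samples gives $\E\,\mu_G(T_1\cap T_2)=\E_t\bigl[\Pr[t\in T_1]\Pr[t\in T_2]\bigr]$, and the two probabilities, viewed as functions of $t$, can be supported on disjoint sets.)  Worse, by conjugation-equivariance of the whole construction the two almost-period sets are essentially $gTg^{-1}$ and $hTh^{-1}$ for a single $T$ built from $S$, so even a correctly executed joint sampling reduces to showing that two conjugates of one dense symmetric neighbourhood of the identity intersect in a dense set.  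That statement is the real content of the lemma and is never established in the proposal.

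The paper proves exactly that statement by a short positivity argument, with no second appeal to Croot--Sisask.  Apply Lemma \ref{lem.nab} once, to $S$ itself, to get a symmetric neighbourhood of the identity $R$ with $R^8\subset S^4$ and $\mu_G(R)\geq\exp(-O(\log^2 2\sigma^{-1}))$.  Since $1_{gR}\ast\widetilde{1_{gR}}$ is bounded by $\mu_G(R)$ and supported on $gR^2g^{-1}$ (and likewise for $h$), we have
\begin{equation*}
\mu_G(gR^2g^{-1}\cap hR^2h^{-1})\,\mu_G(R)^2\geq\langle 1_{gR}\ast\widetilde{1_{gR}},1_{hR}\ast\widetilde{1_{hR}}\rangle_{L_2(\mu_G)}=\|\widetilde{1_{hR}}\ast 1_{gR}\|_{L_2(\mu_G)}^2\geq\|\widetilde{1_{hR}}\ast 1_{gR}\|_{L_1(\mu_G)}^2=\mu_G(R)^4,
\end{equation*}
so $X:=gR^2g^{-1}\cap hR^2h^{-1}$ is a symmetric neighbourhood of the identity of density at least $\mu_G(R)^2$ with $X^4\subset gR^8g^{-1}\cap hR^8h^{-1}\subset gS^4g^{-1}\cap hS^4h^{-1}$.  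The crucial point is that both conjugates come from the \emph{same} dense set $R$, which is what lets Cauchy--Schwarz force a large intersection for arbitrary $g$ and $h$; independent resampling inside the two conjugates separately cannot substitute for this.
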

\begin{proof}
We apply Lemma \ref{lem.nab} to get a symmetric neighbourhood of the identity $R$, such that $R^8 \subset S^4$, and $\mu_G(R) \geq \exp(-O(\log^2 2\sigma^{-1}))$.  Now note that
\begin{eqnarray*}
\mu_G((gR^2g^{-1})\cap (hR^2h^{-1})) \mu_G(R)^2 & \geq & \langle 1_{gR} \ast \widetilde{1_{gR}},1_{hR} \ast \widetilde{1_{hR}}\rangle_{L_2(\mu_G)}\\ & = & \|\widetilde{1_{hR}}\ast 1_{gR}\|_{L_2(\mu_G)}^2\\ & \geq & \|\widetilde{1_{hR}}\ast 1_{gR}\|_{L_1(\mu_G)}^2\\ & =& (\mu_G(hR)\mu_G(gR))^2 = \mu_G(R)^4,
\end{eqnarray*}
and so
\begin{equation*}
\mu_G((gR^2g^{-1})\cap (hR^2h^{-1})) \geq \mu_G(R)^2 = \exp(-O(\log^2 2\sigma^{-1})).
\end{equation*}
On the other hand
\begin{equation*}
(gS^4g^{-1})\cap (hS^4h^{-1}) \supset (gR^8g^{-1})\cap (hR^8h^{-1}) \supset (gR^2g^{-1} \cap hR^2h^{-1})^4,
\end{equation*}
and $(gR^2g^{-1})\cap (hR^2h^{-1})$ is also a symmetric neighbourhood of the identity.  The result follows on letting $X$ be this set.
\end{proof}

At this point we turn to analysis on multiplicative systems.  Just as analysis on finite groups begins with Haar measure -- the unique translation invariant probability measure on $G$ -- analysis on multiplicative systems begins with an approximate version of this.
\begin{lemma}[Approximate right invariant Haar measure]\label{lem.rhaar}
Suppose that $\mathcal{B}$ is an $(r+1)$-step $\epsilon$-closed multiplicative system.  Then
\begin{equation*}
\|\rho_{x^{-1}}(\mu_{B_i}) - \mu_{B_i}\| = O(\epsilon) \text{ for all } x \in B_{i+1},
\end{equation*}
where $0 \leq i \leq r$.
\end{lemma}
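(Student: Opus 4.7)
The plan is to unwind the definitions and reduce the total-variation norm to a symmetric difference estimate that is controlled by the size axioms of the multiplicative system.

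First I would observe that $\rho_{x^{-1}}(\mu_{B_i})$ is simply the uniform probability measure on the translated set $B_ix$. Indeed, by the definition of $\rho_x$ on measures, for any set $A$ we have
\begin{equation*}
\rho_{x^{-1}}(\mu_{B_i})(A) = \mu_{B_i}(Ax^{-1}) = \frac{|B_i \cap Ax^{-1}|}{|B_i|} = \frac{|B_ix \cap A|}{|B_ix|},
\end{equation*}
so $\rho_{x^{-1}}(\mu_{B_i}) = \mu_{B_ix}$. Consequently the total-variation norm can be written as
\begin{equation*}
\|\rho_{x^{-1}}(\mu_{B_i}) - \mu_{B_i}\| = \frac{1}{|B_i|}\sum_{g \in G}|1_{B_ix}(g) - 1_{B_i}(g)| = \frac{|B_i \triangle B_ix|}{|B_i|}.
\end{equation*}

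Next I would apply the closure axiom with $y = 1_G \in B_{i+1}$ and the given $x \in B_{i+1}$ to obtain $B_{i-} \subset B_ix \subset B_{i+}$; the same closure applied with $x = y = 1_G$ yields $B_{i-} \subset B_i \subset B_{i+}$. Hence both $B_i$ and $B_ix$ sit between $B_{i-}$ and $B_{i+}$, and in particular their symmetric difference is sandwiched in the annulus $B_{i+} \setminus B_{i-}$. This gives the pointwise bound
\begin{equation*}
|B_i \triangle B_ix| \leq |B_{i+}| - |B_{i-}|.
\end{equation*}

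Finally, the size estimates $|B_{i+}| \leq (1+\epsilon)|B_i|$ and $|B_i| \leq (1+\epsilon)|B_{i-}|$ combine to give
\begin{equation*}
|B_{i+}| - |B_{i-}| \leq \left((1+\epsilon) - \frac{1}{1+\epsilon}\right)|B_i| = \frac{2\epsilon + \epsilon^2}{1+\epsilon}|B_i| = O(\epsilon)|B_i|,
\end{equation*}
and dividing through by $|B_i|$ completes the proof. There is no real obstacle here; the argument is essentially a sanity check confirming that the closure and size axioms were precisely engineered so that translation by elements of the next level of the system perturbs the uniform measure only by $O(\epsilon)$ in total variation.
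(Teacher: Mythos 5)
Your proof is correct and takes essentially the same route as the paper: identify $\rho_{x^{-1}}(\mu_{B_i})=\mu_{B_ix}$, express the total variation norm as the normalised symmetric difference $|B_i\triangle B_ix|/|B_i|$, and bound that via the closure axiom. The only cosmetic difference is that you sandwich both $B_i$ and $B_ix$ inside the annulus $B_{i+}\setminus B_{i-}$, while the paper bounds each piece of the symmetric difference using only the lower envelope (via the inclusions $xB_{i-}\subset B_ix$ and $B_{i-}x^{-1}\subset B_i$), which yields the marginally sharper constant $2\epsilon/(1+\epsilon)$ but the same $O(\epsilon)$ conclusion.
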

\begin{proof}
Just note that
\begin{eqnarray*}
\|\rho_{x^{-1}}(\mu_{B_i}) - \mu_{B_i}\|  &= & \mu_{B_i}(B_i \setminus B_ix) + \mu_{B_ix}(B_ix \setminus B_i)\\ & \leq & \mu_{B_i}(B_i \setminus xB_{i-}) + \mu_{B_i}(B_i \setminus B_{i-}x^{-1})\\ & \leq & 2-2\mu_{B_i}(B_{i-}) = O(\epsilon).
\end{eqnarray*}
The result is proved.
\end{proof}
We shall need a number of results in both a left and right hand form.  Typically we shall prove the right hand version and simply state the left hand version after it, the proof being essentially the same.
\begin{lemma}[Approximate left invariant Haar measure]
Suppose that $\mathcal{B}$ is an $(r+1)$-step $\epsilon$-closed multiplicative system.  Then
\begin{equation*}
\|\lambda_x(\mu_{B_i}) - \mu_{B_i}\| = O(\epsilon) \text{ for all } x \in B_{i+1}
\end{equation*}
where $0 \leq i \leq r$.
\end{lemma}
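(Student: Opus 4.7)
The plan is to mimic the proof of Lemma~\ref{lem.rhaar} verbatim, substituting $\lambda_x$ for $\rho_{x^{-1}}$ throughout.  The first step is to identify what $\lambda_x(\mu_{B_i})$ actually is: the convention $\lambda_x(\mu)(A) = \mu(xA)$ identifies it with the uniform probability measure on $x^{-1}B_i$ (since $|xA \cap B_i| = |A \cap x^{-1}B_i|$).  The total variation distance then splits as
\begin{equation*}
\|\lambda_x(\mu_{B_i}) - \mu_{B_i}\| = \mu_{B_i}(B_i \setminus x^{-1}B_i) + \mu_{x^{-1}B_i}(x^{-1}B_i \setminus B_i),
\end{equation*}
exactly as in the right-invariant case.

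The second step is to invoke the closure axiom $B_{i-} \subset yB_iz \subset B_{i+}$ (for $y,z \in B_{i+1}$) with $y = x^{-1}$, which lies in $B_{i+1}$ by symmetry, and $z = 1_G$.  This gives $B_{i-} \subset x^{-1}B_i \subset B_{i+}$.  Combined with the nesting $B_{i-} \subset B_i \subset B_{i+}$, each of the two symmetric-difference sets above is contained in $B_{i+} \setminus B_{i-}$, whose cardinality is $O(\epsilon)|B_i|$ by the closure estimates $|B_{i+}| \leq (1+\epsilon)|B_i|$ and $|B_i| \leq (1+\epsilon)|B_{i-}|$.  Dividing by $|B_i|$ bounds both contributions by $O(\epsilon)$.

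There is no substantive obstacle here, and this is exactly the situation the paper alludes to when it says that the two variants have ``essentially the same'' proof.  The definition of a multiplicative system is deliberately set up so that left and right play symmetric roles: the closure axiom quantifies over elements multiplying $B_i$ on both sides, and $B_{i+1}$ is symmetric under inversion.  Between these two features one can always trade a right-multiplication argument for its left-multiplication counterpart at no cost, so I do not anticipate needing any new ideas beyond those already present in Lemma~\ref{lem.rhaar}.
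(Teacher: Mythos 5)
Your proof is correct and is exactly the mirror-image argument the paper intends: it identifies $\lambda_x(\mu_{B_i})$ with the uniform measure on $x^{-1}B_i$, invokes the closure axiom with $y=x^{-1}\in B_{i+1}$ and $z=1_G$, and bounds both parts of the total variation by $\mu_{B_i}$-measure of $B_{i+}\setminus B_{i-}$, just as in Lemma~\ref{lem.rhaar}. Nothing further is needed.
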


Our arguments will involve finding (smaller and smaller) multiplicative systems on which our original set has larger and larger density.  There are various different ways of doing this in Abelian groups and many of them have analogues in our setting.  We shall use the energy increment technique developed by Heath-Brown and Szemer{\'e}di in \cite{hea::} and \cite{sze::2} and record an appropriate version now.
\begin{lemma}\label{lem.l2right}
Suppose that $\mathcal{B}$ is a $1$-step $\epsilon$-closed multiplicative system; $A \subset Z:=gB_0$ has density $\alpha:=\mu_Z(A)>0$; and
\begin{equation*}
\|1_A \ast \mu_{B_1}-\alpha 1_Z\|_{L_2(\mu_Z)}^2 \geq \eta \alpha^2.
\end{equation*}
Then there is some $z \in Z$ such that
\begin{equation*}
\mu_{zB_1}(A)=1_A \ast \mu_{B_1}(z)  \geq \alpha(1+\eta) - O(\epsilon).
\end{equation*}
\end{lemma}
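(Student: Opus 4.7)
The plan is to run the standard $L^2$ density increment: exploit that the average of $1_A \ast \mu_{B_1}$ over $Z$ is close to $\alpha$, deduce that the second moment of this convolution over $Z$ beats $\alpha^2$ by roughly $\eta \alpha^2$, and then use the pointwise-average-beats-$L^2$ trick $\|h\|_{L_2(\mu_Z)}^2 \leq (\sup_Z h) \cdot \langle h,1_Z\rangle_{L_2(\mu_Z)}$ to extract a point where $h := 1_A \ast \mu_{B_1}$ is large. Since $1_A \ast \mu_{B_1}(z) = \mu_{zB_1^{-1}}(A) = \mu_{zB_1}(A)$ by symmetry of $B_1$, such a point $z$ is exactly what the lemma demands.

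The first step is to compute $\langle 1_A \ast \mu_{B_1}, 1_Z\rangle_{L_2(\mu_Z)}$. Unfolding and swapping sums, this equals $\E_{b \in B_1}\mu_Z(Ab)$ (up to trivial bookkeeping using the symmetry of $B_1$). For any $b \in B_1$ we have $Zb^{-1} = gB_0b^{-1} \supset gB_{0-}$ by the closure axiom applied to the $1$-step system, so $|A \cap Zb^{-1}| \geq |A| - (|B_0|-|B_{0-}|) \geq (\alpha - \epsilon)|Z|$ using $|B_0| \leq (1+\epsilon)|B_{0-}|$. Combined with the trivial $|Ab \cap Z| \leq |A|$, this yields
\begin{equation*}
\langle 1_A \ast \mu_{B_1}, 1_Z\rangle_{L_2(\mu_Z)} = \alpha + O(\epsilon).
\end{equation*}

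Next I expand the hypothesis: writing $h := 1_A \ast \mu_{B_1}$,
\begin{equation*}
\eta \alpha^2 \leq \|h - \alpha 1_Z\|_{L_2(\mu_Z)}^2 = \|h\|_{L_2(\mu_Z)}^2 - 2\alpha \langle h, 1_Z\rangle_{L_2(\mu_Z)} + \alpha^2,
\end{equation*}
so substituting the estimate from step one gives $\|h\|_{L_2(\mu_Z)}^2 \geq \alpha^2(1+\eta) - O(\alpha \epsilon)$. Then
\begin{equation*}
\sup_{z \in Z} h(z) \geq \frac{\|h\|_{L_2(\mu_Z)}^2}{\langle h, 1_Z\rangle_{L_2(\mu_Z)}} \geq \frac{\alpha^2(1+\eta) - O(\alpha \epsilon)}{\alpha + O(\epsilon)} = \alpha(1+\eta) - O(\epsilon),
\end{equation*}
which is the required lower bound (dividing out an $\alpha \leq 1$ only worsens the error term by a constant factor).

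No real obstacle is expected: the only genuine computation is the first-moment estimate in step one, which is a direct application of the closure axiom $B_{0-} \subset B_0 b \subset B_{0+}$ for $b \in B_{1}$ together with the cardinality bound $|B_0| \leq (1+\epsilon)|B_{0-}|$. Everything else is the routine $L^2$ increment manipulation, with the pointwise bound coming from the general inequality $\|h\|_{L_2(\mu)}^2 \leq \|h\|_{L_\infty(\mu)} \|h\|_{L_1(\mu)}$ applied to the non-negative function $h$.
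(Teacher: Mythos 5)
Your proof is correct and takes essentially the same approach as the paper: bound the first moment $\langle 1_A\ast\mu_{B_1}, 1_Z\rangle_{L_2(\mu_Z)}$ by $\alpha+O(\epsilon)$ (the paper via Lemma~\ref{lem.rhaar} and $\|\mu_Z\ast\mu_{B_1}-\mu_Z\|=O(\epsilon)$, you directly from the closure axiom), expand the hypothesis to get $\|1_A\ast\mu_{B_1}\|_{L_2(\mu_Z)}^2\geq(1+\eta)\alpha^2-O(\epsilon\alpha)$, and then extract a large value via $\|h\|_{L_2}^2\leq(\sup h)\|h\|_{L_1}$ for the nonnegative $h=1_A\ast\mu_{B_1}$. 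The one small refinement in the paper's version is that it uses the exact inequality $\|1_A\ast\mu_{B_1}\|_{L_1(\mu_Z)}\leq\alpha$ (since $|Ab\cap Z|\leq|A|$ for each $b$) rather than your $\alpha+O(\epsilon)$, which keeps the final division by the $L_1$-norm clean and avoids the hand-wave at the end.
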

\begin{proof}
This is just a calculation.  First note that
\begin{eqnarray}
\nonumber \|1_A \ast \mu_{B_1}\|_{L_2(\mu_Z)}^2&=&\|1_A \ast \mu_{B_1}-\alpha 1_Z\|_{L_2(\mu_Z)}^2\\ \label{eqn.l2} & &  + \alpha \langle 1_A \ast \mu_{B_1},1_Z\rangle_{L_2(\mu_Z)} + \alpha \langle 1_Z,1_A \ast \mu_{B_1}\rangle_{L_2(\mu_Z)} - \alpha^2.
\end{eqnarray}
Now Lemma \ref{lem.rhaar} and the integral triangle inequality tell us that
\begin{eqnarray*}
\|\mu_Z \ast \mu_{B_1} - \mu_Z\| & = & \|\mu_{B_0} \ast \mu_{B_1} - \mu_{B_0}\| \\ & \leq & \int{\|\rho_{x^{-1}}(\mu_{B_0}) - \mu_{B_0}\|d\mu_{B_1}(x)}=O(\epsilon).
\end{eqnarray*}
Hence
\begin{equation*}
\langle 1_A \ast \mu_{B_1},1_Z\rangle_{L_2(\mu_Z)} = \langle 1_A \ast \mu_{B_1},\mu_Z\rangle = \langle 1_A, \mu_Z\ast \mu_{B_1}\rangle = \alpha+O(\epsilon),
\end{equation*}
and similarly
\begin{equation*}
\langle 1_Z,1_A \ast \mu_{B_1}\rangle_{L_2(\mu_Z)} = \alpha + O(\epsilon).
\end{equation*}
Inserting these into (\ref{eqn.l2}) we get
\begin{equation*}
\|1_A \ast \mu_{B_1}\|_{L_2(\mu_Z)}^2=\|1_A \ast \mu_{B_1}-\alpha 1_Z\|_{L_2(\mu_Z)}^2+\alpha^2+O(\epsilon \alpha) \geq (1+\eta)\alpha^2 - O(\epsilon \alpha)
\end{equation*}
On the other hand we have
\begin{equation*}
\|1_A \ast \mu_{B_1}\|_{L_2(\mu_Z)}^2\leq \|1_A \ast \mu_{B_1}\|_{L_\infty(\mu_Z)}\alpha,
\end{equation*}
by the triangle inequality and the result follows.
\end{proof}
In exactly the same way we have a left hand version of the above.
\begin{lemma}\label{lem.l2left}
Suppose that $\mathcal{B}$ is a $1$-step $\epsilon$-closed multiplicative system; $A \subset Z:=B_0h^{-1}$ has density $\alpha:=\mu_Z(A)>0$; and
\begin{equation*}
\|\mu_{B_1} \ast 1_A-\alpha 1_Z\|_{L_2(\mu_Z)}^2 \geq \eta \alpha^2.
\end{equation*}
Then there is some $z \in Z$ such that
\begin{equation*}
\mu_{B_1z}(A)= \mu_{B_1}\ast 1_A(z)  \geq \alpha(1+\eta) - O(\epsilon).
\end{equation*}
\end{lemma}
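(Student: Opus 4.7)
The plan is to mirror the proof of Lemma \ref{lem.l2right} with the left and right roles interchanged throughout. I would begin by expanding the squared norm in the same way:
\begin{align*}
\|\mu_{B_1} \ast 1_A\|_{L_2(\mu_Z)}^2 &= \|\mu_{B_1}\ast 1_A - \alpha 1_Z\|_{L_2(\mu_Z)}^2 \\
&\quad + \alpha\langle \mu_{B_1}\ast 1_A, 1_Z\rangle_{L_2(\mu_Z)} + \alpha \langle 1_Z, \mu_{B_1}\ast 1_A\rangle_{L_2(\mu_Z)} - \alpha^2,
\end{align*}
with the first summand bounded below by $\eta \alpha^2$ by hypothesis.

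Next I would establish the left-hand analogue $\|\mu_{B_1} \ast \mu_Z - \mu_Z\| = O(\epsilon)$. Writing $\mu_Z = \rho_{h^{-1}}(\mu_{B_0})$ and unwinding the definitions shows that $\mu_{B_1} \ast \mu_Z = \rho_{h^{-1}}(\mu_{B_1} \ast \mu_{B_0})$; since the total variation norm is invariant under right translation, it suffices to bound $\|\mu_{B_1} \ast \mu_{B_0} - \mu_{B_0}\|$. This I would dispatch by the integral triangle inequality together with the left-hand version of the approximate Haar measure lemma, obtaining
\begin{equation*}
\|\mu_{B_1}\ast \mu_{B_0}-\mu_{B_0}\| \leq \int{\|\lambda_x(\mu_{B_0}) - \mu_{B_0}\|\,d\mu_{B_1}(x)} = O(\epsilon).
\end{equation*}

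Then, using the adjoint identity $\langle \mu_{B_1} \ast f, \nu\rangle = \langle f, \widetilde{\mu_{B_1}} \ast \nu\rangle$ together with $\widetilde{\mu_{B_1}} = \mu_{B_1}$ (since $B_1$ is symmetric), I would obtain $\langle \mu_{B_1}\ast 1_A, 1_Z\rangle_{L_2(\mu_Z)} = \langle 1_A, \mu_{B_1} \ast \mu_Z\rangle = \alpha + O(\epsilon)$, and likewise for the conjugate pairing. Substituting these back into the expansion yields $\|\mu_{B_1}\ast 1_A\|_{L_2(\mu_Z)}^2 \geq (1+\eta)\alpha^2 - O(\epsilon \alpha)$, after which the trivial bound $\|\mu_{B_1}\ast 1_A\|_{L_2(\mu_Z)}^2 \leq \|\mu_{B_1}\ast 1_A\|_{L_\infty(\mu_Z)}\cdot\alpha$ gives the existence of a point $z$ with the claimed lower bound on $\mu_{B_1}\ast 1_A(z) = \mu_{B_1 z}(A)$. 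No step is substantially harder than its counterpart in the right-hand proof; the only mildly subtle point is keeping careful track of the side on which translation by $h^{-1}$ acts, which is why I would spell out $\mu_Z = \rho_{h^{-1}}(\mu_{B_0})$ explicitly.
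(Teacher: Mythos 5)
Your proposal is correct and follows exactly the mirror argument the paper intends — the paper itself simply asserts that Lemma~\ref{lem.l2left} holds ``in exactly the same way'' as Lemma~\ref{lem.l2right}, and your sketch fills that in faithfully, swapping $\lambda$ for $\rho$ and left for right throughout. The only blemish is a sign slip: with the paper's conventions $\rho_x(\mu_{B_0}) = \mu_{B_0 x^{-1}}$, so $\mu_Z = \rho_h(\mu_{B_0})$ rather than $\rho_{h^{-1}}(\mu_{B_0})$, but since the argument only invokes right-translation-invariance of the total variation norm this has no effect on the conclusion.
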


We shall ultimately be interested in examining some fairly thin subsets of multiplicative systems.  In Abelian groups the tool for doing this is Chang's lemma \cite{cha::0}; as we mentioned before, in non-Abelian groups the tool is the Croot-Sisask Lemma.  We shall actually need a version for functions on multiplicative systems, where the proof is a minor variant on that of Lemma \ref{lem.cs} with a few technicalities resulting from the fact that $\rho$ is not an isometry when restricted to multiplicative systems.
\begin{lemma}[The (right hand) Croot-Sisask Lemma for multiplicative systems]\label{lem.csrel}  Suppose that $\mathcal{B}$ is a $2$-step $\epsilon$-closed multiplicative system, $X$ is a symmetric neighbourhood of the identity and $X^8 \subset B_2$, $f \in L_\infty(\mu_{B_{0+}})$, $A \subset B_{0-}$ has density $\alpha:=\mu_{B_0}(A)>0$, and $\eta \in (0,1]$ and $p \in [2,\infty)$ are parameters.  Then there is a symmetric neighbourhood of the identity $T \subset X^2$ with
\begin{equation*}
\mu_G(T) \geq \exp(-O(\eta^{-2}p\log 2\alpha^{-1}))\mu_G(X)
\end{equation*}
such that
\begin{equation*}
\|\rho_{t^{-1}}(f \ast \mu_A) - f \ast \mu_A\|_{L_p(\mu_{B_1})} \leq \eta(1+O(\epsilon/p)) \|f\|_{L_\infty(\mu_{B_{0+}})}
\end{equation*}
for all $t \in T^4$.
\end{lemma}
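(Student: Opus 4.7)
The plan is to adapt the proof of the original Croot-Sisask Lemma (Lemma \ref{lem.cs}), tracking how $\rho_{t^{-1}}$ fails to be an isometry on $L_p(\mu_{B_1})$ via Lemma \ref{lem.rhaar}. Fix $k = \Theta(\eta^{-2}p)$ with a suitable implicit constant and, for $\vec a = (a_1,\ldots,a_k) \in A^k$ drawn uniformly, form the empirical average $F_{\vec a}(y) := k^{-1}\sum_{i=1}^k f(ya_i^{-1})$, which is pointwise unbiased for $f\ast\mu_A(y)$. The pointwise Marcinkiewicz--Zygmund inequality combined with Fubini in $L_p(\mu_{B_1})$ yields
\[
\E_{\vec a \in A^k}\|F_{\vec a} - f\ast\mu_A\|_{L_p(\mu_{B_1})}^p \leq (Cp/k)^{p/2}\|f\|_{L_\infty(\mu_{B_{0+}})}^p \leq (\eta/10)^p\|f\|_{L_\infty(\mu_{B_{0+}})}^p,
\]
and Markov then produces a set $L \subset A^k$ with $|L| \geq |A|^k/2$ on which $\|F_{\vec a} - f\ast\mu_A\|_{L_p(\mu_{B_1})} \leq (\eta/4)\|f\|_{L_\infty(\mu_{B_{0+}})}$.

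Writing $\vec a \cdot t := (a_1 t,\ldots,a_k t)$, the key identity $\rho_{t^{-1}}(F_{\vec a}) = F_{\vec a\cdot t}$ gives, whenever $\vec a$ and $\vec a \cdot t$ both lie in $L$,
\[
\rho_{t^{-1}}(f\ast\mu_A) - f\ast\mu_A = (F_{\vec a\cdot t} - f\ast\mu_A) - \rho_{t^{-1}}(F_{\vec a} - f\ast\mu_A).
\]
For $t \in B_2$, Lemma \ref{lem.rhaar} yields $\|\rho_t(\mu_{B_1}) - \mu_{B_1}\| = O(\epsilon)$, so $\|\rho_{t^{-1}}g\|_{L_p(\mu_{B_1})}^p \leq \|g\|_{L_p(\mu_{B_1})}^p + O(\epsilon)\|g\|_{L_\infty(\mu_{B_{0+}})}^p$ for any $g$ supported in $B_{0+}$; applying $(1+x)^{1/p} \leq 1 + x/p$ supplies the $(1+O(\epsilon/p))$ factor, and the triangle inequality combines the two estimates into the target norm bound.

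The remaining task is to produce a symmetric neighbourhood $T \subset X^2$ of the identity with $\mu_G(T) \geq \exp(-O(\eta^{-2}p\log 2\alpha^{-1}))\mu_G(X)$ and $T^4 \subset X^8 \subset B_2$, such that every $t \in T^4$ admits some $\vec a \in L$ with $\vec a\cdot t \in L$. I would do this by the standard Croot-Sisask pigeonhole, carried out \emph{relative} to the set $X$: partition tuples in $L$ by the $X$-coset of each coordinate in $B_0$, producing at most $\exp(O(k\log 2\alpha^{-1}))\mu_G(X)^{-k}$ classes; any two tuples in a dense class differ coordinate-wise by elements of $X X^{-1} = X^2$, and a further compression/averaging step extracts a single common right-shift $t \in X^2$. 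Symmetrisation in the spirit of Lemma \ref{lem.nab} then produces $T$ of the advertised density.

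The principal obstacle is this last step: arranging the pigeonhole so that the recovered common shifts genuinely land inside $X^2$ (this is what the hypothesis $X^8\subset B_2$ is tailored for), and extracting the density bound with the claimed $\log 2\alpha^{-1}$ dependence rather than something involving $\mu_G(X)$ in the logarithm. The first two steps are essentially bookkeeping once Lemma \ref{lem.rhaar} is in hand.
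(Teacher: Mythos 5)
Your first half --- the random sampling, the Marcinkiewicz--Zygmund bound with $k=\Theta(\eta^{-2}p)$, and the identity $\rho_{t^{-1}}(F_{\vec{a}})=F_{\vec{a}\cdot t}$ --- is exactly the paper's argument. The genuine gap is the construction of $T$, which you yourself flag as the principal obstacle but then sketch incorrectly. Partitioning $L$ by which translate of $X$ each coordinate lies in does not work: two tuples in the same class satisfy $b_i\in a_iX^2$ for each $i$, but with a \emph{different} element of $X^2$ in each coordinate, so no common right-shift $t$ is produced, and no amount of `compression/averaging' within a class recovers one. The paper's device is to act on $\mathcal{L}$ by the diagonal $\Delta:=\{(x,\dots,x):x\in X\}$ and apply Cauchy--Schwarz in the form $\|1_{\mathcal{L}}\ast\mu_\Delta\|_{L_2}^2\geq\|1_{\mathcal{L}}\ast\mu_\Delta\|_{L_1}^2$: this counts pairs $\vec{y},\vec{z}\in\mathcal{L}$ with $\vec{y}=\vec{z}\cdot t$ for a \emph{single} $t\in XX^{-1}=X^2$, and yields $\mu_G(T)\geq\frac{3}{4}\alpha^k\mu_G(X)$ directly, with no $\mu_G(X)$ appearing inside the exponential. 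Relatedly, your closing claim that every $t\in T^4$ admits some $\vec{a}\in L$ with $\vec{a}\cdot t\in L$ is not what this (or any such) construction delivers; the passage from $T$ to $T^4$ is a four-fold telescoping triangle inequality, writing $t=t_1t_2t_3t_4$ and paying the price of measuring against $\rho_x(\mu_{B_1})$ with $x\in T^3$ --- which is precisely where the hypothesis $T^4\subset X^8\subset B_2$ is consumed.

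There is also an error in the step you call bookkeeping. From $\|\rho_{t^{-1}}g\|_{L_p(\mu_{B_1})}^p\leq\|g\|_{L_p(\mu_{B_1})}^p+O(\epsilon)\|g\|_{L_\infty}^p$ you cannot extract a factor $(1+O(\epsilon/p))$: with $\|g\|_{L_p(\mu_{B_1})}\approx(\eta/4)\|f\|_{L_\infty}$ and $\|g\|_{L_\infty}\leq 2\|f\|_{L_\infty}$, the correction is of relative size $O(\epsilon)(8/\eta)^p$, which is enormous. The correct move is the pointwise domination $\rho_x(\mu_{B_1})\leq(1+O(\epsilon))\mu_{B_{1+}}$ for $x\in B_2$ (valid because $B_1x^{-1}\subset B_{1+}$ and $|B_{1+}|\leq(1+\epsilon)|B_1|$), which gives the multiplicative bound $\|g\|_{L_p(\rho_x(\mu_{B_1}))}\leq(1+O(\epsilon/p))\|g\|_{L_p(\mu_{B_{1+}})}$; this is why the paper defines $\mathcal{L}$ with respect to $\mu_{B_{1+}}$ rather than $\mu_{B_1}$.
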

\begin{proof}
Let $z_1,\dots,z_k$ be independent uniformly distributed $A$-valued random variables, and for each $y \in B_{1+}$ define $Z_i(y):=\rho_{z_i^{-1}}(f)(y) - f \ast \mu_A(y)$.  For fixed $y$, the variables $Z_i(y)$ are independent and have mean $0$, so it follows by the Marcinkiewicz-Zygmund inequality and then H{\"o}lder's inequality that
\begin{eqnarray*}
\| \sum_{i=1}^k{Z_i(y)}\|_{L^p(\mu_A^k)}^p &= &O(p)^{p/2}\int{\left(\sum_{i=1}^k{|Z_i(y)|^2}\right)^{p/2}d\mu_A^k}\\ & = & O(p)^{p/2}k^{p/2-1}\sum_{i=1}^k{\int{|Z_i(y)|^p}d\mu_A^k}.
\end{eqnarray*}
Integrating over $y\in B_{1+}$ and interchanging the order of summation we get
\begin{equation}\label{eqn.khin}
\int{\| \sum_{i=1}^k{Z_i(y)}\|_{L^p(\mu_A^k)}^pd\mu_{B_{1+}}(y)} = O(p)^{p/2}k^{p/2-1}\int{\sum_{i=1}^k{\int{|Z_i(y)|^pd\mu_{B_{1+}}(y)}}d\mu_{A}^k}.
\end{equation}
On the other hand,
\begin{eqnarray*}
\left(\int{|Z_i(y)|^pd\mu_{B_{1+}}(y)}\right)^{1/p} & = & \|Z_i\|_{L_p(\mu_{B_{1+}})}\\ & \leq & \|\rho_{z_i^{-1}}(f)\|_{L_p(\mu_{B_{1+}})} + \|f \ast \mu_A\|_{L_p(\mu_{B_{1+}})} \leq 2\|f\|_{L_\infty(\mu_{B_{0+}})}
\end{eqnarray*}
by the triangle inequality and the fact that $A\subset B_{0-}$.  Dividing (\ref{eqn.khin}) by $k^p$ and inserting the above and the expression for the $Z_i$s we get that
\begin{equation*}
\int{\int{\left|\frac{1}{k}\sum_{i=1}^k{\rho_{z_i^{-1}}(f)(y)} - f \ast \mu_A(y)\right|^pd\mu_{B_{1+}}(y)}d\mu_A^k(z)}=O(pk^{-1}\|f\|_{L_\infty(\mu_{B_{0+}})}^2)^{p/2}.
\end{equation*}
Pick $k=O(\eta^{-2}p)$ such that the right hand side is at most $(\eta \|f\|_{L_\infty(\mu_{B_{0+}})}/16)^p$ and write $\mathcal{L}$ for the set of $x \in A\times \dots \times A$ (where the product is $k$-fold) for which the integrand above is at most $(\eta \|f\|_{L_\infty(\mu_{B_{0+}})}/8)^p$.  By averaging $\mu_A^k(\mathcal{L}^c) \leq 2^{-p}$ and so $\mu_A^k(\mathcal{L}) \geq 1-2^{-p} \geq 3/4$.

Now, put $\Delta:=\{(x,\dots,x): x \in X\}$ and note (since $A\subset B_{0-}$ implies $\mathcal{L} \subset B_{0-}\times \dots \times B_{0-}$ and $\mathcal{L}+\Delta \subset B_0\times \dots \times B_0$ ) that
\begin{eqnarray*}
\langle \widetilde{1_{\mathcal{L}}}\ast \mu_{\mathcal{L}}, \mu_\Delta\ast \widetilde{\mu_\Delta}\rangle & = & \langle \mu_{\mathcal{L}}\ast \mu_\Delta, 1_{\mathcal{L}} \ast \mu_\Delta\rangle\\ & = & \|1_\mathcal{L} \ast \mu_{\Delta}\|_{L_2(\mu_{B_0}^k)}\mu_{B_0}^k(\mathcal{L})^{-1}\\ & \geq & \|1_{\mathcal{L}}\ast \mu_\Delta\|_{L_1(\mu_{B_0}^k)}^2\mu_{B_0}^k(\mathcal{L})^{-1} =\mu_{B_0}^k(\mathcal{L}) \geq \frac{3}{4}\alpha^{k}.
\end{eqnarray*}
We let $T \subset X^2$ be the set of $t$ such that $\widetilde{1_{\mathcal{L}}}\ast \mu_{\mathcal{L}}(t,\dots,t)>0$.  Then
\begin{equation*}
\mu_\Delta\ast \widetilde{\mu_\Delta}(\{(t,\dots,t):t \in T\}) \geq \frac{3}{4}\mu_{B_0}(A)^{k},
\end{equation*}
and so $\mu_{G}(T) \geq \frac{3}{4}\alpha^{k}\mu_G(X)$.

Now suppose that $t_1,t_2,t_3,t_4 \in T$.  Then for each $1 \leq j \leq 4$ there are elements $z(t_j),y(t_j) \in \mathcal{L}$ such that $y(t_j)_i=z(t_j)_it_j$ for all $1 \leq i \leq k$.  By the triangle inequality
\begin{eqnarray}
\label{eqn.f} \|\rho_{(t_1t_2t_3t_4)^{-1}}(f \ast \mu_A) - f \ast \mu_A\|_{L_p(\mu_{B_1})} & \leq &  \|\rho_{(t_2t_3t_4)^{-1}}(\rho_{t_1^{-1}}(f \ast \mu_A) - f \ast \mu_A)\|_{L_p(\mu_{B_1})}\\ \nonumber & + &  \|\rho_{(t_3t_4)^{-1}}(\rho_{t_2^{-1}}(f \ast \mu_A) - f \ast \mu_A)\|_{L_p(\mu_{B_1})}\\ \nonumber & + &  \|\rho_{t_4^{-1}}(\rho_{t_3^{-1}}(f \ast \mu_A) - f \ast \mu_A)\|_{L_p(\mu_{B_1})}\\ \nonumber & + &  \|\rho_{t_4^{-1}}(f \ast \mu_A) - f \ast \mu_A\|_{L_p(\mu_{B_1})}\\ \nonumber& \leq & \sup_{x \in T^3, t \in T}{\|\rho_{t^{-1}}(f \ast \mu_A) - f \ast \mu_A\|_{L_p(\rho_x(\mu_{B_1}))}}
\end{eqnarray}
Now, suppose that $x \in T^3$ and $t \in T$.  Then
\begin{eqnarray*}
\|\rho_{t^{-1}}(f \ast \mu_A) - f \ast \mu_A\|_{L_p(\rho_x(\mu_{B_1}))}& \leq &\|\rho_{t^{-1}}\left(\frac{1}{k}\sum_{i=1}^k{\rho_{z(t)_i^{-1}}(f)}\right) -  f \ast \mu_A\|_{L_p(\rho_x(\mu_{B_1}))}\\ &&+\|\rho_{t^{-1}}\left(\frac{1}{k}\sum_{i=1}^k{\rho_{z(t)_i^{-1}}(f)} - f \ast \mu_A \right)\|_{L_p(\rho_x(\mu_{B_1}))}\\ & = &\|\frac{1}{k}\sum_{i=1}^k{\rho_{y(t)_i^{-1}}(f)} -  f \ast \mu_A\|_{L_p(\rho_x(\mu_{B_1}))}\\ &&+\|\frac{1}{k}\sum_{i=1}^k{\rho_{z(t)_i^{-1}}(f)} - f \ast \mu_A \|_{L_p(\rho_{tx}(\mu_{B_1}))}.
\end{eqnarray*}
However, since $t \in T$ and $x\in T^3$ we have $x, tx \in T^4 \subset X^8 \subset B_2$, hence $\rho_x(\mu_{B_1}) \leq (1+O(\epsilon))\mu_{B_{1+}}$ and so
\begin{eqnarray*}
\|\rho_{t^{-1}}(f \ast \mu_A) - f \ast \mu_A\|_{L_p(\rho_x(\mu_{B_1}))}& \leq &(1+O(\epsilon/p))\left(\|\frac{1}{k}\sum_{i=1}^k{\rho_{y(t)_i^{-1}}(f)} -  f \ast \mu_A\|_{L_p(\mu_{B_{1+}})}\right.\\ &&\left.+\|\frac{1}{k}\sum_{i=1}^k{\rho_{z(t)_i^{-1}}(f)} - f \ast \mu_A \|_{L_p(\mu_{B_{1+}})}\right)\\ & \leq & (1+O(\epsilon/p))\eta\|f\|_{L_\infty(\mu_{B_{0+}})}/4.
\end{eqnarray*}
The last inequality is from the definition of $\mathcal{L}$.  Inserting this bound into (\ref{eqn.f}) gives us the required result.
\end{proof}
\begin{lemma}[The (left hand) Croot-Sisask Lemma for multiplicative systems]\label{lem.csrelleft}  Suppose that $\mathcal{B}$ is a $2$-step $\epsilon$-closed multiplicative system, $X$ is a symmetric neighbourhood of the identity and $X^8 \subset B_2$, $f \in L_\infty(\mu_{B_{0+}})$, $A \subset B_{0-}$ has density $\alpha:=\mu_{B_0}(A)>0$, and $\eta \in (0,1]$ and $p \in [2,\infty)$ are parameters.  Then there is a symmetric neighbourhood of the identity $T \subset X^2$ with
\begin{equation*}
\mu_G(T) \geq \exp(-O(\eta^{-2}p\log 2\alpha^{-1}))\mu_G(X)
\end{equation*}
such that
\begin{equation*}
\|\lambda_{t}(\mu_A\ast f) - \mu_A \ast f\|_{L_p(\mu_{B_1})} \leq \eta(1+O(\epsilon/p)) \|f\|_{L_\infty(\mu_{B_{0+}})}
\end{equation*}
for all $t \in T^4$.
\end{lemma}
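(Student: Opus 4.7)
The plan is to mirror the proof of Lemma \ref{lem.csrel} with left translations in place of right ones, relying on the identity $\mu_A \ast f = \E_{a \in A} \lambda_a(f)$. First I would let $z_1, \ldots, z_k$ be independent uniformly distributed $A$-valued random variables and define $Z_i(y) := \lambda_{z_i}(f)(y) - \mu_A \ast f(y)$. For each fixed $y \in B_{1+}$ these are mean-zero, independent, and bounded by $2\|f\|_{L_\infty(\mu_{B_{0+}})}$ by the same support reasoning as in the right-hand case. Applying the Marcinkiewicz--Zygmund and H\"older inequalities verbatim yields
\begin{equation*}
\int\int \Bigl|\frac{1}{k}\sum_{i=1}^k \lambda_{z_i}(f)(y) - \mu_A \ast f(y)\Bigr|^p d\mu_{B_{1+}}(y)\,d\mu_A^k(z) = O\bigl(pk^{-1}\|f\|_{L_\infty(\mu_{B_{0+}})}^2\bigr)^{p/2};
\end{equation*}
choosing $k = O(\eta^{-2}p)$, the set $\mathcal{L}$ of tuples $z$ for which the inner integral is at most $(\eta\|f\|_{L_\infty(\mu_{B_{0+}})}/8)^p$ has $\mu_A^k(\mathcal{L}) \geq 3/4$.

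To extract $T$, I would run the Cauchy--Schwarz computation with the order of convolution swapped. Putting $\Delta := \{(x,\ldots,x): x \in X\}$, one has $\langle \mu_\mathcal{L} \ast \widetilde{1_\mathcal{L}}, \widetilde{\mu_\Delta}\ast \mu_\Delta\rangle \geq \frac{3}{4}\alpha^k$ by the same rewriting via the adjoint relation, and defining $T \subset X^2$ by $\mu_\mathcal{L} \ast \widetilde{1_\mathcal{L}}(t,\ldots,t) > 0$ then gives $\mu_G(T) \geq \frac{3}{4}\alpha^k \mu_G(X) = \exp(-O(\eta^{-2}p\log 2\alpha^{-1}))\mu_G(X)$. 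The diagonal-support condition now produces for each $t \in T$ tuples $z(t), y(t) \in \mathcal{L}$ with $y(t)_i = t z(t)_i$ for all $i$ (rather than $y(t)_i = z(t)_i t$ as in the right-hand case), since the support of $\mu_\mathcal{L} \ast \widetilde{1_\mathcal{L}}$ is $\mathcal{L}\mathcal{L}^{-1}$.

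Finally, for $t_1, t_2, t_3, t_4 \in T$, the four-fold triangle inequality analogous to (\ref{eqn.f}) reduces the problem to bounding $\|\lambda_t(\mu_A \ast f) - \mu_A \ast f\|_{L_p(\lambda_x(\mu_{B_1}))}$ uniformly in $t \in T$, $x \in T^3$. For such $t, x$ we have $x, tx \in T^4 \subset X^8 \subset B_2$, so the closure axiom gives $xB_1, txB_1 \subset B_{1+}$ and hence $\lambda_x(\mu_{B_1}), \lambda_{tx}(\mu_{B_1}) \leq (1+O(\epsilon))\mu_{B_{1+}}$; the $p$-th root absorbs this into the $(1+O(\epsilon/p))$ factor. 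Combined with the identity $\lambda_t\bigl(\frac{1}{k}\sum_i \lambda_{z(t)_i}(f)\bigr) = \frac{1}{k}\sum_i \lambda_{y(t)_i}(f)$ and the defining bound of $\mathcal{L}$ applied to both $z(t)$ and $y(t)$, this bounds each triangle-inequality summand by $\eta\|f\|_{L_\infty(\mu_{B_{0+}})}/4$, giving the claim. The only real obstacle is the left-sided near-invariance $\lambda_x(\mu_{B_1}) \leq (1+O(\epsilon))\mu_{B_{1+}}$, but this is the direct mirror of the right-sided estimate already used and follows immediately from nesting together with the density ratio $|B_{1+}|/|B_1| \leq 1+\epsilon$.
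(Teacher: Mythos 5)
Your proposal correctly mirrors the proof of Lemma \ref{lem.csrel} under the left/right substitutions (replacing $\rho_{z^{-1}}(f)$ by $\lambda_z(f)$, $\widetilde{1_\mathcal{L}}\ast\mu_\mathcal{L}$ by $\mu_\mathcal{L}\ast\widetilde{1_\mathcal{L}}$, and the diagonal identity $y(t)_i = z(t)_i t$ by $y(t)_i = t z(t)_i$), and it carries over the $(1+O(\epsilon/p))$ bookkeeping via the left-translate comparison $\lambda_x(\mu_{B_1}) \leq (1+O(\epsilon))\mu_{B_{1+}}$ for $x \in B_2$. This is exactly the paper's intended proof, which it leaves implicit with the remark that the left-hand version is proved in the same way as the right-hand one.
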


\section{The iteration lemmas}\label{sec.u1}

Having set up the basic machinery in the previous section, this section is devoted to establishing the key iteration lemmas that are specific to the problem we are considering here.  All of the results will have the form of a dichotomy: either we shall find some sort of density increment on a new multiplicative system; or we shall be able to ensure some good behaviour. 

There are two key results, Proposition \ref{prop.u1} and Proposition \ref{prop.u2}.  These results correspond roughly to making some relative versions of the $U^1$-norm and $U^2$-norm small respectively.  The first of these is rather easy and in the Abelian case is essentially \cite[\S5]{bou::5}.  This (in the application of Lemma \ref{lem.squares} inside the proof of Proposition \ref{prop.u1}) is where the requirement that the elements of $A$ have distinct squares comes from.

To understand the argument it may be useful to consider a model example.  Meshulam's proof for Roth's Theorem in $\F_3^n$ \cite{mes::} can be viewed (somewhat anachronistically)  as a model version of Bourgain's proof of Roth's Theorem in $\Z$ \cite{bou::5} in which all the Bohr sets are assumed to be subgroups.  The same modelling assumption is useful here.

Suppose that $A \subset gHk^{-1}$ for some $H \leq G$ and elements $g,k \in G$ has size $\alpha |H|$.  Write $K:=gHg^{-1}\cap kHk^{-1}$.  It is possible to show by averaging that (either we have a density increment or) the set
\begin{equation*}
S:=\{a \in A: \mu_K\ast 1_{Agk^{-1}} (akg^{-1}) \approx \alpha \text{ and } 1_{gk^{-1}A}\ast \mu_K (gk^{-1}a) \approx \alpha\},
\end{equation*}
has $|S| \approx \alpha |H|$.  This is the application of Lemmas \ref{lem.equaliseright} and \ref{lem.equaliseleft} below.

By a slightly more complicated averaging argument (Lemma \ref{lem.squares}) there is some $a \in S$ such that
\begin{equation*}
sK=aK \text{ and }Ks=Ka \text{ for at least } \frac{|K|^2}{|KS||SK|}|S| \text{ elements } s \in S.
\end{equation*}
Since $|KS| \leq |H|$ and $|SK| \leq |H|$ it follows that
\begin{equation*}
|\{s^2:s \in S\} \cap aKa\}| \gtrsim \alpha \left(\frac{|K|}{|H|}\right) |K|,
\end{equation*}
and then we see that
\begin{equation*}
A_1:=Aa^{-1} \cap K, A_2 :=a^{-1}A \cap K \text{ and }T:=a^{-1}\{s^2: s \in A\}a^{-1}\cap K
\end{equation*}
have
\begin{equation*}
|A_1| \approx \alpha |K|, |A_2| \approx \alpha |K| \text{ and }|T| \gtrsim \alpha \left(\frac{|K|}{|H|}\right),
\end{equation*}
and we have an injection from triples $(a_1,a_2,t) \in A_1\times A_2\times T$ with $a_2a_1=t$ to triples $(a_1a,aa_2,ata) \in A \times A \times \{s^2:s \in A\}$ with $(aa_2)(a_1a)=ata$.

Counting triples $(a,b,c) \in A \times B\times C$ such that $ab=c$ is actually rather well understood in simple groups as was shown by Gowers in \cite{gow::2}, but in Abelian groups it leads to a dichotomy: either the count of triples is about right or else there is a structure (for us a multiplicative system) on which at least one of the sets has increased density.  This dichotomy also holds for general groups and is out Proposition \ref{prop.u2}.

The tool for proving the analogue of Proposition \ref{prop.u2} in the Abelian setting is the Fourier transform (and Chang's theorem) and for us here we require the Croot-Sisask Lemma (as mentioned in \S\ref{sec.ms}).
\begin{proposition}\label{prop.u1}
Suppose that $\mathcal{B}$ and $\mathcal{B}'$ are $1$-step $\epsilon$-closed multiplicative systems with $B'_0\subset gB_1g^{-1} \cap h B_1 h^{-1}$; $A \subset Z:=gB_0h^{-1}$ has $\alpha:=\mu_{Z}(A) >0$ (and distinct squares); $X$ is a symmetric neighbourhood of the identity with $\delta:=\mu_G(X)>0$ such that $X^4 \subset B_1'$; and $\eta \in (0,1]$ is a parameter.  Then
\begin{enumerate}
\item either there is some $a \in gB_0h^{-1}$ such that
\begin{equation*}
\mu_{aB_0'}(A), \mu_{B_0'a}(A) \geq \alpha(1-\eta)
\end{equation*}
and
\begin{equation*}
\mu_{aB_1'a}(\{s^2:s \in A\}) =\Omega(\alpha \delta^2);
\end{equation*}
\item or there is some $z \in gB_0h^{-1}$ such that
\begin{equation*}
\mu_{B_1'}\ast 1_A(z) \geq \alpha(1+\Omega(\eta^2)) - O(\epsilon\alpha^{-1});
\end{equation*}
\item or  there is some $z \in gB_0h^{-1}$ such that
\begin{equation*}
1_A\ast \mu_{B_1'}(z) \geq \alpha(1+\Omega(\eta^2)) - O(\epsilon\alpha^{-1}).
\end{equation*}
\end{enumerate}
\end{proposition}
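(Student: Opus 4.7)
The plan is a trichotomy driven by the energy-increment Lemmas \ref{lem.l2right} and \ref{lem.l2left}. First I would manufacture an auxiliary $1$-step $\epsilon$-closed multiplicative system $\tilde{\mathcal{B}}$ adapted to $Z$ by setting $\tilde B_0 := Z$, $\tilde B_{0\pm} := gB_{0\pm}h^{-1}$, and $\tilde B_1 := B_1'$. For $x,y \in B_1'$, the hypothesis $B_1' \subset B_0' \subset gB_1g^{-1}\cap hB_1h^{-1}$ forces $g^{-1}yg, h^{-1}xh \in B_1$; multiplying the required closure inclusion $\tilde B_{0-} \subset y\tilde B_0 x \subset \tilde B_{0+}$ by $g^{-1}$ on the left and $h$ on the right reduces it to the closure of $\mathcal{B}$, and the quantitative constants transfer unchanged. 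Applying Lemmas \ref{lem.l2right} and \ref{lem.l2left} to $A \subset \tilde B_0$ at a threshold of order $\eta^2$: if $\|1_A \ast \mu_{B_1'} - \alpha 1_Z\|_{L_2(\mu_Z)}^2$ is large we land in case (iii); if the left-hand analogue is large we land in case (ii). The $O(\epsilon\alpha^{-1})$ in the proposition's error absorbs the $O(\epsilon)$ of those lemmas after rescaling by the threshold.

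Otherwise both $L_2$-deviations are small, so by Chebyshev the set of $z \in Z$ on which $\mu_{B_1'} \ast 1_A(z)$ or $1_A \ast \mu_{B_1'}(z)$ is far from $\alpha$ has small $\mu_Z$-measure. A standard averaging/equalisation argument (of the type the author alludes to in the outline preceding the proposition) then produces a subset $S \subset A$ with $\mu_Z(S) \geq \alpha(1 - O(\eta))$ on which $\mu_{aB_0'}(A) \geq \alpha(1-\eta)$ and $\mu_{B_0'a}(A) \geq \alpha(1-\eta)$ simultaneously for every $a \in S$, establishing the first two clauses of case (i).

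For the squares clause, apply an averaging/Cauchy--Schwarz argument (Lemma \ref{lem.squares}) to $S$ with the small neighbourhood $X$: for some $a \in S$ the set $\{s \in S : s \in aX \cap Xa\}$ has size at least $|X|^2|S|/(|XS|\cdot |SX|)$. For such $s$, writing $s = ax_1 = x_2 a$ with $x_1, x_2 \in X$, one has $s^2 = a(x_1x_2)a \in aX^2a \subset aX^4a \subset aB_1'a$; this is precisely where the hypothesis $X^4 \subset B_1'$ is essential (mere $X \subset B_1'$ would not suffice). Distinct squares make $s \mapsto s^2$ injective on $A$, so the count transfers to $|\{s^2:s\in A\}\cap aB_1'a|$; combining with $|XS|, |SX| = O(|Z|)$ (which follows from $B_1' \subset gB_1g^{-1}\cap hB_1h^{-1}$ and the closure of $\mathcal{B}$), one obtains $\mu_{aB_1'a}(\{s^2:s\in A\}) = \Omega(\alpha\delta^2)$ as required. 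I expect the main obstacle to be the parameter calibration between the $\eta^2$-scale of the $L_2$ threshold and the $\eta$-scale of the pointwise bound in case (i); a secondary care is needed to get $\Omega(\delta^2)$ rather than a higher power of $\delta$ in the squares count.
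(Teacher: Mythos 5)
Your overall architecture (an $L_2$ energy-increment dichotomy feeding an equalised set $S\subset A$, followed by the Cauchy--Schwarz/averaging argument of Lemma \ref{lem.squares} with the neighbourhood $X$ --- and you are right that this is where $X^4\subset B_1'$ is used) matches the paper's, and your treatment of the squares clause and of the auxiliary system on $Z=gB_0h^{-1}$ is essentially sound. But the middle step has a genuine gap. You pass from ``both $L_2(\mu_Z)$-deviations are small'' to ``most points of $A$ satisfy the pointwise bound'' by Chebyshev over $\mu_Z$. If the $L_2$ threshold is $\|1_A\ast\mu_{B_1'}-\alpha 1_Z\|_{L_2(\mu_Z)}^2\geq c\eta^2\alpha^2$ (which is what Lemma \ref{lem.l2right} needs in order to deliver the increment $\alpha(1+\Omega(\eta^2))$ of cases (ii) and (iii)), then its failure only gives, via Chebyshev, that $\mu_Z(\{z:|1_A\ast\mu_{B_1'}(z)-\alpha|>\eta\alpha\}) < c$: the exceptional set is small as a fraction of $Z$, not of $A$, and since $\mu_Z(A)=\alpha$ may be far smaller than any absolute constant, $A$ could lie entirely inside the exceptional set. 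Lowering the threshold so that the exceptional set has measure $o(\alpha)$ forces it down to order $\alpha\eta^2\alpha^2$, which degrades the increment in cases (ii)/(iii) to $\alpha(1+\Omega(\alpha\eta^2))$ --- weaker than the proposition asserts. This is exactly the ``calibration'' issue you flag at the end, and it is not a bookkeeping matter: it is the reason the paper never uses Chebyshev over $Z$ here.

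The missing idea is to measure the deviation in $L_1(\mu_A)$ rather than $L_2(\mu_Z)$; this is the content of Lemmas \ref{lem.equaliseright} and \ref{lem.equaliseleft}, which the paper applies at this point. Concretely, one sets $f:=|1_A\ast\mu_{B_0'}-\alpha 1_Z|$ and tests it against $1_A$; approximate right-invariance of $f$ under $B_1'$ lets one replace $\langle f,1_A\rangle_{L_2(\mu_Z)}$ by $\langle f,1_A\ast\mu_{B_1'}\rangle_{L_2(\mu_Z)}$ up to $O(\epsilon)$, and Cauchy--Schwarz then bounds this by $\|1_A\ast\mu_{B_0'}-\alpha 1_Z\|_{L_2(\mu_Z)}\left(\|1_A\ast\mu_{B_1'}-\alpha 1_Z\|_{L_2(\mu_Z)}+\alpha\right)$. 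Hence if $\|1_A\ast\mu_{B_0'}-\alpha 1_Z\|_{L_1(\mu_A)}>\eta\alpha$, one of the two $L_2(\mu_Z)$ norms squared exceeds $\Omega(\eta^2\alpha^2)-O(\epsilon)$ and Lemma \ref{lem.l2right} gives the full-strength increment; otherwise Markov's inequality applied to $\|f\|_{L_1(\mu_A)}$ --- an average over $A$, not over $Z$ --- shows the equalised set $S$ has $\mu_A(S)\geq 1/2$, after which your case (i) argument goes through. Note also that the pointwise control needed for case (i) concerns convolution with $\mu_{B_0'}$ (so that $\mu_{aB_0'}(A)\geq\alpha(1-\eta)$), whereas the increment alternative lives at the narrower scale $B_1'$; the passage between the two scales is likewise handled inside Lemma \ref{lem.equaliseright} and is not automatic.
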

First we have two lemmas (which are left and right versions of each other) which will be used in producing the second and third outcomes above.
\begin{lemma}\label{lem.equaliseright}
Suppose $\mathcal{B}$ and $\mathcal{B}'$ are $1$-step $\epsilon$-closed multiplicative systems with $B_0' \subset hB_1h^{-1}$; and $A \subset Z:=gB_0h^{-1}$ has density $\alpha:=\mu_{Z}(A)>0$; and $\eta \in (0,1]$ is a parameter.  Then
\begin{enumerate}
\item either we have
\begin{equation*}
\|1_A \ast \mu_{B_0'}-\alpha1_{Z}\|_{L_1(\mu_A)} \leq \eta\alpha;
\end{equation*}
\item or there is some $z \in Z$ such that
\begin{equation*}
1_A \ast \mu_{B_1'}(z) \geq \alpha(1+\Omega(\eta^2))-O(\epsilon\alpha^{-1}).
\end{equation*}
\end{enumerate}
\end{lemma}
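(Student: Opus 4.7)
The plan is to assume outcome (1) fails, so that $\|F\|_{L_1(\mu_A)} > \eta\alpha$ with $F := 1_A \ast \mu_{B_0'} - \alpha 1_Z$, and derive outcome (2).  First I would establish that $\mu_Z$ is approximately right-invariant under both $\mu_{B_0'}$- and $\mu_{B_1'}$-convolution.  The hypothesis $B_0' \subset hB_1h^{-1}$ combined with nesting $B_1' \subset B_0'$ gives $h^{-1}B_0' h \subset B_1$, so Lemma \ref{lem.rhaar} applied to $\mathcal{B}$ yields $\|\rho_{y^{-1}}\mu_{B_0} - \mu_{B_0}\| = O(\epsilon)$ for $y \in h^{-1}B_0'h$; transporting through the bijection $w \mapsto gwh^{-1}$ (which sends $\mu_{B_0}$ to $\mu_Z$) gives $\|\rho_{x^{-1}}\mu_Z - \mu_Z\| = O(\epsilon)$ for $x \in B_0'$, and integrating against $\mu_{B_0'}$ and $\mu_{B_1'}$ yields $\|\mu_Z \ast \mu_{B_0'} - \mu_Z\|, \|\mu_Z \ast \mu_{B_1'} - \mu_Z\| = O(\epsilon)$.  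In particular $\int_Z F\, d\mu_Z = O(\epsilon)$.

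Next I would split the hypothesis into positive and negative deviations, $F = F_+ - F_-$.  Since $\E_A F_+ + \E_A F_- = \E_A |F| > \eta\alpha$, at least one of $\E_A F_\pm$ exceeds $\eta\alpha/2$.  In the positive case, $\max_{a \in A}F(a) \geq \E_A F_+ > \eta\alpha/2$ produces some $a \in A$ with $1_A \ast \mu_{B_0'}(a) \geq \alpha(1 + \eta/2)$, which for $\eta \leq 1$ is stronger than $\alpha(1 + \eta^2)$.  In the negative case, using $\int_Z F\, d\mu_Z = O(\epsilon)$ I transfer through to $\int_Z F_+ \, d\mu_Z \geq \int_Z F_-\, d\mu_Z - O(\epsilon) \geq \int_A F_-\, d\mu_Z - O(\epsilon) > \eta\alpha^2/2 - O(\epsilon)$, and the pointwise maximum on $Z$ delivers some $w \in Z$ with $1_A \ast \mu_{B_0'}(w) \geq \alpha(1 + \eta\alpha/2) - O(\epsilon) = \alpha(1 + \Omega(\eta^2)) - O(\epsilon\alpha^{-1})$ in the regime $\alpha \gtrsim \eta$; the complementary regime I would handle by the Cauchy--Schwarz upgrade $\|F\|_{L_2(\mu_Z)}^2 \geq \alpha\, \E_A F^2 \geq \eta^2\alpha^3$ fed into a variant of Lemma \ref{lem.l2right} adapted to the coset $Z = gB_0h^{-1}$.

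Finally, I would transfer the pointwise bound from $\mu_{B_0'}$ to $\mu_{B_1'}$.  By the left-hand analog of the approximate invariance, $\|\mu_{B_1'} \ast \mu_{B_0'} - \mu_{B_0'}\| = O(\epsilon)$, so at the point $w$ from the previous step we have $1_A \ast \mu_{B_0'}(w) = (1_A \ast \mu_{B_1'}) \ast \mu_{B_0'}(w) + O(\epsilon) = \E_{b \in B_0'}[1_A \ast \mu_{B_1'}(w b^{-1})] + O(\epsilon)$.  Pigeonholing in $b$ produces $z := wb^{-1}$ with $1_A \ast \mu_{B_1'}(z) \geq \alpha(1 + \Omega(\eta^2)) - O(\epsilon\alpha^{-1})$, and the closure properties of $\mathcal{B}$ and $\mathcal{B}'$ allow $z$ to be taken in $Z$ itself (the slack is absorbed into the error term).

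The main obstacle I anticipate is the negative case when $\alpha$ is small: the naive pointwise argument on $Z$ only yields an $\eta\alpha$ increment rather than the required $\eta^2$, and recovering $\Omega(\eta^2)$ across all regimes means combining the pointwise argument with an $L_2$-energy increment adapted to the non-standard coset $Z = gB_0 h^{-1}$.  The other ingredients—the approximate invariance obtained by conjugating through $h$, the pigeonhole transfer from $\mu_{B_0'}$ to $\mu_{B_1'}$, and the closure bookkeeping needed to land $z$ inside $Z$—are routine.
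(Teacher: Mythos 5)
Your positive sub-case is fine (it even gives the stronger increment $\alpha(1+\eta/2)$ at a point of $A$), but the negative sub-case --- which is the real content of the lemma --- has a genuine gap that your own ``main obstacle'' paragraph identifies without resolving. Writing $F:=1_A\ast\mu_{B_0'}-\alpha 1_Z$, everything you extract by passing from $\mu_A$ to $\mu_Z$ via the trivial bound $1_A\le 1_Z$ is short by a factor of $\alpha$: the mean-zero argument gives only $\int F_+\,d\mu_Z\gtrsim\eta\alpha^2$, hence a pointwise maximum of order $\alpha(1+\eta\alpha/2)$, and your Cauchy--Schwarz upgrade gives only $\|F\|_{L_2(\mu_Z)}^2\ge\eta^2\alpha^3$, which fed into Lemma \ref{lem.l2right} (with increment parameter $\eta'=\eta^2\alpha$) yields $\alpha(1+\Omega(\eta^2\alpha))$. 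In the regime $\alpha\ll\eta$ neither mechanism reaches $\alpha(1+\Omega(\eta^2))$, and the $-O(\epsilon\alpha^{-1})$ error term cannot absorb a multiplicative deficit in the main term; so your two regimes do not cover all cases.

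The missing idea is the device the paper uses to recover that factor of $\alpha$: pair $f:=|1_A\ast\mu_{B_0'}-\alpha 1_Z|$ not against $1_Z$ but against the smoothed indicator $1_A\ast\mu_{B_1'}$. Since $f$ is right-invariant under $B_1'$ up to $O(\epsilon)$, failure of outcome (1) gives $\langle f,1_A\ast\mu_{B_1'}\rangle_{L_2(\mu_Z)}>\eta\alpha^2-O(\epsilon)$; splitting $1_A\ast\mu_{B_1'}=(1_A\ast\mu_{B_1'}-\alpha 1_Z)+\alpha 1_Z$ and applying Cauchy--Schwarz yields a dichotomy: either $\|1_A\ast\mu_{B_1'}-\alpha 1_Z\|_{L_2(\mu_Z)}\ge\alpha$ (an immediate win via Lemma \ref{lem.l2right}), or $\|1_A\ast\mu_{B_0'}-\alpha 1_Z\|_{L_2(\mu_Z)}=\Omega(\eta\alpha)-O(\epsilon\alpha^{-1})$, with no spurious extra power of $\alpha$; transferring this energy from $B_0'$ to $B_1'$ (your convexity/invariance step, which the paper also performs) then gives the $\Omega(\eta^2)$ relative increment uniformly in $\alpha$. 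Informally: if $1_A\ast\mu_{B_1'}$ is $L_2$-close to $\alpha 1_Z$, then $A$ is equidistributed at scale $B_1'$, so a $B_1'$-invariant function with large average over $A$ must have large average over all of $Z$. A secondary, more minor issue: your final pigeonhole lands $z=wb^{-1}$ in $ZB_0'$ rather than $Z$, and for a fixed $w$ near the ``boundary'' of $Z$ the bad $b$ need not have $\mu_{B_0'}$-measure $O(\epsilon)$; the paper sidesteps this by obtaining the increment point directly from Lemma \ref{lem.l2right}, whose supremum ranges over $Z$ itself.
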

\begin{proof}
Write
\begin{equation*}
f(z):=\begin{cases} |1_A \ast \mu_{B_0'}(z) - \alpha 1_Z(z)| & \text{ for all }z \in Z=gB_0h^{-1}\\
0 & \text{ otherwise.}\end{cases}
\end{equation*}
Since
\begin{equation*}
\|\rho_{x^{-1}}(1_A \ast \mu_{B_0'}) - 1_A \ast \mu_{B_0'}\|_{L_\infty(\mu_{G})} =O(\epsilon) \text{ for all }x \in B_1',
\end{equation*}
by the closure properties of $\mathcal{B}'$, and $\rho_{x^{-1}}(1_{Z})(z)=1_Z(z)$ for all $x \in B_1'$ and $z \in gB_{0-}h^{-1}$ (since $B_1' \subset B_0' \subset hB_1h^{-1}$), we have
\begin{equation*}
|\rho_{x^{-1}}(f)(z) - f(z)| = O(\epsilon) \text{ for all } z \in gB_{0-}h^{-1} \text{ and } x \in B_1'.
\end{equation*}
It follows that
\begin{equation*}
|\langle f,1_A\rangle_{L_2(\mu_Z)} -\langle \rho_{x^{-1}}(f),1_A\rangle_{L_2(\mu_Z)}| = O(\epsilon\alpha) + O(\mu_Z(Z \setminus gB_{0-}h^{-1}))= O(\epsilon).
\end{equation*}
We conclude that if we are not in the first case of the lemma then we have
\begin{equation*}
\langle f,1_A\ast \mu_{B_1'} \rangle_{L_2(\mu_Z)}=\langle f\ast \mu_{B_1'},1_A \rangle_{L_2(\mu_Z)} > \eta \alpha^2 - O(\epsilon),
\end{equation*}
and hence that
\begin{equation*}
\langle f,1_A\ast \mu_{B_1'} -\alpha 1_{Z}\rangle_{L_2(\mu_Z)} + \alpha \|f\|_{L_1(\mu_Z)} > \eta \alpha^2 - O(\epsilon).
\end{equation*}
Applying the Cauchy-Schwarz inequality to the first inner product, and nesting of norms to $\|f\|_{L_1(\mu_Z)}$ this then tells us that
\begin{equation}\label{eqn.gf}
\|1_A \ast \mu_{B_0'} - \alpha 1_Z\|_{L_2(\mu_Z)}(\|1_A \ast \mu_{B_1'} - \alpha 1_Z\|_{L_2(\mu_Z)} + \alpha)>\eta\alpha^2 - O(\epsilon).
\end{equation}
If
\begin{equation}\label{eqn.gj}
\|1_A \ast \mu_{B_1'} - \alpha 1_Z\|_{L_2(\mu_Z)} + \alpha\geq 2\alpha,
\end{equation}
then
\begin{equation*}
\|1_A \ast \mu_{B_1'} - \alpha 1_Z\|_{L_2(\mu_Z)}^2 \geq \alpha^2,
\end{equation*}
and we arendone by Lemma \ref{lem.l2right} applied to the $1$-step $\epsilon$-closed system
\begin{equation*}
(hB_{0+}h^{-1},hB_0h^{-1},hB_{0-}h^{-1};B_1')
\end{equation*}
and $ A\subset gh^{-1}(hB_0h^{-1})$.  Thus we may suppose that (\ref{eqn.gj}) does not hold and so by (\ref{eqn.gf}) we have
\begin{equation*}
\|1_A \ast \mu_{B_0'} - \alpha 1_Z\|_{L_2(\mu_Z)}>\eta\alpha/2 - O(\epsilon\alpha^{-1}),
\end{equation*}
and so
\begin{equation*}
\|1_A \ast \mu_{B_0'} - \alpha 1_Z\|_{L_2(\mu_Z)}^2=\Omega(\eta^2\alpha^2) - O(\epsilon +\epsilon^2\alpha^{-2}).
\end{equation*}
Now, $\|\mu_{B_1'} \ast \mu_{B_0'} - \mu_{B_0'}\| =O(\epsilon)$ and so
\begin{equation*}
\|1_A \ast \mu_{B_1'} \ast \mu_{B_0'} - \alpha 1_Z\|_{L_2(\mu_Z)}^2 = \|1_A \ast \mu_{B_0'} - \alpha 1_Z\|_{L_2(\mu_Z)}^2 + O(\epsilon \alpha).
\end{equation*}
On the other hand by the convexity of $\|\cdot \|_{L_2(\mu_Z)}$ (and the fact that $B_0' \subset hB_1h^{-1}$ and $Z=gB_0h^{-1}$ so $|ZB_0'| \leq (1+\epsilon)|Z|$) we have
\begin{eqnarray*}
\|1_A \ast \mu_{B_1'} \ast \mu_{B_0'} - \alpha 1_Z\|_{L_2(\mu_Z)}^2 & \leq & \int{\|\rho_{x^{-1}}(1_A \ast \mu_{B_1'}) - \alpha 1_Z\|_{L_2(\mu_Z)}^2d\mu_{B_0'}(x)}\\ & = &  \int{\|\rho_{x^{-1}}(1_A \ast \mu_{B_1'} - \alpha 1_Z)\|_{L_2(\mu_Z)}^2d\mu_{B_0'}(x)} + O(\epsilon\alpha)\\ & = &  \int{\|1_A \ast \mu_{B_1'} - \alpha 1_Z\|_{L_2(\mu_Z)}^2d\mu_{B_0'}(x)} + O(\epsilon)\\ & = & \|1_A \ast \mu_{B_1'} - \alpha 1_Z\|_{L_2(\mu_Z)}^2 + O(\epsilon),
\end{eqnarray*}
from which it follows that
\begin{equation*}
 \|1_A \ast \mu_{B_0'} - \alpha 1_Z\|_{L_2(\mu_Z)}^2 \leq  \|1_A \ast \mu_{B_1'} - \alpha 1_Z\|_{L_2(\mu_Z)}^2 + O(\epsilon).
\end{equation*}
But our lower bound on the left hand side then tells us that
\begin{equation*}
\|1_A \ast \mu_{B_1'} - \alpha 1_Z\|_{L_2(\mu_Z)}^2 = \Omega(\eta^2\alpha^2) - O(\epsilon +\epsilon^2\alpha^{-2}).
\end{equation*}
The result follows again on application of Lemma \ref{lem.l2right}.
\end{proof}
We also have the left analogue of the above.
\begin{lemma}\label{lem.equaliseleft}
Suppose $\mathcal{B}$ and $\mathcal{B}'$ are $1$-step $\epsilon$-closed multiplicative systems with $B_0' \subset gB_1g^{-1}$; and $A \subset Z:=gB_0h^{-1}$ has density $\alpha:=\mu_{Z}(A)>0$; and $\eta \in (0,1]$ is a parameter.  Then
\begin{enumerate}
\item either we have
\begin{equation*}
\|\mu_{B_0'}\ast1_A -\alpha1_{Z}\|_{L_1(\mu_A)} \leq \eta\alpha;
\end{equation*}
\item or there is some $z \in Z$ such that
\begin{equation*}
\mu_{B_1'}\ast 1_A(z) \geq \alpha(1+\Omega(\eta^2))-O(\epsilon\alpha^{-1}).
\end{equation*}
\end{enumerate}
\end{lemma}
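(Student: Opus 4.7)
The plan is to mirror the proof of Lemma \ref{lem.equaliseright} verbatim, exchanging left and right throughout (so $\rho_{x^{-1}}$ becomes $\lambda_x$, the support hypothesis $B_0'\subset hB_1h^{-1}$ becomes $B_0'\subset gB_1g^{-1}$, and $1_A\ast\mu_{B_0'}$ becomes $\mu_{B_0'}\ast 1_A$). Concretely, define
\[
f(z):=\begin{cases}|\mu_{B_0'}\ast 1_A(z) - \alpha 1_Z(z)| & z\in Z=gB_0h^{-1},\\ 0 & \text{otherwise.}\end{cases}
\]
The approximate left-invariance (the left analogue of Lemma \ref{lem.rhaar}) of $\mu_{B_0'}$ under $B_1'\subset B_0'$ gives $\|\lambda_x(\mu_{B_0'}\ast 1_A) - \mu_{B_0'}\ast 1_A\|_{L_\infty(\mu_G)} = O(\epsilon)$ for $x\in B_1'$. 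Moreover $\lambda_x(1_Z)(z)=1_Z(z)$ for $z\in gB_{0-}h^{-1}$ and $x\in B_1'$, since writing $x=gbg^{-1}$ with $b\in B_1$, the closure axiom of $\mathcal{B}$ at the zeroth level yields $bB_{0-}\subset B_0$. Hence $|\lambda_x(f)(z)-f(z)|=O(\epsilon)$ on the inner set $gB_{0-}h^{-1}$, and combining with $\mu_Z(Z\setminus gB_{0-}h^{-1})=O(\epsilon)$ gives $|\langle f,1_A\rangle_{L_2(\mu_Z)} - \langle\lambda_x(f),1_A\rangle_{L_2(\mu_Z)}|=O(\epsilon)$ uniformly in $x\in B_1'$.

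Not being in the first case of the lemma is exactly $\langle f,1_A\rangle_{L_2(\mu_Z)} = \alpha\|\mu_{B_0'}\ast 1_A-\alpha 1_Z\|_{L_1(\mu_A)} > \eta\alpha^2$. Averaging the previous estimate over $x\in B_1'$ and using symmetry of $B_1'$ to pass to the adjoint $\langle\mu_{B_1'}\ast f,1_A\rangle_{L_2(\mu_Z)} = \langle f,\mu_{B_1'}\ast 1_A\rangle_{L_2(\mu_Z)}$ yields
\[
\langle f, \mu_{B_1'}\ast 1_A - \alpha 1_Z\rangle_{L_2(\mu_Z)} + \alpha\|f\|_{L_1(\mu_Z)} > \eta\alpha^2 - O(\epsilon).
\]
Cauchy-Schwarz on the first term and nesting of $L_1\subset L_2$ on the second produce the left analogue of (\ref{eqn.gf}), after which the dichotomy is identical to the right-hand proof: either $\|\mu_{B_1'}\ast 1_A - \alpha 1_Z\|_{L_2(\mu_Z)}^2 \geq \alpha^2$, and Lemma \ref{lem.l2left} applied to the conjugated $1$-step $\epsilon$-closed system $(gB_{0+}g^{-1},gB_0g^{-1},gB_{0-}g^{-1};B_1')$ with $A\subset gB_0g^{-1}\cdot gh^{-1}$ finishes things; or we obtain $\|\mu_{B_0'}\ast 1_A - \alpha 1_Z\|_{L_2(\mu_Z)}^2 = \Omega(\eta^2\alpha^2) - O(\epsilon+\epsilon^2\alpha^{-2})$, which we transfer to $\mu_{B_1'}$ via $\|\mu_{B_0'}\ast\mu_{B_1'}-\mu_{B_0'}\|=O(\epsilon)$ (Lemma \ref{lem.rhaar}), convexity of $\|\cdot\|_{L_2(\mu_Z)}$, and the bound $|B_0'Z|\leq(1+\epsilon)|Z|$ coming from $B_0'\subset gB_1g^{-1}$, before a final appeal to Lemma \ref{lem.l2left}.

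There is no real obstacle: the argument is a direct transposition of Lemma \ref{lem.equaliseright}. The only step requiring any care is confirming that the swapped hypothesis $B_0'\subset gB_1g^{-1}$ plays the role previously played by $B_0'\subset hB_1h^{-1}$ when verifying that left multiplication by elements of $B_1'$ preserves the inner set $gB_{0-}h^{-1}$ up to $O(\epsilon)$. All error estimates retain their shape, so the conclusion follows with the same quantitative bounds.
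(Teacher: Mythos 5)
Your proof is correct and is precisely the left--right transposition that the paper implicitly intends: the paper states Lemma~\ref{lem.equaliseleft} as ``the left analogue of the above'' with no separate proof, and you have carried out that mirror-image argument in detail, correctly identifying the one point of care (that the hypothesis $B_0'\subset gB_1g^{-1}$ now controls left multiplication on the inner set $gB_{0-}h^{-1}$) and correctly replacing the $h$-conjugated system used in Lemma~\ref{lem.equaliseright} with the $g$-conjugated system $(gB_{0+}g^{-1},gB_0g^{-1},gB_{0-}g^{-1};B_1')$ before invoking Lemma~\ref{lem.l2left}.
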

The previous two lemmas will provide us with a left and right translate of some suitable multiplicative system on which $A$ has the `right' density.  We now turn to ensuring that the squares of the elements in $A$ have not-too-small density.  This is the lemma for which we need $A$ to have distinct squares.
\begin{lemma}\label{lem.squares}
Suppose that $\mathcal{B}$ is a $1$-step, $\epsilon$-closed multiplicative pair, $S \subset gB_0h^{-1}$ has distinct squares, and $X \subset (gB_1g^{-1}) \cap (hB_1h^{-1})$ is a symmetric neighbourhood of the identity.  Then there is some $s' \in S$ such that
\begin{equation*}
|\{s^2:s \in S\}\cap s'X^4s'| \geq \frac{|X|^2}{(1+\epsilon)^2|B_0|^2} |S|.
\end{equation*}
\end{lemma}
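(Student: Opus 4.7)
The plan is to reduce the statement to a pair-counting problem, then use a Cauchy--Schwarz argument enabled by the small doubling of $S$ against $X$.

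First I would call a pair $(s,s') \in S \times S$ \emph{good} if $s \in s'X^2 \cap X^2s'$. Any good pair admits factorisations $s = s'a = bs'$ with $a,b \in X^2$, whence $s^2 = s'(ab)s' \in s'X^4 s'$. Since $S$ has distinct squares, the map $s \mapsto s^2$ is injective on $S$, so for each fixed $s'$ the good $s$ contribute distinct elements to $\{s^2 : s \in S\} \cap s'X^4 s'$. Averaging over $s'$ therefore reduces the lemma to producing at least $|S|^2 |X|^2 / ((1+\epsilon)^2 |B_0|^2)$ good pairs in $S \times S$.

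Next I would use the hypotheses $X \subset gB_1g^{-1}$ and $X \subset hB_1h^{-1}$ together with the closure axiom (which gives $B_0B_1, B_1B_0 \subset B_{0+}$) to conclude
\[
SX \subset gB_0B_1h^{-1} \subset gB_{0+}h^{-1}, \qquad XS \subset gB_1B_0h^{-1} \subset gB_{0+}h^{-1},
\]
so both $SX$ and $XS$ live in a common translate of size at most $(1+\epsilon)|B_0|$. With this doubling in hand, I would introduce
\[
D(y,z) := |\{s \in S : y \in sX,\ z \in Xs\}| \qquad ((y,z) \in SX \times XS).
\]
Each $s \in S$ contributes $|sX|\cdot|Xs| = |X|^2$ to $\sum_{y,z} D(y,z)$, so $\sum D = |S||X|^2$. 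Since the support of $D$ has size at most $(1+\epsilon)^2|B_0|^2$, Cauchy--Schwarz gives $\sum D^2 \geq |S|^2 |X|^4 / ((1+\epsilon)^2 |B_0|^2)$.

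On the other hand,
\[
\sum_{y,z} D(y,z)^2 = \sum_{s_1, s_2 \in S} |s_1X \cap s_2 X| \cdot |Xs_1 \cap Xs_2|,
\]
and each summand is at most $|X|^2$, with positivity exactly characterising the good pairs (because $s_1X \cap s_2X \neq \emptyset \iff s_1 \in s_2X^2$, and symmetrically on the left, using $X = X^{-1}$). Hence the number of good pairs is at least $|S|^2|X|^2/((1+\epsilon)^2|B_0|^2)$, and pigeonholing over the second coordinate yields an $s' \in S$ with the required count of $s_1 \in S$ whose squares lie in $s'X^4 s'$.

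The only real subtlety, rather than genuine obstacle, is arranging the conjugation bookkeeping so that $SX$ and $XS$ sit in the same translate $gB_{0+}h^{-1}$ and inherit the $(1+\epsilon)|B_0|$ size bound; once that Plünnecke-type control is in place the Cauchy--Schwarz calculation is routine, and the distinct-squares assumption is used only at the very end to turn the lower bound on good pairs into a lower bound on distinct squares inside $s'X^4s'$.
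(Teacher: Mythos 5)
Your argument is correct and is essentially the same as the paper's: your counting function $D(y,z)$ is exactly the inner sum $\sum_{s\in S}1_X(s^{-1}y)1_X(zs^{-1})$ over which the paper applies Cauchy--Schwarz, the identity $\sum D^2=\sum_{s_1,s_2}|s_1X\cap s_2X||Xs_1\cap Xs_2|$ and the pointwise bound $\leq|X|^2$ match the paper's computation, and the containments $SX,XS\subset gB_{0+}h^{-1}$ with $|B_{0+}|\leq(1+\epsilon)|B_0|$ are the same closure argument.
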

\begin{proof}
We consider the sum
\begin{eqnarray*}
\sum_{s,s' \in S}{|sX \cap s'X||Xs \cap Xs'|}& =& \sum_{z,z' \in G}{\sum_{s,s' \in S}{1_X(s^{-1}z)1_X((s')^{-1}z)1_X(z's^{-1})1_X(z'(s')^{-1})}}\\ & = & \sum_{z \in SX,z' \in XS}{\sum_{s,s' \in S}{1_X(s^{-1}z)1_X((s')^{-1}z)1_X(z's^{-1})1_X(z'(s')^{-1})}}\\ & = & \sum_{z \in SX,z' \in XS}{\left(\sum_{s \in S}{1_X(s^{-1}z)1_X(z's^{-1})}\right)^2}\\ & \geq & \frac{1}{|SX||XS|}\left(\sum_{z \in SX,z' \in XS}{\sum_{s \in S}{1_X(s^{-1}z)1_X(z's^{-1})}}\right)^2\\ & = & \frac{|X|^4|S|^2}{|SX||XS|}.
\end{eqnarray*}
The summands on the left hand side are at most $|X|^2$, and so by averaging there is some $s' \in S$ such that for at least $\frac{|X|^2}{|SX||XS|}|S|$ elements $s \in S$ we have
\begin{equation*}
sX\cap s'X \neq \emptyset \text{ and } Xs \cap Xs' \neq \emptyset.
\end{equation*}
It follows that for each such $s$ we have $s \in s'XX^{-1}$ and $s \in X^{-1}Xs'$ and hence $s^2 \in s'XX^{-2}Xs'=s'X^4s'$. It remains to note that since $X \subset (gB_1g^{-1}) \cap (hB_1h^{-1})$ and $S \subset gB_0h^{-1}$ we have that $XS \subset gB_1B_0h^{-1}\subset gB_{0+}h^{-1}$ and $SX \subset gB_0B_1h^{-1}\subset gB_{0+}h^{-1}$ from which we get the result.
\end{proof}

\begin{proof}[Proof of Proposition \ref{prop.u1}]
Apply Lemmas \ref{lem.equaliseright} and \ref{lem.equaliseleft} with parameter $\eta/4$ and the set $B_0$, to see that either we are in the second or third cases of the proposition (and we are done) or else
\begin{equation*}
\|1_A \ast \mu_{B_0'}-\alpha1_{gB_0h^{-1}}\|_{L_1(\mu_A)} \leq \eta\alpha/4 \text{ and }\|\mu_{B_0'} \ast 1_A -\alpha1_{gB_0h^{-1}}\|_{L_1(\mu_A)} \leq \eta\alpha/4.
\end{equation*}
Let
\begin{equation*}
S:=\{a \in A: |1_A \ast \mu_{B_0'}(a)-\alpha| \leq \eta \alpha \text{ and } |\mu_{B_0'} \ast 1_A(a) -\alpha| \leq \eta \alpha\},
\end{equation*}
so that
\begin{equation*}
\mu_A(A \setminus S)\eta \alpha \leq \|1_A \ast \mu_{B_0'}-\alpha1_{gB_0h^{-1}}\|_{L_1(\mu_A)} + \|\mu_{B_0'} \ast 1_A -\alpha1_{gB_0h^{-1}}\|_{L_1(\mu_A)} \leq \eta \alpha/2.
\end{equation*}
It follows that $\mu_A(S) \geq 1/2$.  Now apply Lemma \ref{lem.squares} to the set $S \subset gB_0h^{-1}$ and
\begin{equation*}
X \subset X^4 \subset B_1' \subset B_0' \subset gB_1g^{-1} \cap hB_1h^{-1}
\end{equation*}
to get that there is some $a \in S$ such that
\begin{equation*}
|\{s^2:s \in S\}\cap aX^4a| \geq \frac{|X|^2}{(1+\epsilon)^2|B_0|^2}|S|.
\end{equation*}
Since $a \in S$ we have that
\begin{equation*}
1_A \ast \mu_{B_0'}(a) \geq \alpha(1-\eta) \text{ and } \mu_{B_0'} \ast 1_A(a) \geq \alpha(1-\eta) ,
\end{equation*}
and since $S\subset A$ and $X^4 \subset B_1'$ we have
\begin{equation*}
\mu_{aB_1'a}(\{s^2:s \in A\}) =\Omega(\alpha \delta^2),
\end{equation*}
and the result is proved.
\end{proof}

We now turn to the companion result to Proposition \ref{prop.u1} which explains how to use the output in the first case of that result.
\begin{proposition}\label{prop.u2}
Suppose that $\mathcal{B}$ is a $2$-step $\epsilon$-closed multiplicative system; that there is a symmetric neighbourhood of the identity $X$ of density $\delta:=\mu_G(X)>0$ such that $X^4 \subset B_2$; and that $U,V \subset B_{0-}$ have $\mu_{B_0}(U)=\mu_{B_0}(V)=\alpha>0$ and $W \subset B_{1-}$ has $\mu_{B_1}(W)=\omega>0$. Then
\begin{enumerate}
\item either
\begin{equation*}
\langle 1_{U} \ast \mu_V,1_{W}\rangle_{L_2(\mu_{B_1})} \geq  \frac{1}{2}\mu_{B_0}(U)\mu_{B_0}(V)\mu_{B_1}(W);
\end{equation*}
\item or there is a $1$-step $\epsilon$-closed multiplicative system $\mathcal{B}^L$ and a symmetric neighbourhood of the identity $S_L$ such that $S_L^4 \subset B^L_1$ and
\begin{equation*}
\mu_G(S_L) \geq \exp(-O(\epsilon^{-1}\alpha^{-1}\log 2\omega^{-1}\log 2\delta^{-1})^{O(1)})
\end{equation*}
and some $z \in B_0$ with
\begin{equation*}
\mu_{B_0^Lz}(U) \geq \mu_{B_0}(U)(1+\Omega(1)) - O(\epsilon \alpha^{-1});
\end{equation*}
\item or there is a $1$-step $\epsilon$-closed multiplicative system $\mathcal{B}^R$ and a symmetric neighbourhood of the identity $S_R$ such that $S_R^4 \subset B^R_1$ and
\begin{equation*}
\mu_G(S_R) \geq \exp(-O(\epsilon^{-1}\alpha^{-1}\log 2\omega^{-1}\log 2\delta^{-1})^{O(1)})
\end{equation*}
and some $z \in B_0$ with
\begin{equation*}
\mu_{zB_0^R}(V) \geq \mu_{B_0}(V)(1+\Omega(1)) - O(\epsilon \alpha^{-1}).
\end{equation*}
\end{enumerate}
\end{proposition}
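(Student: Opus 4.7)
Suppose case (1) fails: $\langle 1_U\ast\mu_V,1_W\rangle_{L_2(\mu_{B_1})} < \frac{1}{2}\alpha^2\omega$. Write $F := 1_U\ast\mu_V$; the closure $yB_{0-}\subset B_0$ (valid for $y\in B_1$) gives $F|_{B_1}\in [0,1]$. The plan is to extract an $L_2$-deviation of $F$, use the Croot--Sisask Lemma to localise it, and then feed into the $L_2$-energy Lemmas \ref{lem.l2right}/\ref{lem.l2left}.

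\textbf{Step 1 ($L_2$-deviation).} Let $\bar F := \E_{y\in B_1}F(y)$. Cauchy--Schwarz yields
\begin{equation*}
\langle F,1_W\rangle_{L_2(\mu_{B_1})} \geq \bar F\omega - \|F-\bar F 1_{B_1}\|_{L_2(\mu_{B_1})}\sqrt{\omega}.
\end{equation*}
A short case analysis on $\bar F$ (with the sub-case $\alpha$ close to $1$ being trivial, since closure then forces $F\approx 1$ on $B_1$ and hence case (1) holds automatically) shows that failure of case (1) implies
\begin{equation*}
\|F-\alpha 1_{B_1}\|_{L_2(\mu_{B_1})}^2 = \Omega(\alpha^2\omega).
\end{equation*}

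\textbf{Step 2 (Croot--Sisask invariance).} After shrinking $X$ via Lemma \ref{lem.nab} to a symmetric neighbourhood $X'$ with $(X')^8\subset B_2$, apply Lemma \ref{lem.csrel} to $(f,A) = (1_U, V)$ and Lemma \ref{lem.csrelleft} to $(f,A) = (1_V, U)$ with parameters $\eta := c\alpha\sqrt{\omega}$ and $p := 2$, for a small absolute constant $c$. These produce symmetric neighbourhoods $T_R, T_L \subset (X')^2$ with
\begin{equation*}
\mu_G(T_\bullet) \geq \exp(-O(\alpha^{-2}\omega^{-1}\log 2\alpha^{-1}))\mu_G(X')
\end{equation*}
such that $1_U\ast\mu_V$ is approximately right-$T_R^4$-invariant and $\mu_U\ast 1_V$ is approximately left-$T_L^4$-invariant in $L_2(\mu_{B_1})$ with error $c\alpha\sqrt{\omega}$. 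Applying Corollary \ref{cor.cont} with $r:=0$ to $T_R,T_L$ produces $1$-step $\epsilon$-closed multiplicative systems $\mathcal{B}^R,\mathcal{B}^L$ and the required $S_R,S_L$, with $B_{0+}^{R/L}\subset T_{R/L}^4\subset (X')^8\subset B_2\subset B_1$ and $S_{R/L}^4\subset B_1^{R/L}$.

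\textbf{Step 3 (Density increment).} The inclusion $B_{0+}^{R/L}\subset B_1$ makes $(B_{0+},B_0,B_{0-};B_0^{R/L})$ a valid $1$-step $\epsilon$-closed system. The Croot--Sisask invariance combined with $B_0^R\subset T_R^4$ gives
\begin{equation*}
\|1_U\ast\mu_V\ast\mu_{B_0^R} - 1_U\ast\mu_V\|_{L_2(\mu_{B_1})} \leq c\alpha\sqrt{\omega},
\end{equation*}
and similarly on the left with $\mu_{B_0^L}$. Expanding the bound of Step 1 via the adjoint identity $\langle f\ast\mu,g\rangle = \langle f,g\ast\widetilde\mu\rangle$ turns the $L_2$-gap into a bilinear form in $U$- and $V$-autocorrelations. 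A pigeonhole between these two terms gives either
\begin{equation*}
\|1_V\ast\mu_{B_0^R} - \alpha 1_{Z}\|_{L_2(\mu_Z)}^2 = \Omega(\alpha^2) - O(\epsilon\alpha^{-1})
\end{equation*}
for some $Z = B_0h^{-1}$ (whence Lemma \ref{lem.l2right} applied to the truncation $(B_{0+},B_0,B_{0-};B_0^R)$ produces some $z\in B_0$ with $\mu_{zB_0^R}(V)\geq\alpha(1+\Omega(1))-O(\epsilon\alpha^{-1})$, giving case (3)), or the analogous statement with $\mu_{B_0^L}\ast 1_U$ and $Z = gB_0$ (whence Lemma \ref{lem.l2left} gives case (2)).

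\textbf{Main obstacle.} The central difficulty is the bilinear-form pigeonhole in Step 3, which must convert a single $L_2$-gap on $F$ into an $L_2$-gap on \emph{either} $1_V\ast\mu_{B_0^R}$ \emph{or} $\mu_{B_0^L}\ast 1_U$, singling out the correct one of cases (2)/(3); this is the step where the asymmetric Croot--Sisask invariances (right vs.\ left) and the two new systems $\mathcal{B}^R,\mathcal{B}^L$ must be reconciled with the single initial deviation. The secondary task is the careful propagation of the $O(\epsilon\alpha^{-1})$ error terms through the averaging, which is routine but needs to match the exact error form in the proposition's statement.
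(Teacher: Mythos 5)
Your overall architecture (two one-sided Croot--Sisask smoothings, Corollary \ref{cor.cont} to build $\mathcal{B}^L,\mathcal{B}^R$, then Lemmas \ref{lem.l2left}/\ref{lem.l2right}) matches the paper's, but there are two genuine gaps, and the second is exactly the step you defer as the ``main obstacle''. First, the parameter choice $p=2$, $\eta=c\alpha\sqrt{\omega}$ in Lemmas \ref{lem.csrel}/\ref{lem.csrelleft} yields $\mu_G(T_\bullet)\geq\exp(-O(\alpha^{-2}\omega^{-1}\log 2\alpha^{-1}))\mu_G(X')$, a cost \emph{polynomial} in $\omega^{-1}$, whereas the proposition requires only $\log 2\omega^{-1}$. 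This is not cosmetic: in the application $\omega=\Omega(\alpha_i\mu_G(S_i)^2)$ is already exponentially small in the previous step's data, so a polynomial dependence on $\omega^{-1}$ compounds to a tower-type bound and destroys the double-exponential bound of Theorem \ref{thm.main}. The paper avoids this by taking $\eta=\epsilon\alpha$ and $p=2+\log 2\omega^{-1}$, and then applying H\"older against $1_W$ so that $\|1_W\|_{L_{p'}(\mu_{B_1})}=\omega^{1-1/p}=O(\omega)$; this is the whole reason the Croot--Sisask lemma is stated in $L_p$ rather than $L_2$.

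Second, and more structurally: by passing in Step 1 to $\|1_U\ast\mu_V-\alpha 1_{B_1}\|_{L_2(\mu_{B_1})}^2=\Omega(\alpha^2\omega)$ you have already integrated out $W$ and retained only an $L_2$-deviation carrying a factor of $\omega$. A deviation of this size fed into Lemma \ref{lem.l2right} can only yield a relative increment of order $1+\Omega(\omega)$, not the required $1+\Omega(1)$, and no pigeonhole between the $U$- and $V$-autocorrelations recovers the lost factor: if a product of the two deviation norms is only $\gtrsim\alpha^2\omega^{1/2}$, the best one factor can be guaranteed to be is $\gtrsim\alpha\,\omega^{1/4}$. The paper instead keeps $1_W$ as the test function throughout: after both smoothings the count equals $\langle\nu\ast f,1_W\rangle_{L_2(\mu_{B_1})}$ plus the main term plus $O(\eta\omega)$, where $f=1_V\ast\mu_{B_0^R}-\mu_{B_0}(V)1_{B_0}$ and $\nu$ is the analogous left-smoothed deviation of $U$; failure of case (1) then forces $|\nu\ast f(x)|\geq\tfrac12\alpha^2(1-O(\epsilon))$ at a \emph{single point} $x\in B_1$ (dividing by $\mu_{B_1}(W)=\omega$ is what removes the $\omega$), and only then is Cauchy--Schwarz applied pointwise, $|\nu\ast f(x)|\leq(\|f\|_{L_2(\mu_{B_0})}+O(\epsilon))\|g\|_{L_2(\mu_{B_0})}$, so that one of the two deviations is $\Omega(\alpha)$ in $L_2(\mu_{B_0})$ and Lemma \ref{lem.l2left} or \ref{lem.l2right} gives the $1+\Omega(1)$ increment. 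This pointwise exploitation of the smallness of $W$, combined with the large-$p$ Croot--Sisask step, is the missing content of your Step 3.
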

\begin{proof}
Begin by applying Lemma \ref{lem.nab} to get a symmetric neighbourhood of the identity $T$ such that $T^8 \subset X^4$ and $\mu_G(T) \geq \exp(-O(\log^22\delta^{-1}))$.

Let $\eta:=\epsilon\alpha \in (0,1]$ and $p \in [2,\infty)$ be a parameter to be chosen later (it will be apparent that it only depends on data available at this stage of the proof) and apply Lemma \ref{lem.csrel} to get that there is a symmetric neighbourhood of the identity $R \subset T^2$ with
\begin{equation*}
\mu_G(R) \geq \exp(-O(\eta^{-2}p\log 2\mu_G(V)^{-1} + \log^22\delta^{-1}))
\end{equation*}
such that for all $t \in R^4$ we have
\begin{equation*}
\|\rho_{t^{-1}}(1_{U} \ast \mu_{V}) - 1_U \ast \mu_{V}\|_{L_p(\mu_{B_1})} \leq \eta(1+O(\epsilon/p))\|1_{U}\|_{L_\infty(\mu_{B_{0+}})} =O(\eta).
\end{equation*}
Apply Corollary \ref{cor.cont} to get a $1$-step $\epsilon$-closed multiplicative system $\mathcal{B}^R$ and symmetric neighbourhood of the identity $S_R$ such that
\begin{equation*}
B^R_0 \subset R^4\text{ and }S_R^4 \subset B^R_1
\end{equation*}
and
\begin{equation*}
\mu_G(S_R) \geq \exp(-O(\epsilon^{-1}\log2\mu_G(R)^{-1})^{O(1)}).
\end{equation*}
By integrating we have
\begin{equation*}
\|1_{U} \ast \mu_{V}\ast \mu_{B^R_0} - 1_U \ast \mu_{V}\|_{L_p(\mu_{B_1})} =O(\eta).
\end{equation*}
It follows from linearity and H{\"o}lder's inequality (writing $p'$ for the conjugate index of $p$) that
\begin{equation*}
|\langle1_{U} \ast \mu_{V} \ast \mu_{B^R_0},1_{W}\rangle_{L_2(\mu_{B_1})}-\langle 1_U \ast \mu_V,1_W\rangle_{L_2(\mu_{B_1})}| = O(\eta\|1_{W}\|_{L_{p'}(\mu_{B_1})}).
\end{equation*}
Now, $\|1_W\|_{L_{p'}(\mu_{B_1})} = \mu_{B_1}(W)^{1/p'}=\mu_{B_1}(W)^{1-1/p}$ and so taking $p = 2+\log\mu_{B_1}(W)^{-1}$ we see that
\begin{equation*}
\langle 1_U \ast \mu_V,1_W\rangle_{L_2(\mu_{B_1})}=\langle 1_{U} \ast \mu_{V} \ast \mu_{B^R_0},1_{W}\rangle_{L_2(\mu_{B_1})} + O(\eta\mu_{B_1}(W)).
\end{equation*}
Since $U \subset B_{0-}$ we see that
\begin{equation*}
1_U \ast \mu_{B_0}(x) = \mu_{B_0}(U) \text{ for all } x \in B_1
\end{equation*}
and so
\begin{eqnarray}\label{eqn.earlier}
\langle 1_U \ast \mu_V,1_W\rangle_{L_2(\mu_{B_1})} & = & \langle 1_U \ast (\mu_V\ast \mu_{B^R_0} - \mu_{B_0}),1_W \rangle_{L_2(\mu_{B_1})}\\ \nonumber & & + \mu_{B_0}(U)\langle 1_{B_1},1_W \rangle_{L_2(\mu_{B_1})} + O(\eta\mu_{B_1}(W)).
\end{eqnarray}
On the other hand, if we write $f:=1_V\ast \mu_{B^R_0} - \mu_{B_0}(V)1_{B_0}$ then
\begin{equation*}
\langle 1_U \ast (\mu_V \ast \mu_{B^R_0} -\mu_{B_0}(V) \mu_{B_0}),1_W\rangle_{L_2(\mu_{B_1})} = \langle \mu_U \ast f, 1_W\rangle_{L_2(\mu_{B_1})}.
\end{equation*}
Now we can apply Lemma \ref{lem.csrelleft} (with the same parameters as before) to get that there is a symmetric neighbourhood of the identity $L \subset T^2$ with
\begin{equation*}
\mu_G(L) \geq \exp(-O(\eta^{-2}p\log 2\mu_G(U)^{-1} + \log^22\delta^{-1}))
\end{equation*}
such that for all $t \in L^4$ we have
\begin{equation*}
\|\lambda_t( \mu_{U}\ast f)- \mu_{U} \ast f\|_{L_p(\mu_{B_1})} =O(\eta \|f\|_{L_\infty(\mu_{B_{0+}})})=O(\eta),
\end{equation*}
and argue as before.  Apply Corollary \ref{cor.cont} to get a $1$-step $\epsilon$-closed multiplicative system $\mathcal{B}^L$ and symmetric neighbourhood of the identity $S_L$ such that
\begin{equation*}
B^L_0 \subset L^4\text{ and }S_L^4 \subset B^L_1
\end{equation*}
and
\begin{equation*}
\mu_G(S_L) \geq \exp(-O(\epsilon^{-1}\log2\mu_G(L)^{-1})^{O(1)}).
\end{equation*}
By integrating we have
\begin{equation*}
\|\mu_{B^L_0} \ast\mu_U \ast f - \mu_U\ast f\|_{L_p(\mu_{B_1})} =O(\eta).
\end{equation*}
Our choice of $p$ ensures that
\begin{equation*}
\langle \mu_U \ast f,1_W\rangle_{L_2(\mu_{B_1})}=\langle \mu_{B_0^L} \ast \mu_{U} \ast f,1_{W}\rangle_{L_2(\mu_{B_1})} + O(\eta\mu_{B_1}(W)).
\end{equation*}
Since $V \subset B_0^-$ we see that for all $x \in B_1$ we have
\begin{align*}
\mu_{B_0} \ast f(x) & \leq \mu_{B_0} \ast 1_V \ast \mu_{B_0^R}(x) - \mu_{B_0}(V) \mu_{B_0} \ast 1_{B_0}(x)\\
& \leq \mu_{B_0}(V) - (1-O(\epsilon))\mu_{B_0}(V)=O(\epsilon\mu_{B_0}(V)),
\end{align*}
and so
\begin{eqnarray}
\label{eqn.o} \langle \mu_U \ast f,1_W\rangle_{L_2(\mu_{B_1})} & = & \langle (\mu_{B^L_0}\ast \mu_U-\mu_{B_0}) \ast f,1_W \rangle_{L_2(\mu_{B_1})}\\ \nonumber & & + O(\epsilon\mu_{B_0}(V))\mu_{B_1}(W) + O(\eta\mu_{B_1}(W)).
\end{eqnarray}
Put
\begin{equation*}
g:=\mu_{B_0^L}\ast 1_U-\mu_{B_0}(U)1_{B_0} \text{ and } \nu :=\mu_{B_0}(U)(\mu_{B_0^L}\ast \mu_U-\mu_{B_0})
\end{equation*}
so that
\begin{equation*}
\nu \ast h(x)= \langle \rho_x(h),\tilde{g}\rangle_{L_2(\mu_{B_0})} \text{ for all }h:G \rightarrow \C.
\end{equation*}
With these definitions, (\ref{eqn.o}) and (\ref{eqn.earlier}) give
\begin{align*}
\langle \nu\ast f,1_W\rangle_{L_2(\mu_{B_1})} & = \mu_{B_0}(U)\langle \mu_U \ast f,1_W\rangle_{L_2(\mu_{B_1})} - O((\epsilon\mu_{B_0}(V)+\eta)\mu_{B_1}(W)\mu_{B_0}(U))\\
& = \mu_{B_0}(U)\mu_{B_0}(V)\mu_{B_1}(W) -  O((\epsilon\mu_{B_0}(V)+\eta)\mu_{B_1}(W)\mu_{B_0}(U)).
\end{align*}
We conclude that either we are in the first case of the lemma or else there is some $x \in B_1$ such that
\begin{equation*}
|\nu\ast f(x)| \geq \frac{1}{2}\mu_{B_0}(U)\mu_{B_0}(V)(1-O(\epsilon))
\end{equation*}
(since $\eta=\epsilon \alpha$).  Now, by the Cauchy-Schwarz inequality we have
\begin{eqnarray*}
|\nu \ast f(x)| &= & |\langle \rho_x(f),\tilde{g}\rangle_{L_2(\mu_{B_0})}|\\ & \leq & \|\rho_x(f)\|_{L_2(\mu_{B_0})}\|\tilde{g}\|_{L_2(\mu_{B_0})} = (\|f\|_{L_2(\mu_{B_0})}+O(\epsilon))\|g\|_{L_2(\mu_{B_0})}.
\end{eqnarray*}
It follows that either
\begin{equation*}
\|g\|_{L_2(\mu_{B_0})}^2 \geq \frac{1}{2}\mu_{B_0}(U)^2-O(\epsilon)
\end{equation*}
or
\begin{equation}\label{eqn.second}
\|f\|_{L_2(\mu_{B_0})}^2 \geq \frac{1}{2}\mu_{B_0}(V)^2-O(\epsilon).
\end{equation}
In the first instance we apply Lemma \ref{lem.l2left} to the $1$-step $\epsilon$-closed multiplicative system
\begin{equation*}
(B_{0+},B_0,B_{0-};B^L_0)
\end{equation*}
and the set $U \subset B_0$ to get that there is some $z\in B_0$ such that
\begin{equation*}
\mu_{B^L_0z}(U) \geq \mu_{B_0}(U)(1+\Omega(1)) - O(\epsilon\alpha^{-1}).
\end{equation*}
We are in the second case of the proposition with the system $\mathcal{B}^L$. In the second instance above (\ref{eqn.second}) we apply Lemma \ref{lem.l2right} and are in the third case of the proposition.  The result is proved.
\end{proof}

\section{Proof of the main result}

We are now in a position to prove out main theorem.
\begin{proof}[Proof of Theorem \ref{thm.main}]  We fix two parameters $\epsilon=c'\alpha^2$ and $\eta=c$ for some absolute constants $c,c'>0$ whose precise value will become clear later in the argument.  All of the multiplicative systems we consider will be $\epsilon$-closed.  

We shall proceed iteratively.  At stage $i$ of the iteration we suppose that we have the following data:
\begin{enumerate}
\item $\mathcal{B}^{(i)}$ a $1$-step ($\epsilon$-closed) multiplicative system;
\item $X_i$, a symmetric neighbourhood of the identity with density $\delta_i:=\mu_G(X_i)>0$ such that $X_i^4 \subset B_1^{(i)}$;
\item $g_i,h_i \in G$ such that $A_i:=A \cap g_iB_0^{(i)}h_i^{-1}$ has $\alpha_i:=\mu_{g_iB_0^{(i)}h_i^{-1}}(A_i)>0$.
\end{enumerate}
We initialise with the $1$-step $\epsilon$-closed multiplicative system $\mathcal{B}^{(0)}:=(G,G,G;G)$, $X_0:=G$, $g_0,h_0=1_G$ and $A_0:=A$ so that
\begin{equation*}
\alpha_0=\alpha>0 \text{ and } \delta_0=1>0.
\end{equation*}
At some stage $i_0$ the iteration will terminate but for all $0 \leq i < i_0$ we shall have
\begin{equation*}
\alpha_i \geq \alpha(1+\Omega(1))^i \text{ and } \delta_i \geq \exp(-O(\alpha^{-1})^{\exp(O(i))}).
\end{equation*}
At stage $i$ of our iteration apply Lemma \ref{lem.int} to the set $X_i$ and elements $g_i$ and $h_i$ to get a symmetric neighbourhood of the identity $Y_i$ such that
\begin{equation*}
Y_i^4 \subset (g_iX_i^4g_i^{-1}) \cap (h_iX_i^4h_i^{-1}) \text{ and } \mu_G(Y_i) \geq \exp(-O(\log^22\delta_i^{-1})).
\end{equation*}
Now apply Corollary \ref{cor.cont} to $Y_i$ to get a $2$-step multiplicative system $\mathcal{B}^{(i)'}$ and a symmetric neighbourhood of the identity $S_i$ such that
\begin{equation*}
B^{(i)'}_0 \subset Y_i^4, S_i^4 \subset B^{(i)'}_3 \text{ and } \mu_G(S_i)\geq \exp(-O(\epsilon^{-1}\log 2\delta_i^{-1})^{O(1)}).
\end{equation*}
Apply Proposition \ref{prop.u1} to $\mathcal{B}^{(i)}$ and the $1$-step $\epsilon$-closed system
\begin{equation*}
(B^{(i)'}_{0+},B^{(i)'}_0,B^{(i)'}_{0-};B^{(i)'}_{1-}),
\end{equation*}
which have
\begin{equation*}
B^{(i)'}_0 \subset Y_i^4 \subset (g_iX_i^4g_i^{-1}) \cap (h_iX_i^4h_i^{-1})  \subset g_iB^{(i)}_1g_i^{-1} \cap h_iB^{(i)}_1h_i^{-1} 
\end{equation*}
by design, and $A_i \subset g_iB_0^{(i)}h_i^{-1}$ and $S_i^4 \subset B^{(i)'}_2\subset B^{(i)'}_{1-}$.

If we are in the second two cases of Proposition \ref{prop.u1} then we have some $z_i \in g_iB_0^{(i)}h_i^{-1}$ such that either
\begin{align}\label{eqn.caas1}
\mu_{B_{1}^{(i)'}z_i}(A_i) & \geq \mu_{B_{1-}^{(i)'}z_i}(A_i)(1-O(\epsilon))\\ \nonumber & = \mu_{B_{1-}^{(i)'}} \ast 1_{A_i}(z_i)(1-O(\epsilon)) \geq \alpha_i(1+\Omega(\eta^2)) - O(\epsilon\alpha_i^{-1})
\end{align}
or
\begin{align}\label{eqn.caas2}
\mu_{z_iB_{1}^{(i)'}}(A_i) &\geq \mu_{z_iB_{1-}^{(i)'}}(A_i)(1-O(\epsilon))\\ \nonumber & = 1_{A_i}\ast\mu_{B_{1-}^{(i)'}}(z_i)(1-O(\epsilon)) \geq \alpha_i(1+\Omega(\eta^2)) - O(\epsilon\alpha_i^{-1}).
\end{align}
Set
\begin{equation*}
\mathcal{B}^{(i+1)}:=(B_{1+}^{(i)'},B_1^{(i)'},B_{1-}^{(i)'};B_2^{(i)'})
\end{equation*}
and $X_{i+1}:=S_i$ so that we have
\begin{equation*}
X_{i+1}^4 = S_i^4 \subset B_2^{(i)'}=B^{(i+1)}_1 \text{ and } \delta_{i+1} \geq \exp(-O(\epsilon^{-1}\log 2\delta_i^{-1})^{O(1)}).
\end{equation*}
It follows by the inductive hypothesis and the value of $\epsilon$ that
\begin{equation*}
\delta_{i+1} = \exp(-O(\alpha^{-1})^{\exp(O((i+1)))}).
\end{equation*}
If (\ref{eqn.caas1}) holds then take $g_{i+1}=1_G$, $h_{i+1}=z_i^{-1}$, so
\begin{align*}
\alpha_{i+1} & =\mu_{g_{i+1}B_0^{(i+1)}h_{i+1}^{-1}}(A \cap g_{i+1}B_0^{(i+1)}h_{i+1}^{-1})\\ & = \mu_{B_1^{(i)'}z_i}(A) \geq \mu_{B_1^{(i)'}z_i}(A_i)\\ &\geq \alpha_i(1+\Omega(\eta^2)) - O(\epsilon\alpha_i^{-1}) = \alpha_i(1+\Omega(c^2)-O(c'))=\alpha_i(1+\Omega(1))
\end{align*}
provided $c'$ is sufficiently small compared with $c^2$.  On the other hand, if (\ref{eqn.caas2}) holds then take  $g_{i+1}=z_i$ and $h_{i+1}=1_G$, so
\begin{align*}
\alpha_{i+1} & =\mu_{g_{i+1}B_0^{(i+1)}h_{i+1}^{-1}}(A \cap g_{i+1}B_0^{(i+1)}h_{i+1}^{-1})\\ & = \mu_{z_iB_1^{(i)'}}(A) \geq \mu_{z_iB_1^{(i)'}}(A_i)\\&\geq \alpha_i(1+\Omega(\eta^2)) - O(\epsilon\alpha_i^{-1}) = \alpha_i(1+\Omega(c^2)-O(c'))=\alpha_i(1+\Omega(1)),
\end{align*}
again provided $c'$ is sufficiently small compared with $c^2$.  In either case
\begin{equation*}
\alpha_{i+1} \geq \alpha_i(1+\Omega(1)) \geq \alpha(1+\Omega(1))^{i+1}.
\end{equation*}
With the easier cases dealt with, suppose that we are in the first case of Proposition \ref{prop.u1} and there is some $a_i$ such that
\begin{equation*}
\mu_{a_iB_0^{(i)'}}(A_i),\mu_{B_0^{(i)'}a_i}(A_i) \geq \alpha_i(1-\eta)
\end{equation*}
and
\begin{equation*}
\mu_{a_iB_{1-}^{(i)'}a_i}(\{s^2:s \in A_i\}) =\Omega(\alpha_i\mu_G(S_i)^2).
\end{equation*}
Now let
\begin{equation*}
\tilde{U}_i:=(a_i^{-1}A_i)\cap B_{0-}^{(i)'} \text{ and } \tilde{V}_i:=(A_ia_i^{-1})\cap B_{0-}^{(i)'},
\end{equation*}
and note that
\begin{equation*}
\min\left\{\mu_{B_{0}^{(i)'}}(\tilde{U}_i) ,\mu_{B_{0}^{(i)'}}(\tilde{V}_i) \right\} \geq \alpha_i(1-\eta) -O(\epsilon).
\end{equation*}
Thus we can pick $U_i \subset \tilde{U}_i$ and $V_i \subset \tilde{V}_i$ such that
\begin{equation*}
\mu_{B_{0}^{(i)'}}(U_i)=\mu_{B_{0}^{(i)'}}(V_i) =\min\left\{\mu_{B_{0}^{(i)'}}(\tilde{U}_i) ,\mu_{B_{0}^{(i)'}}(\tilde{V}_i) \right\} \geq \alpha_i(1-\eta) -O(\epsilon).
\end{equation*}
Let
\begin{equation*}
W_i:=\{a_i^{-1}s^2a_i^{-1}: s \in A_i\}\cap B_{1-}^{(i)'}.
\end{equation*}
so that
\begin{equation*}
\mu_{B_{1-}^{(i)'}}(W_i) =\Omega(\alpha_i \mu_G(S_i)^2) = \Omega(\alpha_i  \exp(-O(\epsilon^{-1}\log 2\delta_i^{-1})^{O(1)})).
\end{equation*}
We apply Proposition \ref{prop.u2} to the system $\mathcal{B}^{(i)'}$, the symmetric neighbourhood of the identity $S_i$ which has $S_i^4 \subset B_2^{(i)'}$, the sets $U_i,V_i \subset B_{0-}^{(i)'}$ which have
\begin{equation*}
\mu_{B_{0}^{(i)'}}(U_i)=\mu_{B_{0}^{(i)'}}(V_i) \geq \alpha_i(1-\eta) -O(\epsilon),
\end{equation*}
and the set $W_i \subset B_{1-}^{(i)'}$.  If we are in the first case we terminate our iteration with
\begin{equation}\label{eqn.termination}
\langle 1_{U_i} \ast \mu_{V_i},1_{W_i}\rangle_{L_2\left(\mu_{B_1^{(i)'}}\right)} \geq \frac{1}{2}\mu_{B_0^{(i)'}}(U_i)\mu_{B_0^{(i)'}}(V_i)\mu_{B_1^{(i)'}}(W_i).
\end{equation}
If we are in the second case of Proposition \ref{prop.u2} then let $\mathcal{B}^{(i+1)}$ be the multiplicative system, and $X_{i+1}$ the given symmetric neighbourhood of the identity so that
\begin{align*}
\delta_{i+1} &\geq \exp(-O(\epsilon^{-1}(\alpha_i(1-\eta) -O(\epsilon))^{-1}\log 2\mu_{B_1}(W)^{-1} \log 2\delta_i^{-1})^{O(1)})\\ &=\exp(-O(\epsilon^{-1}\alpha^{-1}\log2\delta_i)^{O(1)})= \exp(-O(\alpha^{-1})^{\exp(O((i+1)))}).
\end{align*}
We are given an $h_i$ such that
\begin{equation*}
\mu_{B^{(i+1)}_0h_i^{-1}}(U_i) \geq \mu_{B_0^{(i)'}}(U_i)(1+\Omega(1))-O(\epsilon\alpha_i^{-1}).
\end{equation*}
It follows that putting $g_i:=a_i$ we have
\begin{eqnarray*}
\alpha_{i+1} \geq \mu_{g_iB^{(i+1)}_0h_i^{-1}}(A_i \cap g_iB^{(i+1)}_0h_i^{-1}) & \geq & \mu_{B_0^{(i)'}}(U)(1+\Omega(1))-O(\epsilon\alpha_i^{-1})\\ & \geq & \alpha_i(1+\Omega(1) - \eta) - O(\epsilon \alpha^{-1})\\
& = & \alpha_i(1+\Omega(1)-c-O(c'))=\alpha_i(1+\Omega(1))
\end{eqnarray*}
provided $c$ and $c'$ are sufficiently small absolute constants.  If we are in the third case of Proposition \ref{prop.u2} then let $\mathcal{B}^{(i+1)}$ be the multiplicative system, and $X_{i+1}$ the given symmetric neighbourhood of the identity so that
\begin{align*}
\delta_{i+1} &\geq \exp(-O(\epsilon^{-1}(\alpha_i(1-\eta) -O(\epsilon))^{-1}\log 2\mu_{B_1}(W)^{-1} \log 2\delta_i^{-1})^{O(1)})\\ &=\exp(-O(\epsilon^{-1}\alpha^{-1}\log2\delta_i)^{O(1)})= \exp(-O(\alpha^{-1})^{\exp(O((i+1)))}).
\end{align*}
We are given an $g_i$ such that
\begin{equation*}
\mu_{g_iB^{(i+1)}_0}(V_i) \geq \mu_{B_0^{(i)'}}(V_i)(1+\Omega(1))-O(\epsilon\alpha_i^{-1}).
\end{equation*}
It follows that putting $h_i:=a_i^{-1}$ we have
\begin{eqnarray*}
\alpha_{i+1} \geq \mu_{g_iB^{(i+1)}_0h_i^{-1}}(A_i \cap g_iB^{(i+1)}_0h_i^{-1}) & \geq & \mu_{B_0^{(i)'}}(V)(1+\Omega(1))-O(\epsilon\alpha_i^{-1})\\ & \geq & \alpha_i(1+\Omega(1) - \eta) - O(\epsilon \alpha^{-1})\\
& = & \alpha_i(1+\Omega(1)-c-O(c'))=\alpha_i(1+\Omega(1)),
\end{eqnarray*}
again provided $c$ and $c'$ are sufficiently small absolute constants.  In either of the above cases
\begin{equation*}
\alpha_{i+1}\geq(1+\Omega(1))\alpha_i \geq \alpha(1+\Omega(1))^{i+1}.
\end{equation*}
Since $\alpha_i \leq 1$ for all $i$ it follows that the iteration terminates at some step $i_0$ with
\begin{equation*}
1 \geq \alpha(1+\Omega(1))^{i_0},
\end{equation*}
\emph{i.e.} at some stage $i_0=O(\log 2\alpha^{-1})$.  It follows from this that
\begin{equation*}
\delta_{i_0} \geq \exp(-\exp(\alpha^{-O(1)})).
\end{equation*}
When we terminate the iteration (\ref{eqn.termination}) holds and so there are at least
\begin{align*}
|B_0^{(i)'}||B_1^{(i)'}|\cdot  \frac{1}{2}\mu_{B_0^{(i)'}}(U_i)\mu_{B_0^{(i)'}}(V_i)\mu_{B_1^{(i)'}}(W_i) & = \Omega(\alpha_i^3\mu_G(S_i)^4)|G|^2\\ & = \Omega(\alpha^3\exp(-O(\alpha^{-1}\log 2\delta_{i_0}^{-1})^{O(1)}))|G|^2
\end{align*}
triples $(r,s,t) \in U_i \times W_i\times V_i$ such that $rt=s$.  The mapping taking $(r,s,t)$ to $(a_ir,a_isa_i,ta_i)$ is an injection into
\begin{equation*}
a_iU_i \times a_iW_ia_i \times V_ia_i \subset A \times \{a^2:a \in A\} \times A,
\end{equation*}
and $(a_ir)(ta_i)=a_i(rt)a_i=a_isa_i$.  It follows that there are at least
\begin{equation*}
\Omega(\alpha^3\exp(-O(\alpha^{-1}\log 2\delta_{i_0}^{-1})^{O(1)}))|G|^2 = \exp(-\exp(\alpha^{-O(1)}))|G|^2
\end{equation*}
triples in $(x,y,z) \in A \times A \times A$ with $xz=y^2$ as claimed.
\end{proof}

\section*{Acknowledgement}

We should like to thank the referee for a careful reading of the paper which identified numerous errors.

\bibliographystyle{halpha}

\bibliography{references}

\end{document}